\newtheorem{theorem}{Theorem}[section]
\newtheorem{lemma}[theorem]{Lemma}
\newtheorem{proposition}[theorem]{Proposition}
\newtheorem{corollary}[theorem]{Corollary}
\theoremstyle{definition}
\newtheorem{definition}[theorem]{Definition}
\theoremstyle{remark}
\newtheorem{remark}[theorem]{Remark}
\numberwithin{equation}{section}
  \newcommand{\cA}{{\mathcal A}}
     \newcommand{\cB}{{\mathcal B}}
  \newcommand{\cE}{{\mathcal E}}
  \newcommand{\cM}{{\mathcal M}}
    \newcommand{\cN}{{\mathcal N}}
  \renewcommand{\cL}{{\mathcal L}}
  \renewcommand{\cD}{{\mathcal D}}
  \newcommand{\cC}{{\mathcal C}}
  \newcommand{\cG}{{\mathcal G }}
    \newcommand{\cV}{{\mathcal  V}}
    \newcommand{\cU}{{\mathcal U}}
       \newcommand{\cX}{{\mathcal  X}}
     \newcommand{\cS}{{\mathcal  S}}
   \newcommand{\ba}{\begin{eqnarray}}
   \newcommand{\na}{\end{eqnarray}}
   \newcommand{\ban}{\begin{eqnarray*}}
   \newcommand{\nan}{\end{eqnarray*}}
    \newcommand{\ind}{{\bf  Index }}
    \newcommand{\Vol}{{\text {Vol}}}  
\renewcommand{\vert}{{\text {vert}}}   
\newcommand{\Hess}{{\text {Hess}}}
\newcommand{\Crit}{{\text {Crit}}}
\newcommand{\Cr}{{\text {Crit}}}
\newcommand{\dist}{{\text {dist}}}
\newcommand{\loc}{{\text {loc}}}
\def \mbf{\mathbf}
\def \mc{\mathcal}
  \newcommand{\C}{{\mathbb C}}
  \newcommand{\R}{{\mathbb R}}
  \newcommand{\Z}{{\mathbb Z}}
    \newcommand{\N}{{\mathbb N}}
  \newcommand{\D}{{\mathbb D}}
    \renewcommand{\P}{{\mathbb P}}
   \newcommand{\p}{\partial}
  \newcommand{\fg}{{\mathfrak g}}
 \newcommand{\fj}{{\mathfrak j}}
 \newcommand{\<}{\langle}
  \renewcommand{\>}{\rangle}
  \newcommand{\disp}{\displaystyle}
\begin{document}

\title{$L^2$-Moduli spaces of symplectic vortices on Riemann surfaces with cylindrical ends }

  \author{Bohui Chen}  
  \address{School of mathematics\\
  Sichuan University\\
 Chengdu, China}
  \email{bohui@cs.wisc.edu}  

   \author{Bai-Ling Wang}
  \address{Department of Mathematics\\
  Australian National University\\
  Canberra ACT 0200 \\
  Australia}
  \email{bai-ling.wang@anu.edu.au}

\subjclass{}
\date{}



   \begin{abstract}Let $(X,\omega)$ be a compact 
 symplectic manifold with  a Hamiltonian action of a compact Lie group $G$ and $\mu: X\to \mathfrak g$ be its moment map.  In this paper, we  study the $L^2$-moduli spaces of symplectic vortices on Riemann surfaces with cylindrical ends. We studied  a circle-valued  action functional whose gradient flow equation  corresponds to  the  symplectic vortex equations on a cylinder $S^1\times \R$.   Assume that $0$ is a regular value of the moment map $\mu$, we show that the functional is of Bott-Morse type and its critical points of the functional  form  twisted sectors  of  the symplectic reduction 
   (the symplecitc orbifold $[\mu^{-1}(0)/G]$). 
   We show that any  gradient flow lines approaches its limit point exponentially fast.   Fredholm theory and compactness  property   are then established  for the $L^2$-Moduli spaces of symplectic vortices on Riemann surfaces with cylindrical ends. 
     \end{abstract}

   \maketitle
  \tableofcontents

\newpage

\section{Introduction and statements of main theorems}
  
 The symplectic vortex equations on a  Riemann surface  $\Sigma$  associated a principal $G$-bundle $P$ and 
 a Hamiltonian $G$-space $(X, \omega)$,  originally discovered by 
 by K. Cieliebak, A. R. Gaio, and D. A. Salamon \cite{CGS}, and independently by I. Mundet i Riera \cite{MR}, is a 
 system of first order partial differential equations
  \ba\label{eq:1}
  \left\{\begin{array}{l}
\bar{\partial}_{J, A} (u) =0\\
*_\Sigma F_A +  \mu(  u )  = 0
\end{array}\right.
 \na
  for a connection $A$ on $P$ and a $G$-equivariant map $u: P\to X$. See Section \ref{2} for an explanation of the
  notations involved. 
 They are  natural generalisations of  the $J$-holomorphic equation  in a symplectic manifold for  $G$ is trivial, and   of  the  well-known Ginzburg-Landau vortices  in a mathematical model of superconductors for  $\Sigma = \C$  and $X=\C^n$ as a Hamiltonian
 $U(1)$-space.  Ginzburg-Landau vortices have been studied both from mathematicians and physicists' viewpoints. They are  two-dimensional solitons, as time-independent solutions  with finite energy to certain  classical field equations  in Abelian Higgs model, see \cite{JaT} for a complete account of Ginzburg-Landau vortices.

 Since the inception  of these  symplectic vortices, there have been steady   developments in the study of the 
 moduli spaces of symplectic vortices and their associated invariants, the so-called Hamitonian Gromov-Witten invariants.  Many  fascinating conjectures have been proposed, for example see  \cite{CGS},  \cite{GaS} and \cite{Zil}.

  As in Gromov-Witten theory, there are several  main technical issues in the definition of invariants from  symplectic vortices: compactification, gluing theory  and regularization for the moduli spaces of  symplectic vortices.   There have been many works focused  on the compactification issue(\cite{CGRS},\cite{MR},\cite{MT},\cite{Zil},\cite{Ott}). 
On the one hand, when $\Sigma$ is closed,  $X$  is symplectically aspherical and  satisfies some convexity  condition,   A. R. Gaio, I. Mundet i Riera  and D. A. Salamon  in \cite{CGRS}  proved compactness of the  moduli space of symplectic vortices with compact support and bounded energy.  On the other hand, when $G= U(1)$  and $X$ is closed, with strong monotone conditions, I. Mundet i Riera   in \cite{MR}  compactified    the  moduli space of  bounded  energy  symplectic vortices  over a fixed closed Riemann surface.  When $G=U(1)$ and $X$ is a general  compact symplectic manifold,  I. Mundet i Riera and G. Tian conpactified 
 the  moduli space of symplectic vortices with  bounded energy   over smooth curves degenerating to nodal curves.    In particular, the  bubbling off   phenomena near nodal points are new. Energy may be lost and
there are gradient flows of the moment map  instead. This is
  not present in the usual Gromov-Witten theory,  and was elegantly  and carefully   presented in \cite{MT}.  Also, there are some studies on special models such as on the  affine vortices (\cite{Zil}).  
 Based on their compactification, Mundet i Riera and Tian  have a long project on defining Hamiltonian GW
   invariants and almost finished(\cite{MT_HGW}). On the other hand,
  Woodward , following Mundet i Riera's approach(\cite{Mundet}), gave an algebraic geometry approach to define gauged Gromov-Witten invariant(\cite{Woo}), and show its relation to Gromov-Witten invariants of $X\sslash G$ via quantum Kirwan morphisms(\cite{Woo}).

In this paper, we  study the moduli spaces of symplectic vortices on a Riemann surface with cylindrical ends.
In particular,  for a  genus $g$ Riemann surface  with $n$-marked points, we will study  the  $L^2$-moduli space of symplectic vortices  on a Riemann surface $\Sigma$ with a cylindrical end metric
near each marked points. Here  the energy of $(A, u)$, defined to be  the
 Yang-Mills-Higgs  energy functional
\ba\label{YMH:energy0}
E(A, u) = \int_\Sigma \dfrac 12 (|d_Au|^2 + |F_A|^2 + |\mu\circ u |^2) \nu_\Sigma,
\na
is finite. It turns out that the Hamiltonian GW type invariants are very sensitive to the volume forms used near
punctured points. For example, readers may be refer to \S\ref{outlook} for further discussion. 

In Section 2, we briefly review the moduli spaces of symplectic vortices on a  closed Riemann surface as 
developed in   \cite{CGS},     \cite{CGRS}  and \cite{MT}.
  In Section 3, we investigate the asymptotic behaviour of symplectic vortices on a half  cylinder $S^1\times \R^{\geq 0}$ with finite energy. For this we adapt the action functional in \cite{Fr1} and \cite{Fr2} to get a circle-valued functional whose $L^2$-gradient flow equation realizes the symplectic vortex equations (\ref{eq:1}) on $S^1\times \R^{\geq 0}$ in temporal gauge. The critical point set of this functional, modulo gauge transformations  can be identified with 
 \[
  \left( \bigsqcup_{g\in G} \big(\mu^{-1}(0)\big)^g  \right)/G, 
 \]
 as a topological space, where the action of $G$ is given by $h\cdot (x, g) = (h\cdot x, hgh^{-1})$. 
 Under the assumption that $0$ is a regular value of  the moment map $\mu$,  so
  the symplectic reduced space  is   a symplectic  orbifold 
  \[
   \cX_0 =[\mu^{-1}(0)/G].
   \]
  Here we use the square bracket to denote the orbifold structure arising the  locally free action of $G$ on $\mu^{-1}(0)$.     Then the critical point set is 
 diffeomorphic to the inertia orbifold of  the  symplectic  orbifold  $\cX_0$, 
 denoted by
 \[
 I\cX_0 = \bigsqcup_{(g)}  \cX_0^{(g)}
 \]
 where $(g)$ runs over the conjugacy class in $G$ with   non-empty fixed points in $\mu^{-1}(0)$. Note that for a non-trivial conjugacy class $(g)$, $\cX_0^{(g)}$ is often called a twisted sector of $\cX_0$, which is
 diffeomorphic to the orbifold arising from the action of $C(g)$ on $ \mu^{-1}(0)^{(g)}$. Here $C(g)$ denotes the centralizer of $g$ in $G$ for a representative $g$ in the conjugacy class $(g)$.

 Throughout this paper, we assume that $0$ is a regular value of  the moment map $\mu$.  Then we show that  this circle-valued  functional is actually of Bott-Morse type. 
 We also establish  an crucial inequality (Proposition \ref{cru:inequ2})  near each critical point. This  inequality 
 enables us to  establish an exponential decay result  for a symplectic vortex  on   $S^1\times \mathbb R^{\geq 0}$ with finite energy, Cf.  Theorem \ref{decay:exp}.
 
 In Section 4, we study the $L^2$-moduli space  $\cN_\Sigma (X, P)$ of symplectic vortices  on a Riemann surface  $\Sigma$  with  $k$-cylindrical ends,  associated to a principal $G$-bundle and a closed Hamiltonian $G$-manifold $(X, \omega)$.   Applying  the asymptotic analysis in Section 3 to the cylindrical end,
we get a continuous  asymptotic limit map (Proposition \ref{asymp:map} and Subsection 4.2)
  \[
    \p_\infty:   \cN_{\Sigma}(X, P) \longrightarrow (\Cr)^k \cong  (I\cX_0)^k.
    \]
The Yang-Mills-Higgs  energy functional takes discrete values on $ \cN_{\Sigma}(X, P)$  depending on  homology classes  in $ H_2^G(X, \Z)$. 

 Fix a homology class $B\in H_2^G(X, \Z)$, denote by  $\cN_{\Sigma } (X, P, B)$  the $L^2$-moduli  space of  symplectic vortices  on a Riemann surface  $\Sigma$   with the topological type defined by $B$.  Then we develop the Fredholm theory  for 
 $\cN_{\Sigma } (X, P, B)$ and  calculate the expected dimension of the $L^2$-moduli  space of  symplectic vortices with prescribed 
 asymptotic behaviours. 

\vspace{2mm}

\noindent{\bf Theorem A}    (Theorem \ref{APS:index-2}) {\em  Let $ \cN_{\Sigma} (X, P, B; \{(g_i)\}_{i=1, \cdots, k})$ be the subset of
   $\cN_{\Sigma} (X, P, B)$ consisting of 
symplectic vortices  $[(A, u)] $ such that 
  \[
  \p_\infty  (A, u) \in  \big( \cX_0^{(g_1)}  \times \cdots \times\cX_0^{(g_k)}  \big)  \subset (\Cr)^k
     \]
Then   $\cN_{\Sigma} (X, P, B; \{(g_i)\}_{i=1, \cdots, k})$ admits an orbifold  Fredhom system with its virtual dimension given by
\[
2\<c_1^G(TX), B\> + 2(n-\dim G) (1-g_{\Sigma}) - 2\sum_{i=1}^k  \iota_{CR}( \cX_0^{(g_i)},  \cX_0)
\]
where $g_{\Sigma}$ is the genus of the Riemann surface $\Sigma$. Here $\iota_{CR}(\cX_0^{(g_i)},  \cX_0)$ is the degree shift as introduced in \cite{CR}. 
  }

\vspace{2mm}

In Section 5, we also establish the compactness  property  for these  $L^2$-moduli  spaces  of  symplectic vortices   on $\Sigma$ with prescribed  asymptotic behaviours.  We show that there are  two  types of  limiting vortices appearing in the compactification.  The first type occurs as   the  bubbling phenomenon of pseudo-holomorphic spheres at  interior points  just as in the Gromov-Witten theory. To describe this type of  limiting vortices, we introduce the usual weighted trees to classify the resulting topological type.   The second type is due to  the sliding-off  of the Yang-Mills-Higgs energy  along the cylindrical ends as happened in the Floer theory.  The combination of these types of convergence sequences is called the  weak chain convergence in instanton Floer theory in \cite{Don1}.  The choice of cylindrical metric on $\Sigma$ is crucial in our study the compactness property in the sense that these  are only  two  types of  limiting vortices appearing in the compactification of the 
 $L^2$-moduli  spaces  of  symplectic vortices on a cylindrical Riemann surface. 

To describe  the  topological types appearing in the compactification, we introduce a notion of
web of stable weighted trees of the   type $(\Sigma;  B)$    consists of a principal tree $\Gamma_0$  with $k$-tails  and a collections of ordered sequence of trees of finite length 
 \[
 \Gamma_i = \bigsqcup_{j=1}^m T_i(j)
 \]
 for each tail $i=1, \cdots, k$.  See Definition \ref{web:tree}  for a  precise definition.
 Let $\cS_{\Sigma;   B}$ be the set of  webs of stable weighted trees of the   type $(\Sigma;   B)$, which is a partially ordered finite set.  
 For each $\Gamma \in \cS_{\Sigma;   B}$, we associate    an $L^2$-moduli   space     $ \cN_\Gamma$   of   symplectic vortices of type $\Gamma$.  Let  $ \cN_\Gamma  ((g_1), \cdots, (g_k)) $ be the corresponding $L^2$-moduli   space  of   symplectic vortices of type $\Gamma$   with prescribed  asymptotic data in 
 \[
 \cX_0^{(g_1)} \times \cdots \times \cX_0^{(g_k)} \subset (\Cr)^k.
 \]
    Then the main theorem of this paper is to show that the  coarse $L^2$-moduli  space of  symplectic vortices   on $\Sigma$  can be compactified into a stratified topological  space whose strata are labelled by a 
   web of stable weighted trees in $\cS_{\Sigma;  B}$.    In the following theorem, we use the notation $|\cN|$ to denote
    the coarse space of an orbifold topological space $\cN$. 
   
\vspace{2mm}

\noindent{\bf Theorem B}    (Theorem \ref{thm:cpt}) {\em 
Let $\Sigma$ be a Riemann surface of genus $g$  with $k$-cylindrical ends. The coarse space  $L^2$-moduli space 
$|\cN_{\Sigma}(X, P, B)| $ can be compactified to a   stratified  topological space 
\[
| \overline{\cN}_{\Sigma}(X, P, B) | = \bigsqcup_{\Gamma \in \cS_{\Sigma;  B}} |\cN_\Gamma |
\]
such that the top stratum is $|\cN_{\Sigma}(X, P, B)|$. 
Moreover, the  coarse moduli space $$ |\cN_{\Sigma} (X, P, B; \{(g_i)\}_{i=1, \cdots, k})|$$ with a specified asymptotic datum 
can be  compactified to a   stratified topological  space 
\[
|\overline{\cN}_{\Sigma}(X, P, B; \{(g_i)\}_{i=1, \cdots, k}) | = \bigsqcup_{\Gamma \in \cS_{\Sigma;  B}} | \cN_\Gamma ( (g_1), \cdots, (g_k))|.
\]}
    \begin{remark}
Note that the evaluation map has its image in $I\mc X_0$, hence our invariants will define on $H^\ast_{CR}(\mc X_0)$. This is different from  the Hamiltonian Gromov-Witten invariants defined earlier, as the invariants are defined on $H^\ast_G(X)$ in \cite{CGRS} and \cite{Woo} .   Hence the invariants we will define is essentially different from the usual HGW invariants. One may refer to \S\ref{outlook} for further discussion. 
\end{remark}
\vspace{2mm}

We remark that  the compactness properties of the moduli spaces of symplectic vortices have been studied earlier in \cite{MT},   \cite{Ott}, \cite{Zil}, \cite{Zil1} and \cite{SV}.  Under the assumption that  $X$ is a K\"ahler Hamiltonian $G$-manifold with semi-free action, the above  compactness theorem has also been obtained by Venugopalan  in \cite{SV} using  a different approach.

\section{Review of symplectic vortices}\label{2} 

In  this section, we review some of basic facts for the symplectic vortices  following 
\cite{CGRS} \cite{MR} and \cite{MT}.  

\subsection{Symplectic vortex equations} \

Let $(X, \omega)$ be  a $2n$-dimensional  symplectic manifold with a compatible almost complex structure $J$ and a Hamiltonian action of a connected compact Lie group $G$
\[
G \times X \longrightarrow X,  \qquad  (g, x) \mapsto gx.
\]
Let $\fg$ be the Lie algebra of $G$ with a $G$-invariant  inner  product $\< \cdot, \cdot\>$.  Recall that an
action of $G$ on $M$ is Hamiltonian if there exists an  equivariant map, called the moment map, 
 
\[
\mu: X\longrightarrow \fg
\]
 satisfying  the defining property  
 \[
 d\mu_\xi  =   \tilde \xi   \lrcorner  \  \omega = \omega(\tilde \xi, \cdot) , \qquad \text{ for any  } \xi \in \fg.
 \]
   Here the function $\mu_\xi$   is given  by   $\mu_\xi(x) = \< \mu(x), \xi\>$,  and 
 $\tilde\xi$ is the vector field on $X$  defined  by the  infinitesimal action 
 of $\xi \in \fg$ on $X$ 
 \[
 (\tilde \xi  f) (x) = \dfrac{d}{d t} f\left (\exp(-t \xi) x\right), \qquad \text{for} \quad  f  \in C^\infty (X),
 \]
 and the symbol $ \lrcorner$  denotes contraction between differential forms and vector fields.
 Note that the moment map is unique up to a shift by an element $\tau \in Z (\fg)$ (the centre Lie subalgebra of $\fg$).
  See Chapter 2 in \cite{GS} for detailed a discussion on the geometry of moment maps.

  Let $P \to \Sigma$ be smooth (principal) $G$-bundle over  a Riemann surface  $(\Sigma, \fj_\Sigma)$ (not necessarily compact and possibly with boundary).    Let  $   g_{\Sigma} $  be  a Riemannian metric   on $\Sigma$ 
  and  $(*_\Sigma, \nu_\Sigma)$ be  the associated  Hodge star operator and  volume form. 
   Denote by $C^\infty_G(P,  X)$ be the space of smooth $G$-equivariant maps $u: P\to X$ and  by $\cA(P)$ the space of connections on $P$ which is an affine space modelled $\Omega^1(\Sigma,  P^{ad})$.  Here $P^{ad} = P\times_G \fg$ is the bundle of Lie algebras
 associated to the   adjoint representation $ad: G \to GL(\fg)$.  
 
 Denote  the associated fiber bundle of $P$ by  \[
\pi:  Y= P \times_G X \longrightarrow 
\Sigma
\]
 the symplectic fiber bundle. Then  a smooth $G$-equivariant map $u:P\to X$ 
yields a section $\tilde u: \Sigma\to Y$. 
Note that any  connection $A$  on $P$ induces  splittings 
 \[
 TP\cong \pi^\ast T\Sigma\oplus T^{\vert} P,\;\;\;
 TY \cong \pi^*T\Sigma \oplus T^{\vert} Y. 
 \]
The
 covariant derivative $d_A\tilde u\in \Omega^1(\Sigma, \tilde u^\ast T^{\vert}Y)$ is derived   from  $du$as follows:
 $$
 d_Au: \pi^\ast T\Sigma\xrightarrow{du} TY\xrightarrow{projection} T^{\vert}Y.
 $$
For simplicity, we denote $d_A\tilde u$ by $d_Au$ as well.

The {\bf  symplectic vortex equations}  on $\Sigma$  are the following first order partial differential  equations for pairs  $( A, u) \in \cA(P) \times C^\infty_G(P,  X)$
\ba\label{sym:vortex}
\left\{\begin{array}{l}
\bar{\partial}_{J, A} (u) =0\\
*_\Sigma F_A +  \mu(  u )  = 0
\end{array}\right.
\na
where     $F_A$  is the curvature of   the connection $A$.  The  almost   complex  structures  $ j_\Sigma$ and $  J $ define an almost complex structure $J_A$  on $Y$.  
The first equation in (\ref{sym:vortex}) implies that $\tilde u$ is a  $J_A$-hololomorphic section.   In term of the 
covariant derivative $d_A u \in \Omega^1(\Sigma, u^* T^\vert Y)$, it is given by
\ba\label{J:curve}
\bar{\partial}_{J, A} (u) =\dfrac 12 \left( d_A u + J \circ d_A u \circ j_\Sigma\right) =0
\na
in $\Omega^{0, 1} (\Sigma, u^* T^\vert Y)$. 
For the second equation in (\ref{sym:vortex}), we remark that  $\mu\circ u$ is a section of $P^{ad} $  and the Hodge star operator defines a map
\[
*_\Sigma :  \Omega^2(\Sigma, P^{ad}) \longrightarrow \Omega^0(\Sigma, P^{ad}). 
 \]
 Using the Riemannian volume $\nu_\Sigma$,  the second equation in (\ref{sym:vortex}) can be written as
 \ba\label{curv:eq}
 F_A +   \mu ( u ) \nu_\Sigma =0.
 \na

 A solution $(A, u)$  to (\ref{sym:vortex}) is called a 
 symplectic vortex on $\Sigma$ associated to a principal $G$-bundle $P$ and a Hamiltonian $G$-space $X$. 
  Two elements $w = (P, A, u) $ and $w' = (P', A', u')$ are called  equivalent iff there is a bundle 
isomorphism
\[
\Phi:  P' \to P
\]
such that
\[
\Phi^*(A, u) = (\Phi^* A, u \circ \Phi) = (A', u').
\]
When $P$ is evident in the context, we will omit $P$ from the notation and simply call  $(A, u)$  for a   symplectic vortex on $\Sigma$.  As  the   symplectic vortex equations (\ref{sym:vortex})  on $\Sigma$ for a fixed $P$ is invariant under the action of  gauge group
$\cG  (P) = Aut(P)$, the moduli space of  symplectic vortices on $\Sigma$ is the set of solutions to (\ref{sym:vortex}) modulo the gauge transformations. We remark that $P$ is an essential part of symplectic vortices,  in particularly in the study of the compactfications of the moduli spaces of vortices.

Given $u: P\to  X$, there is   an equivariant  classifying map $P\to EG$. Together with   $u: P\to X$ , they define   to a continuous
map
\[
u_G: \Sigma \to  X_G :  = EG\times_G X, 
\]
which in turn  determines a degree $2$  equivariant homology class $[u_G]$  in $H_2^G (X, \Z)$ (if $\Sigma$ is closed) .
 Denote by $\widetilde \cM_{\Sigma} (X, B)$ the space of symplectic vortices  on $\Sigma$ associated $(P, X)$ with
a fixed  equivariant homology class in $B \in H_2^G (X, \Z)$, that means,
\[
\widetilde \cM_{\Sigma} (X, B) =\{(A, u)|  [u_G] = B, (A, u) \text{ satisfies the equations (\ref{sym:vortex})} \}.
\]
  The quotient  of $ \widetilde \cM_{\Sigma} (X, B)$ under  the gauge group $\cG(P)$-action  
\[
\cM_\Sigma (X, B) = \widetilde \cM_{\Sigma} (X, B)/  \cG (P)
\]
is called the  moduli space of  symplectic vortices with a fixed homology class  $B$.

A  solution to (\ref{sym:vortex})  with a fixed $B\in  H^G_2(X, \Z)$  is an absolute minimizer (hence, a critical point)  of the
 Yang-Mills-Higgs  energy functional
\ba\label{YMH:energy}
E(A, u) = \int_\Sigma \dfrac 12 (|d_Au|^2 + |F_A|^2 + |\mu\circ u |^2) \nu_\Sigma.
\na
This is due to the  fact  (Proposition 3.1 in \cite{CGS}) that for any  $(A, u) \in \cA(P) \times C^\infty_G(P, X)$, 
 \begin{equation}\label{energyidentity}
 E(A, u) =  \int_\Sigma  \left( |\bar{\partial}_{J, A} (u)|^2 +\dfrac 12 
|*_\Sigma F_A +  \mu(  u )  |^2\right) \nu_\Sigma +\int_{\Sigma} u^\ast\omega-d\<\mu(u),A\>.
 \end{equation}
Here, $u^\ast\omega-d\<\mu(u),A\>$ is a horizontal  and  $G$-equivaraint 2-form on $P$ and  descends to  a 2-form
$\Sigma$, denoted by the same notation. On the other hand,
 $[\omega -\mu] \in  H^2_G(X)$ is the equivariant cohomology class defined by the equivariant  closed 
 2-form $\omega -\mu \in \Omega^2_G(X)$. The pairing $ \<[\omega -\mu], [u_G]\>$ is computed by
 \[
  \<[\omega -\mu], [u_G]\>  = \int_\Sigma \big( (d_A u)^*\omega - \< \mu(u), F_A\> \big)
  =\int_{\Sigma} u^\ast\omega-d\<\mu(u),A\>.
  \]
  Here $d_A u$ is a  horizontal  and  $G$-equivaraint one-form  on $P$ with values in
  $u^*TX$  and  descends  from  a $u^*T^\vert Y$-valued one form on $\Sigma$,   see  Proposition 3.1 in \cite{CGS}.  
 The   Yang-Mills-Higgs energy functional (\ref{YMH:energy})  and the identity  play   vital roles  in the study of  the  moduli space $\cM_\Sigma (X, B)$.
 \begin{remark}
  We remark that \eqref{energyidentity}
 is true for any surface $\Sigma$. In particular, when $(A,u)$ is a symplectic vortex on 
 $\Sigma$, 
 \begin{equation}\label{energyidentity2}
 E(A, u) =\int_{\Sigma} u^\ast\omega-d\<\mu(u),A\>.
 \end{equation}
 This is the crucial identity for us to define the action functional $\mc L$ in Section \ref{3}.
 \end{remark}
   
 \begin{remark} \begin{enumerate}
\item If $G = U(1)$ the unit  circle in $\C$ and $X =\C^n$ with the usual action of $U(1)$by multiplication, then  symplectic vortex equation is a generalisation of the well-studied vortex equations (Cf. \cite{JaT}). In particular, when $\Sigma$ is compact and $X =\C$,   Bradlow and Garcia-Prada showed that 
the moduli space of  vortices on $\Sigma$  with vortex number 
\[
N = \< c_1(P\times_{U(1)}\C), [\Sigma]\>
\]
 is empty if $N>\Vol(\Sigma)/4\pi$, and is 
$
Sym^N (\Sigma),
$ the $N$-th symmetric product of $\Sigma$, if $N>\Vol(\Sigma)/4\pi$. 
\item As observed in \cite{CGS}, the space $\cA (P) \times C^\infty_G(P, X)$ is an infinite dimensional
Fr\'echet manifold with a natural symplectic structure. The action of gauge group $\cG(P)$ is Hamiltonian 
with a moment map
\[
\cA (P) \times C^\infty_G(P, X) \to C^\infty_G(\Sigma , P^{ad})
\]
defined by $(A, u)\mapsto *F_A + \mu(u)$. Hence, the moduli space of symplectic vortices can be thought 
as a symplectic quotient if the space 
\[
\cS=\{(A, u)|  \bar{\partial}_{J, A} (u) =0 \}
\]
is a symplectic submanifold of $\cA (P) \times C^\infty_G(P, X)$. In practice, the space $\cS$ is not a smooth submanifold in general. It still provides a good guiding principle for the development of Hamiltonian Gromov-Witten
theory. See \cite{AB} \cite{BDW} for some applications of this principle in similar contexts. 
\end{enumerate}
 \end{remark} 
 
 When $\Sigma= S^1\times \R$ with the flat metric $(dt)^2 + (d\theta)^2$  and the standard complex structure $\fj(\partial_t) = \partial_\theta$, with respect to a fixed trivialisation of $P$, we can use the temporal gauge 
 \[
 A = d + \xi (\theta, t) d\theta, \qquad \text{for } \xi:   S^1\times \R \to \fg, 
 \]
 to write the  symplectic vortex equations (\ref{sym:vortex}) as 
\ba\label{vortex:cyl}
\left\{\begin{array}{l}
\dfrac{\p u}{\p t} + J \left( \dfrac{\p u}{\p \theta } + \widetilde{\xi } (\theta, t)  (u(x)) \right) = 0\\
 \dfrac{\p \xi}{\p t} +  \mu(  u )  = 0.
\end{array}\right.
\na 
 This is the downward gradient flow equation for a particular  function on $C^\infty (S^1, X \times \fg)$ defined in Section  \ref{3},  where we will  study this function in  more details.

  \subsection{Moduli spaces of symplectic vortices on a  Riemann surface}\
  
  In the study of the moduli space $\cM_{\Sigma}(X, B)$, we need to develop certain Fredholm theory. This requires 
  some Sobolev completion of the space 
  \[
\widetilde   \cB = \cA(P)\times C_{G, B}^\infty(P, X)
  \]
  where $C_{G, B}^\infty(P, X) =\{ u\in C_G^\infty(P, X)  | [u_G] = B\}$.  The  Sobolev embedding theorem in
  dimension 2 leads to the  $W^{1, p}$ Sobolev spaces for $p>2$.  Depending the question at hand regarding Riemann  surface $\Sigma$ being closed, cylindrical or asymptotically Euclidean at infinite, further careful choice of a   suitable Sobolev spaces is needed. Instead, in this subsection, we only review the linearization of the symplectic
  vortex equations and the gauge transformations  on $\widetilde \cB$ with  the Fr\'echet topology as in \cite{CGS}.
  
  Let $\widetilde \cE \to \widetilde   \cB $ be the  $\cG(P)$-equivariant vector bundle whose fiber over $(A, u)$ is given by
  \[
  \widetilde \cE_{(A, u)} = \Omega^{0, 1} (\Sigma, u^*T^\vert Y) \oplus \Omega^0(\Sigma,  P^{ad}).
  \]
  Then the symplectic vortex equations (\ref{sym:vortex}) defines a $\cG(P)$-equivariant section 
  \[
  S(A, u) = \left(\bar{\partial}_{J, A} (u), 
*_\Sigma F_A +  \mu(  u ) \right) 
\]
such that $\widetilde \cM_{\Sigma}(X, B) $ is the zeros of this section.  The vertical differential of this section, 
denoted by $\cD_{A, u}$, 
together with the linearization   $L_{A, u}$ of the gauge transformation  at $(A, u) \in S^{-1}(0)$ give rise to 
the  deformation complex 
\[
\xymatrix{
  \Omega^0(\Sigma, P^{ad}) \ar[r]^{L_{A, u}\qquad \qquad  } & \Omega^1(\Sigma, P^{ad}) \oplus \Omega^0(\Sigma, u^*T^\vert Y) \ar[r]^{\cD_{A, u} }&  \Omega^{0, 1} (\Sigma, u^*T^\vert Y) \oplus \Omega^0(\Sigma, P^{ad})}.
\]
Here $L_{A, u}$ is  given by
$
L_{A, u} (\eta) = (-d_A \eta, \tilde\eta (u)),
$
 and $\cD_{A, u}$
is the  linearization operator  of the   symplectic vortex equations (\ref{sym:vortex}). 

If $\Sigma$ is closed, using   the usual $W^{1, p}$-Sobolev space for $p>2$,  it was shown in \cite{CGRS} that
the operator 
$
\cD_{A, u}  \oplus L_{A, u}^*$ is a Fredholm operator for any $W^{1, p}$-pair  $ (A, u)$ with real index
given by 
\[ 
(n-\dim G) \chi (\Sigma) + 2 \< u^* \left(  c_1(T^\vert Y) \right), [\Sigma]\>.
\]
Equivalently,  by completing  $\widetilde \cE$ and $\widetilde \cB$ under the
$W^{1, p}$ and $L^p$-norms respectively, the triple $(\widetilde \cB,  \widetilde \cE, S)$ defines
a Fredholm system 
\[
(\cB, \cE, S)
\]
after modulo the $W^{2, p}$ gauge transformation group. The  zero set of $S$  is  the moduli space  $\cM_{\Sigma}(X, B)$. 
The central issue in the study of the moduli space   of symplectic vortices is to establish a  virtual fundamental
cycles as in \cite{FO99} or a virtual system as in \cite{CLW3} for a compactified  moduli space $\cM_{\Sigma}(X, B)$.

  \section{Symplectic vortices on a cylinder $S^1\times \R$} \label{3}

  The symplectic vortex equations  (\ref{vortex:cyl}) on $S^1\times \R$ in temporal gauge  suggests that it is a gradient flow equation  for an action  functional on an infinite dimensional space $\cC$. This functional  has been studied in \cite{CGS},  \cite{Fr1} and \cite{Zil1}. After we studied the critical point set and the Hessian of this functional, we 
  establish an inequality (Proposition \ref{cru:inequ2}) for this functional which plays a crucial role in analysing the 
  asymptotic behaviour of an $L^2$ symplectic vortex on $S^1\times [0, \infty)$. 
  This crucial inequality is  applied to show that
a gradient flow line  $\gamma$
with a finite energy condition
\[
E(\gamma) = \int_0^\infty \|\dfrac{\p \gamma (t)}{\p t} \|^2  dt < \infty
\]
has a well-defined limit point, and 
converges exponentially fast to the limit point.   Similar  exponential decay estimates has also been obtained by 
  in  \cite{MT} and \cite{Zil1} using  different methods. 

\subsection{Action functional for  symplectic  vortices} \ 
   Let $P_{S^1}$ be a principal $G$-bundle over $S^1$, and $\cA_{S^1}$ be the space of smooth connections on $P_{S^1}$ which is an affine space over $\Omega^1(S^1, \fg)$.  Since $C^\infty_G(P_{S^1},X)\cong C^\infty(S^1, X_{G})$,  the connected component  of $\cC$ is identified with $\pi_1^G(X)$. For each $c\in \pi_1^G(X)$ we denote the component by
 $\mc C^c$.

   Now
   choosing  a trivialisation $P_{S^1}\to S^1 \times G$  and the standard metric from $S^1 \cong \R/\Z$, we have the identification 
    \[
 \cC=  C^\infty_G(P_{S^1}, X) \times   \cA_{S^1} \cong C^\infty (S^1,  X  \times  \fg).
  \]
  We sometimes  use the same notation to denote both a map in $C^\infty_G(P_{S^1}, X)$  and in $C^\infty (S^1,  X)$ which should be clear in the context.  We remark that the identification 
  of $\cA_{S^1}$ with $C^\infty (S^1,     \fg) $ is with respect to the trivial connection on $P_{S^1}$.

 With respect to the Fr\'echet topology, $\cC$ is a  smooth manifold whose tangent space at $(x, \eta)$ is
  \[
   T_{(x, \eta)} \cC = \Gamma_{C^\infty}(S^1, x^*TX \times \fg),
   \]
   the space of smooth sections of the bundle $x^*TX \times \fg$. 
   Under the identification $\cC =  C^\infty (S^1,  X \times \fg)$,  the full gauge group $LG=C^\infty(S^1, G)$ acts on $\cC$ 
    by
  \ba\label{loop:action}
 g\cdot (x, \eta) = (g x, g^{-1} \dfrac{dg}{d\theta} +  g^{-1} \eta g).
 \na
Here we simply denote by $ g^{-1} \eta g$   the adjoint action $g^{-1}$ on $\eta$.   

  Let $(x_0, \eta_0)$ and $(x_1, \eta_1)$ be in a connected  component of $\cC$ and 
$\gamma$ be  a path 
  \[
   \gamma(t) = (x(t), \eta(t)) :  I = [0, 1] \to \cC 
  \]
  connecting  $ (x_0, \eta_0)$ and $  (x_1, \eta_1)$. Then $\gamma $ determines a
  pair 
  \[
  ( u_\gamma, A_\gamma) \in C_G^\infty (P_{S^1}  \times I, X) \times \cA ( P_{S^1} \times  I ).
  \]
  Define the  energy  functional  for this path $\gamma$ as
  \ba\label{energy:path}
  E (\gamma) = \int_{S^1\times I } \big( (d_{A_\gamma} u_\gamma)^* \omega -     \< \mu(u_\gamma), F_{A_\gamma}\>     \big).
  \na
Note that if the path $\gamma$ satisfies  the symplectic vortex equations (\ref{vortex:cyl}) on $[0, 1]\times S^1$, then 
$E(\gamma)$ agrees with its Yang-Mills-Higgs energy.   Using the coordinate $(\theta, t)$ for $S^1\times I$, we can compute (cf. \eqref{energyidentity2})
  \[
  E (\gamma) = -  \disp{\int_{S^1\times I} (x(t))^*\omega + \int_{S^1} (\<\mu(x_0), \eta_0 \> -  
\<\mu(x_1), \eta_1 \>  )d\theta.}
\]

  \begin{lemma}\label{E_property}
   Under the identification $C_G^\infty (P_{S^1}  \times I, X) \times \cA ( P_{S^1} \times  I ) \cong
  C^\infty ( S^1   \times I, X  \times  \fg)$, the energy function defined in (\ref{energy:path}) enjoys the following properties.
  \begin{enumerate}
  \item For any $g\in LG$, let $g\cdot \gamma$ be the path obtained from the action of $g$, then
\[
 E (\gamma) =  E ( g\cdot \gamma).
 \]
\item If $\gamma_1$ and $\gamma_2$ are homotopic paths relative to  the boundary point
$(x_0, \eta_0)$ and $(x_1, \eta_1)$, then $E (\gamma_1) = E (\gamma_2)$. 
 \end{enumerate} 
  \end{lemma}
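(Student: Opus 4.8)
The crux is to recognize that the integrand defining $E(\gamma)$ is intrinsic. Indeed, for any pair $(A,u)$ over any base, the two pieces $(d_Au)^*\omega$ and $\<\mu(u),F_A\>$ are well-defined $2$-forms on the base: the first is built from the vertical part of $du$ and the fibrewise symplectic form, and the second pairs the $P^{ad}$-section $\mu(u)$ with the $P^{ad}$-valued curvature $F_A$ via the invariant inner product. Both are manifestly gauge-invariant, and their combination is precisely the Chern--Weil image of the equivariant form $\omega-\mu\in\Omega^2_G(X)$ under the connection $A$ and the classifying map $u_G$. The moment map condition $d\mu_\xi=\tilde\xi\lrcorner\,\omega$ is exactly the statement that $\omega-\mu$ is equivariantly closed, which forces the descended form
\[
\alpha(A,u) \;=\; (d_Au)^*\omega-\<\mu(u),F_A\>
\]
to be a \emph{closed} $2$-form on the base for every $(A,u)$ (consistently with the identity \eqref{energyidentity2} and the computation of $\<[\omega-\mu],[u_G]\>$ given above). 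Thus $E(\gamma)=\int_{S^1\times I}\alpha(A_\gamma,u_\gamma)$, where $\alpha$ enjoys two features I will exploit separately: gauge-invariance for (1), closedness for (2).

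For part (1), the point is that the $LG$-action on a path lifts to an honest bundle gauge transformation. Given $g\in LG=C^\infty(S^1,G)$, set $\tilde g(\theta,t)=g(\theta)$, a gauge transformation of $P_{S^1}\times I$ that is constant in the path variable $t$. Unwinding the action formula \eqref{loop:action}, one checks that $(u_{g\cdot\gamma},A_{g\cdot\gamma})=\tilde g^{*}(u_\gamma,A_\gamma)$. Since $\alpha$ is gauge-invariant \emph{pointwise} as a $2$-form on $S^1\times I$, we get $\alpha(u_{g\cdot\gamma},A_{g\cdot\gamma})=\alpha(u_\gamma,A_\gamma)$ and hence $E(g\cdot\gamma)=E(\gamma)$ immediately, with no boundary terms to track. (One can alternatively verify this from the explicit formula for $E(\gamma)$ using $G$-invariance of $\omega$ and of $\<\cdot,\cdot\>$ together with equivariance of $\mu$, but the integrand argument avoids the bookkeeping.)

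For part (2), I would argue by Stokes' theorem on the $3$-manifold $W=S^1\times I\times[0,1]$, using $t\in I$ for the path parameter and $s\in[0,1]$ for the homotopy parameter. A homotopy $\Gamma$ rel endpoints assembles into a single pair $(\mathbf A,\mathbf u)$ over $W$ whose restriction to $S^1\times I\times\{s\}$ recovers $(A_{\gamma_s},u_{\gamma_s})$, so that $E(\gamma_s)=\int_{S^1\times I\times\{s\}}\alpha(\mathbf A,\mathbf u)$. Because $\alpha$ is closed on $W$, Stokes gives $0=\int_W d\alpha=\int_{\partial W}\alpha$. The faces $s=1$ and $s=0$ contribute $E(\gamma_2)-E(\gamma_1)$; the faces $t=0$ and $t=1$ carry the fixed endpoints $(x_0,\eta_0)$ and $(x_1,\eta_1)$, so the map is independent of $s$ there, the pullback of $\alpha$ has no $d\theta\wedge ds$ component, and these contributions vanish; the $S^1$-direction has no boundary. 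Hence $E(\gamma_1)=E(\gamma_2)$. The main obstacle in writing this out carefully is two-fold: first, nailing the equivariant closedness of $\omega-\mu$ so that $\alpha$ is genuinely a closed base form for an arbitrary pair (this is the one place the moment map axiom is used), and second, a clean treatment of orientations and of the vanishing of the endpoint faces in the Stokes computation; both are routine once the intrinsic nature of $\alpha$ is established.
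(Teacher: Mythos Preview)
Your argument is correct. For part (1) you and the paper agree: the integrand is gauge-invariant, and the paper simply declares this ``obvious.'' For part (2), however, you take a genuinely different route. The paper concatenates $\gamma_1$ with $-\gamma_2$ to obtain a pair $(u,A)$ on the closed torus $S^1\times S^1$, observes that $E(\gamma_1)-E(\gamma_2)=\langle[\omega-\mu],[u_G]\rangle$ for the resulting class $[u_G]\in H_2^G(X,\Z)$, and then notes that the relative homotopy forces $[u_G]=0$. You instead work directly on the $3$-manifold $W=S^1\times I\times[0,1]$ carrying the homotopy, invoke the closedness of the Chern--Weil form $\alpha$, and apply Stokes, checking that the endpoint faces contribute nothing because the data there are $s$-independent. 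Your approach is more self-contained: it avoids the gluing (and the attendant smoothness caveat the paper mentions later), and it does not require interpreting the difference as a pairing with an equivariant homology class. The paper's approach, on the other hand, immediately isolates the obstruction as living in $H_2^G(X,\Z)$, which is exactly the structure used a few lines later to define the circle-valued functional $\cL$ and its period lattice $\Z N_{[\omega-\mu]}$; in that sense the paper's proof is doing double duty as a warm-up for what follows.
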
  
 \begin{proof}
(1) is obvious. We explain (2). The path $\gamma_1\sharp (-\gamma_2)$ defines
a pair $(u,A)$ on a bundle $P$ over $S^1\times S^1$,  then
$$
E(\gamma_1)-E(\gamma_2)=\<[\omega-\mu], [u_G]\>.
$$
Since $\gamma_1\sim \gamma_2$, $[u_G]=0$. Hence $E(\gamma_1)=E(\gamma_2)$.
 \end{proof}
   We now  define a (circle-valued) function 
  on $\cC$ as follows. For each component $\mc C^c$ we
  fix a based point $(x_c,\eta_c)$.  Given a point 
  $(x, \eta) \in  \cC^c$, let $\gamma:[0,1]\to \mc C^c$ be a path connecting 
  $(x_c,\eta_c)$ and $(x,\eta)$. As above, this can be written as a
  pair  
  \[
  (\tilde x, \tilde \eta) \in  C_G^\infty (P_\Sigma, X) \times \cA_{\Sigma},
  \]
  where $\Sigma=[0,1]\times S^1$ and $\cA_{\Sigma}$ is the space of connections on a principal $G$-bundle $P_\Sigma=P_{S^1}\times [0,1]$.  
  Then we define 
  \ba\label{cL:Sigma}
  \cL_{\Sigma}  (\tilde x, \tilde \eta ) =  E(\tilde x,\tilde \eta).  
 \na 
 For a different extension $(\Sigma', \tilde x', \tilde \eta')$,  by the same argument in the proof of Lemma \ref{E_property},  we know that
 \[ \cL_{\Sigma'}  (\tilde x', \tilde \eta' )  -  \cL_{\Sigma}  (\tilde x, \tilde \eta )     
   =\< [\omega-\mu],  [u_G] \>, 
\]
for some $[u_G]\in H^G_2(X,\Z)$ defined by 
$x$ and $x'$. 
 If $( \tilde x \# \tilde x', \tilde \eta \#\tilde \eta') $ is not smooth, we can  choose a  smooth pair which is 
 homotopic to $( \tilde x \# \tilde x', \tilde \eta \#\tilde \eta') $. 
 Then the  topological invariance ensures that the result does not depend on  the choice of the smooth pair. 
Recall that  $\< [\omega-\mu], \cdot\>$  is the homomorphism 
\[
\< [\omega-\mu], \cdot\> :   H^G_2(X, \Z) \longrightarrow \R.
\] 
The image of $\< [\omega-\mu], \cdot\>$ consists of integer multiples of a fixed positive real number $N_{ [\omega-\mu]}$. Hence, modulo $\Z N_{ [\omega-\mu]}$, $\cL_{\Sigma}  (\tilde x, \tilde \eta )  $ descends to a well-defined function 
\ba\label{cL:[0]}
\cL (x, \eta) = \cL_{\Sigma}  (\tilde x, \tilde \eta )  \mod (\Z N_{ [\omega-\mu]}).
\na
We denote by $\cL:  \cC\to \R/ \Z N_{ [\omega-\mu]}$ the resulting circle-valued function.

 Lemma  \ref{E_property} implies that the following action functional on $\cC$ is well-defined.

 \begin{definition} Given a collection of base points  $\{(x_c, \eta_c) | c\in \pi^G_1(X)\}$ for the connected  components $\cC$ labelled by $\pi_1^G(X)$, let $\tilde \cC_{uni}$ be the associated  universal cover of $\cC$ defined  by the homotopy paths. 
 The action functional on $\tilde \cL:  \tilde \cC_{uni} \to \R$ is defined by  \eqref{cL:Sigma}
 for  a homotopy  path from  $(x, \eta) \in \cC$ to the  base point for the connected component.  The induced function
 \[
 \cL: \cC \longrightarrow \R/\Z N_{ [\omega-\mu] }
 \]
 is   called the action functional on $\cC$. 
 \end{definition}
    
  \begin{remark} \label{cover}
  There is a minimal  covering space  of $\cC$, denoted by $\tilde \cC$,  such that  the action  functional $    \cL$   can be lifted to a $\R$-valued function  $\tilde \cL$ on $\tilde \cC$ and  the following diagram commutes
\ba\xymatrix{
 \tilde \cC_{uni} \ar[rr]^{\tilde \cL} \ar[d] &&  \R \ar[d] \\
 \tilde \cC\ar[rr]^{\tilde \cL} \ar[d] &&  \R \ar[d] \\
  \cC\ar[rr]^{\cL} &&  \R/\Z N_{[\omega-\mu]}.  
}
\na
We write an element of $\tilde \cC$ in the fiber over $(x, \eta) \in \cC$ as an equivalent class a path connecting
$(x, \eta)$ to the base point of the connected component. 
\end{remark}

As the covering map $\tilde \cC_{uni} \to  \cC$ is a local diffeomorphism, 
the  differential  and the Hessian operator of $\cL$ can be calculated by the Fr\'echet derivatives of $\tilde \cL$ on 
 $\tilde \cC_{uni} $ or $\tilde \cC$. 
 For this purpose,  we introduce an $L^2$-inner product on  the tangent bundle $T\cC$, that is, for $(v_1, \xi_1), (v_2, \xi_2) 
\in T_{(x, \eta)} \cC,$ 
\ba\label{L^2:inner}
\< (v_1, \xi_1), (v_2, \xi_2)  \>  = \int_{S^1}\left( \omega(v_1, J v_2)  + \<\xi_1, \xi_2\>\right) d \theta.
\na
 
 \begin{proposition} With respect to the $L^2$-inner product,  the $L^2$-gradient of $\cL$ is given by
\ba\label{grad:cC}
\nabla \cL (x, \eta) = \left( J( \dfrac{\p x}{\p \theta} + \tilde \eta_x ),  \mu(x) \right).
\na
Hence, the critical point set is define by the equations
\ba\label{crit:cC}
 \dfrac{\p x}{\p \theta} + \tilde \eta_x =0 , \qquad \mu(x) =0.
\na
 \end{proposition}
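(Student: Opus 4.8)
The plan is to compute the first variation of $\cL$ directly and then read off the $L^2$-gradient from the defining inner product \eqref{L^2:inner}. Since the covering map $\tilde\cC_{uni}\to\cC$ is a local diffeomorphism, it suffices to differentiate the lift $\tilde\cL$, whose value at a point is an energy $E$ of a path back to the base point of the relevant component. The structural fact I will use, guaranteed by Lemma \ref{E_property}, is that along any smooth path $\gamma$ one has $\cL(\gamma(1))-\cL(\gamma(0))=E(\gamma)$; concatenating paths, the derivative of $\cL$ along $\gamma$ at a given parameter is the derivative of the truncated energy integral, and this collapses to the integrand of $E$ evaluated on the corresponding time-slice. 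Well-definedness of the resulting gradient is automatic because, as shown below, $d\cL$ is manifestly represented by pairing against $(v,\xi)$ in the $L^2$-inner product.

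Concretely, I would fix $(x,\eta)\in\cC$ and a tangent vector $(v,\xi)\in T_{(x,\eta)}\cC$, and choose a smooth family $(x_s,\eta_s)$ with $(x_0,\eta_0)=(x,\eta)$ and $\frac{d}{ds}\big|_{s=0}(x_s,\eta_s)=(v,\xi)$. Viewing $s$ as the cylinder ``time'' coordinate, this family is a pair $(u_\gamma,A_\gamma)$ on $S^1\times[0,s]$ in temporal gauge, with $\partial_t u_\gamma|_{t=0}=v$, $\partial_t\eta|_{t=0}=\xi$, and $\theta$-covariant derivative $\partial_\theta x+\tilde\eta_x$. In temporal gauge $F_{A_\gamma}=\partial_t\eta\,dt\wedge d\theta$, so inserting $(u_\gamma,A_\gamma)$ into \eqref{energy:path} (equivalently \eqref{energyidentity2}) gives an integrand $\omega(\partial_t u_\gamma,\,\partial_\theta u_\gamma+\tilde\eta_{u_\gamma})-\<\mu(u_\gamma),\partial_t\eta\>$. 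Differentiating $E(\gamma|_{[0,s]})$ at $s=0$ collapses the $t$-integral and leaves the integrand on the slice $t=0$, producing
\[
d\cL_{(x,\eta)}(v,\xi)=\int_{S^1}\Big(\omega\big(v,\,\partial_\theta x+\tilde\eta_x\big)-\<\mu(x),\xi\>\Big)\,d\theta,
\]
where the moment-map relation $d\mu_\xi=\omega(\tilde\xi,\cdot)$ is what lets me treat the variation of the $\<\mu(u_\gamma),F_{A_\gamma}\>$ term when the pullback form of $E$ is used instead.

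It then remains to compare this with $\<\nabla\cL,(v,\xi)\>=\int_{S^1}(\omega(\nabla_X\cL,Jv)+\<\nabla_{\fg}\cL,\xi\>)\,d\theta$ from \eqref{L^2:inner}. Using that $\omega(\cdot,J\cdot)$ is the $J$-compatible Riemannian metric on $X$ (so $\omega(a,Jb)=\omega(b,Ja)$) together with $J^2=-1$, the $v$-pairing forces the $X$-component of $\nabla\cL$ to be $J(\partial_\theta x+\tilde\eta_x)$, while the $\xi$-pairing identifies the $\fg$-component with $\mu(x)$, which is \eqref{grad:cC}. Finally, since $J$ is invertible, $\nabla\cL(x,\eta)=0$ is equivalent to the two equations $\partial_\theta x+\tilde\eta_x=0$ and $\mu(x)=0$, i.e.\ \eqref{crit:cC}.

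The main obstacle is entirely one of bookkeeping: the conventions for the infinitesimal action $\tilde\xi$ (defined here via $\exp(-t\xi)$, hence carrying a sign relative to the naive fundamental vector field), the orientation of $S^1\times I$ in the energy integral, and the passage from $\omega$ to the metric must all be fixed consistently, otherwise one obtains $\nabla\cL$ up to a global sign. The correct overall sign is pinned down by the requirement that the downward $L^2$-gradient flow $\partial_t(x,\eta)=-\nabla\cL$ reproduce exactly the symplectic vortex equations \eqref{vortex:cyl} in temporal gauge; this consistency check is what selects the signs appearing in \eqref{grad:cC}.
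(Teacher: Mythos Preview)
Your approach is essentially the same as the paper's: both compute the first variation of $\cL$ directly and read off the gradient via the $L^2$-inner product \eqref{L^2:inner}. The paper differentiates the explicit boundary-term formula for $\cL$ (the one appearing just after \eqref{energy:path}), uses the moment-map identity $\langle d\mu_x(v),\eta\rangle=\omega(\tilde\eta_x,v)$ to combine the $\partial_\theta x$ and $\tilde\eta_x$ contributions, and then rewrites $\omega(\partial_\theta x+\tilde\eta_x,v)=\omega\big(J(\partial_\theta x+\tilde\eta_x),Jv\big)$ to match \eqref{L^2:inner}; you instead work with the energy integrand $(d_{A_\gamma}u_\gamma)^*\omega-\langle\mu,F_{A_\gamma}\rangle$ on the cylinder in temporal gauge, which is the same computation in different clothing. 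One remark: your displayed formula for $d\cL$ carries the opposite global sign to the paper's (you get $\omega(v,\partial_\theta x+\tilde\eta_x)-\langle\mu(x),\xi\rangle$ where the paper gets $\omega(\partial_\theta x+\tilde\eta_x,v)+\langle\mu(x),\xi\rangle$), and you handle this by appealing to consistency with \eqref{vortex:cyl}; this is acceptable, but the paper simply tracks the signs through the boundary formula and gets them right on the nose, which is cleaner than deferring to a consistency check.
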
 
 \begin{proof}
 Let $ (D\cL)_{(x, \eta)} $ be the first Fr\'echet derivative  of $\cL$, that is, for any
$(v, \xi) \in T_{(x, \eta)} \cC$,
\ba\label{cal:grad}
\begin{array}{lll}
(D\cL)_{(x, \eta)} (v, \xi) &=&  \dfrac{\p }{\p  s} \Big|_{s=0}  \cL (\exp_x (sv), \eta+ s\xi)  \\[3mm]
&=&  -\disp{\int_{S^1} \omega (v,  \dfrac{\p x}{\p \theta})  d\theta+ \int_{S^1} \left(  \< d\mu_x(v), \eta\> + \<\mu(x), \xi\>\right)d\theta
}\\[3mm]
&=& \disp{\int_{S^1} \left(  \omega( \dfrac{\p x}{\p \theta} + \tilde \eta_x, v) +  \< \mu(x), \xi\>\right) d\theta } \\[3mm]
&=& \disp{\int_{S^1} \left(  \omega( J( \dfrac{\p x}{\p \theta} + \tilde \eta_x ) , J v) +  \< \mu(x), \xi\>\right) d\theta }\\[3mm]
&=&  \< (J( \dfrac{\p x}{\p \theta} + \tilde \eta_x ),  \mu(x) ), (v, \xi) \>. 
\end{array}
\na
Hence,  $L^2$-gradient of $\cL$ at $(x, \eta)$ is given by (\ref{grad:cC}). 
The proposition is proved.
\end{proof}

\begin{remark}
The   gradient equation  $\nabla \cL (x, \eta) =0$ can be thought as the Euler-Lagrange equations for the action functional $\cL$.  Moreover, the downward  gradient flow equation of $\cL$ on $\cC$ 
\ba\label{grad:flow}
\dfrac{\p}{\p t} \left(x(t), \eta(t) \right)  = -  \left( J( \dfrac{\p x}{\p \theta} + \tilde \eta_x ),  \mu(x) \right)
\na
is exactly the symplectic vortex equation (\ref{vortex:cyl}) on $S^1\times \R$ in temporal gauge.
\end{remark}

  Before we proceed further, let us investigate the gauge invariance of the action functional $\tilde \cL$.

    \begin{lemma}
  The action functional $\tilde \cL$  on $\tilde \cC$ is invariant under the action of $ L_0G$, the connected component of
  $LG$ of the identity.
  \end{lemma}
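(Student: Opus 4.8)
The plan is to use the defining formula $\tilde\cL([\gamma]) = E(\gamma)$, reduce the invariance to the vanishing of the energy of a path that lies in a single gauge orbit, and then establish that vanishing from the gauge--invariance of the energy density together with the flatness of the constant configuration.

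First I would set up the lifted action. Since $L_0G$ is path--connected, every $g\in L_0G$ can be joined to the identity by a path $s\mapsto g_s$, $s\in[0,1]$, with $g_0 = e$ and $g_1 = g$; consequently the map $a_g\colon\cC\to\cC$ given by the action (\ref{loop:action}) is homotopic to the identity through $a_{g_s}$, so it induces the identity on $\pi_1(\cC)$ and lifts to the covering $\tilde\cC\to\cC$. Concretely, for a point of $\tilde\cC$ represented by a path $\gamma$ from the base point $(x_c,\eta_c)$ of its component to $(x,\eta)$, I define the lift by $\tilde a_g[\gamma] = [\gamma\sharp\sigma_g]$, where $\sigma_g(s) = g_s\cdot(x,\eta)$ is the path from $(x,\eta)$ to $g\cdot(x,\eta)$ swept out inside the gauge orbit.

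Next I would reduce the statement to a single computation. Since $E$ is additive under concatenation of paths (the associated cylinders are glued along $S^1$) and $\tilde\cL[\gamma] = E(\gamma)$, we have
\[
\tilde\cL(\tilde a_g[\gamma]) = E(\gamma\sharp\sigma_g) = E(\gamma) + E(\sigma_g) = \tilde\cL[\gamma] + E(\sigma_g).
\]
Thus invariance is equivalent to the identity $E(\sigma_g) = 0$ for every path $s\mapsto g_s$ in $L_0G$, and since this holds independently of the chosen $g_s$ it is exactly what makes the lifted action compatible with $\tilde\cL$. Writing the pair on $S^1\times I$ associated to $\sigma_g$ as $(u,A)$ with $u(\theta,s) = g_s(\theta)\cdot x(\theta)$ and $A = d + \eta_s\,d\theta$ in temporal gauge, the conceptual point is that $(u,A)$ differs only by a gauge transformation from the constant configuration at $(x,\eta)$: the two--parameter gauge transformation $G(\theta,s) = g_s(\theta)$ carries the constant pair to $(u, A + \beta\,ds)$ with $\beta = g_s^{-1}\p_s g_s$. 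The constant pair has vanishing energy, since its density $(d_Au)^\ast\omega - \<\mu(u),F_A\>$ is built from $u_G^\ast(\omega-\mu)$, which vanishes here because $u$ factors through the $1$--dimensional $S^1$ and the connection is flat.

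The main obstacle is precisely this vanishing $E(\sigma_g)=0$: the natural gauge transformation relating $\sigma_g$ to the constant path is \emph{not} compatible with the temporal--gauge normalization used to define path energy, so one cannot simply quote gauge--invariance of $E$. The discrepancy is the extra $ds$--component $\beta\,ds$, which I would absorb by the direct computation
\[
E(u,A) - E(u, A+\beta\,ds) = \int_{S^1\times I}\p_\theta\<\mu(u),\beta\>\,d\theta\wedge ds = 0,
\]
where the integrand collapses to a total $\theta$--derivative after using $d\mu(\tilde\xi) = [\xi,\mu]$ and the $\mathrm{ad}$--invariance of $\<\cdot,\cdot\>$, and then integrates to zero over $S^1$. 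Combining this with the vanishing energy of the constant pair gives $E(\sigma_g) = 0$, and hence $\tilde\cL(\tilde a_g[\gamma]) = \tilde\cL[\gamma]$. Everything else (lifting the action, additivity of $E$, and independence of the path $g_s$) is formal once this boundary identity is in hand.
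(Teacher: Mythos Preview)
Your argument is correct, but it takes a different route from the paper's proof. The paper works infinitesimally: it shows that for any path $\gamma(t)$ in $\tilde\cC$ tangent to the $L_0G$--orbit one has $\frac{d}{dt}\big|_{t=0}\tilde\cL(\gamma(t))=0$, by pairing the already--computed gradient $\nabla\tilde\cL(x,\eta)=\big(J(\partial_\theta x+\tilde\eta_x),\,\mu(x)\big)$ with an infinitesimal gauge direction $(-\tilde\xi_x,\,\partial_\theta\xi+[\eta,\xi])$ and reducing the result to $\int_{S^1} d\langle\mu(x),\xi\rangle=0$. This is essentially the statement that gauge directions lie in the kernel of $D\cL$, and it costs only a few lines once the gradient is known.

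Your approach is global: you show directly that the path energy $E(\sigma_g)$ along a gauge orbit vanishes, using that the two--dimensional density $(d_A u)^*\omega-\langle\mu(u),F_A\rangle$ is invariant under full $2$D gauge transformations and that the constant configuration has zero energy. The one genuine step beyond Lemma~\ref{E_property} is handling the mismatch between the temporal--gauge pair and the $2$D gauge transform of the constant pair, which you correctly reduce to $\int_{S^1\times I}\partial_\theta\langle\mu(u),\beta\rangle\,d\theta\wedge ds=0$. Note, however, that this last vanishing needs nothing more than the $\theta$--periodicity of $\langle\mu(u),\beta\rangle$; the identities $d\mu(\tilde\xi)=[\xi,\mu]$ and $\mathrm{ad}$--invariance you invoke are not actually used here. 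Your route is more geometric and makes the role of $2$D gauge invariance explicit, while the paper's infinitesimal computation is shorter and reuses the gradient formula already established.
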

  
  \begin{proof} We show that $\tilde \cL$ is constant on any  orbit of $L_0G$, equivalently, for any
  path $\gamma (t) $  in $\tilde \cC$  through  $ \gamma(0) =[x, \eta, [\tilde x] ]$ along the $L_0G$-orbit, we need to prove
  \[
  \dfrac{\p}{\p t} \Big|_{t=0} \tilde   \cL (\gamma (t))  =0.
  \]
  We can assume that the tangent vector defined by $\gamma (t) $ is 
 $
  (-\tilde \xi_x,  \dfrac{\p\xi}{\p \theta} + [\eta, \xi])
$
for $\xi \in L\fg =C^\infty (S^1, \fg)$.
  Then the  calculation in (\ref{cal:grad}) implies
  that 
  \[
  \begin{array}{lll}
&&    \dfrac{\p}{\p t} \Big|_{t=0}  \tilde  \cL (\gamma (t)) \\[3mm]
&=&\<  \nabla \tilde \cL (x, \eta),   (-\tilde \xi_x,  \dfrac{\p\xi}{\p \theta} + [\eta, \xi])\>\\[2mm]
&=& \disp{\int_{S^1}} \left( \omega (\dfrac{\p x}{\p\theta} +\tilde\eta_x, -\tilde \xi_x) +\<\mu(x),  \dfrac{\p\xi}{\p \theta} + [\eta, \xi]\>\right) d\theta\\[3mm]
&=&  \disp{\int_{S^1}} \left( \omega (\dfrac{\p x}{\p\theta}, -\tilde\xi_x) -\omega (\tilde\eta_x, \tilde\xi_x) 
+\<\mu(x),  \dfrac{\p\xi}{\p \theta} \>  + \omega (\tilde\eta_x, \tilde\xi_x)  \right) d\theta\\[3mm]
&=& \disp{\int_{S^1}} \left(  \<d\mu_x (\dfrac{\p x}{\p \theta}), \xi\> 
+\<\mu(x),  \dfrac{\p\xi}{\p \theta} \>   \right) d\theta\\[3mm]
&=& \disp{\int_{S^1} }d  \< \mu (x), \xi> =0. 
\end{array}
\]
Here we applied the equality: $\omega (\tilde\eta_x, \tilde\xi_x) = \<\mu(x),   [\eta, \xi]\>$. 
This completes the proof.
  \end{proof}

 Given $(x, \eta) \in \Crit (\cL)$ and $g \in LG$, by property (1) in Lemma \ref{E_property}, $ g\cdot (x, \eta)$ is also a  critical point. That means, 
the critical point set $\Crit(\cL)$  is $LG$-invariant. Note that the based gauge group
\[
\Omega G =\{g\in LG| g(1) = e, \text{the identity element in $G$}
\}
\]
acts on $\cC$ freely. In the next lemma, we provide a description the critical point set modulo the group 
$\Omega G$ on the set theoretical level. For this purpose, we consider $C^\infty (S^1, \fg)$ as the space of
connections on the trivial bundle $S^1\times G$, where we treat $\xi\in C^\infty (S^1, \fg)$
as a $\fg$-valued 1-form  $\xi d\theta$ on $S^1$. Then there is a holonomy map
\[
Hol:  C^\infty (S^1, \fg) \longrightarrow G.
\]
Note that  $Hol:  C^\infty (S^1, \fg) \longrightarrow G$ is the  universal principal $\Omega G$-bundle with $\Omega G$-action on $ C^\infty (S^1, \fg)$ given  by the gauge transformation.   

\begin{lemma}\label{crit:red}  Modulo the based gauge group $\Omega G$, 
the holonomy map $Hol:  \Crit (\cC)/\Omega G    \longrightarrow G$ 
defines a fibration over $G$ whose fiber over $g\in G$ is 
$(\mu^{-1}(0))^g$,  the $g$-fixed point set 
 in $\mu^{-1}(0)$. That is, we have 
 \[
  \Cr (\cL) /\Omega G   =    \bigsqcup_{g\in G} (\mu^{-1}(0))^g. 
 \]
  \end{lemma}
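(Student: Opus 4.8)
The critical point equations (\ref{crit:cC}) read $\frac{\p x}{\p\theta}+\tilde\eta_x=0$ and $\mu(x)=0$, so a critical point is a pair $(x,\eta)\in C^\infty(S^1,X\times\fg)$ where $\eta$ is a connection whose associated covariant derivative kills $x$ and where $x(\theta)\in\mu^{-1}(0)$ for all $\theta$. The strategy is to interpret the first equation as a parallel transport / holonomy condition and then quotient by the based gauge group $\Omega G$. First I would observe that the equation $\frac{\p x}{\p\theta}=-\tilde\eta_x$ says precisely that $x(\theta)$ is the image of $x(0)$ under the parallel transport determined by the connection $\eta\, d\theta$ on the trivial bundle $S^1\times G$. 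Thus $x$ is completely determined by its initial value $x_0:=x(0)$ together with $\eta$, and the periodicity $x(1)=x(0)$ forces $x_0$ to be fixed by the holonomy $g=\mathrm{Hol}(\eta)\in G$; concretely, if $\sigma(\theta)\in G$ solves $\sigma'\sigma^{-1}=-\eta$ with $\sigma(0)=e$, then $x(\theta)=\sigma(\theta)\cdot x_0$ and periodicity gives $\sigma(1)\cdot x_0=x_0$, i.e. $g\cdot x_0=x_0$ with $g=\sigma(1)^{-1}$ (up to the sign/inverse conventions in the holonomy map). Combined with $\mu(x(\theta))=0$ and the $G$-equivariance of $\mu$, which guarantees that $\mu(x(\theta))=0$ for all $\theta$ iff $\mu(x_0)=0$, this exhibits every critical point as encoded by the data $(x_0,g)$ with $x_0\in(\mu^{-1}(0))^g$.

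\smallskip

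Next I would use the fact, recalled just before the statement, that $\mathrm{Hol}:C^\infty(S^1,\fg)\to G$ is the universal principal $\Omega G$-bundle, with $\Omega G$ acting freely on the space of connections by gauge transformations. The based gauge group $\Omega G$ acts on $\mathrm{Crit}(\cL)$ by (\ref{loop:action}) restricted to loops $g$ with $g(1)=e$, and this action covers the $\Omega G$-action on the $\eta$-factor. Since $\Omega G$ acts freely and transitively on each holonomy fiber of $\mathrm{Hol}$, every gauge orbit in $\mathrm{Crit}(\cL)$ contains a representative with $\eta$ in a chosen section of $\mathrm{Hol}$ over a neighborhood of $g$; using such a slice I can trivialize the $\eta$-dependence and reduce a critical point to the pair $(x_0,g)$. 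The holonomy map then descends to a well-defined map $\mathrm{Hol}:\mathrm{Crit}(\cL)/\Omega G\to G$, and the discussion above identifies its fiber over $g$ with $(\mu^{-1}(0))^g$, yielding the claimed bijection $\mathrm{Crit}(\cL)/\Omega G\cong\bigsqcup_{g\in G}(\mu^{-1}(0))^g$.

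\smallskip

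The one point requiring genuine care — the main obstacle — is verifying that this set-theoretic bijection is compatible with the fibration structure and is independent of the choices made. Specifically I must check: (i) that the assignment $(x,\eta)\mapsto(x_0,g)$ is well-defined on $\Omega G$-orbits, i.e. that a based gauge transformation changes $\eta$ to another connection with the same holonomy $g$ (so $g$ is a genuine invariant of the orbit) while moving $x_0$ within the same fixed-point set — here one must confirm that basing at $\theta=0$ with $g(1)=e$ indeed leaves $x(0)$ unchanged, so that $x_0$ is a well-defined invariant of the orbit; and (ii) that the local triviality of $\mathrm{Hol}$ as an $\Omega G$-bundle lets the fiberwise identification vary continuously in $g$, giving the fibration rather than merely a fiberwise bijection. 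The rest is routine: equivariance of $\mu$ handles the propagation of $\mu(x)=0$ along the $S^1$-direction, and freeness of the $\Omega G$-action ensures the quotient is clean.
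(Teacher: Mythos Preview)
Your proposal is correct and follows essentially the same route as the paper: interpret the equation $\dot x+\tilde\eta_x=0$ as parallel transport so that a critical point is determined by $(x(0),\eta)$ with $\mathrm{Hol}(\eta)\cdot x(0)=x(0)$, then use that $\mathrm{Hol}:C^\infty(S^1,\fg)\to G$ is a principal $\Omega G$-bundle to collapse the $\eta$-coordinate to its holonomy. The paper's argument is terser and treats the fibration claim essentially set-theoretically; your points (i) and (ii) about well-definedness on $\Omega G$-orbits and local triviality are exactly the checks the paper leaves implicit.
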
 
\begin{proof} 
 Given $(x(\theta), \eta (\theta) ) \in \Cr (\cL)$, then $x(\theta) \in C^\infty (S^1, \mu^{-1}(0))$ and
 \[
 \dot{x} (\theta) = - \tilde \eta_{x(\theta)}.
 \]
 Solving the above  ordinary differential equation over the interval $x: [0, 2\pi] \to X$  with an initial condition $x(0) = p\in \mu^{-1}(0)$, we get a unique   solution.  The  condition of $x$ be a loop in $X$ is that $\eta$ satisfies the condition
 \[
 x(2\pi) = Hol (\eta) \cdot p =p.
 \]
 Hence, we get 
 \[
 \Crit (\cL) \cong  \{
 (p, \eta)|  p\in \mu^{-1}(0),  \eta \in C^\infty(S^1, \fg), Hol (\eta) \cdot p =p
 \}.
 \]
The action of $\Omega G$ on the right hand side is given by the gauge transformation on the second component.  Note that the holonomy map $Hol: C^\infty(S^1, \fg) \to G$ is  a principal $\Omega G$-bundle.  Any $\Omega G$-orbit  at $\eta$ is determined by $Hol (\eta)$.   So we get the first identification, 
 \[
 \Crit(\cL) /\Omega G  \cong  \{(p, g)| p\in \mu^{-1} (0), g\in G_p\}.
 \]
 Now it is easy to see that the holonomy map on $ \{(p, g)| p\in \mu^{-1} (0), g\in G_p\}$ is just the projection to the second factor, whose fiber at $g$ is $(\mu^{-1}(0))^g$. 
     So the lemma is established. 
 \end{proof}

    \begin{remark}\label{rmk:crit}  Set-theoretically, the critical point set $\Crit(\cL)/LG$ can be identified with
    \[
   I [\mu^{-1}(0)/G] \cong  \left(  \mu^{-1}(0)/G \right)\quad  \sqcup \bigsqcup_{(e) \neq (g)\in \cC(G)} (\mu^{-1}(0))^g/C(g),
 \]
  the  inertia groupoid arising from  the  action groupoid   $[ \mu^{-1}(0)/G]  = \mu^{-1}(0) \rtimes G $. 
 Here $\cC(G)$ is the set of conjugacy class in $G$ with a {\em choice }  function $\cC(G) \to G$
 sending $(g)$ to $g\in (g)$, and $C(g)$ is the centraliser of $g$ in $G$. 
     \begin{enumerate}
\item If $G$ acts on $\mu^{-1}(0)$ freely, then $\Crit(\cL) /LG \cong   \mu^{-1}(0)/G $ is   the  symplectic  quotient (also called the reduced space)  of $(X, \omega)$.
 \item If $G$-action on $\mu^{-1}(0)$  is only locally free, then  $\Crit(\cL) /LG$ is the inertia orbifold of  the symplectic orbifold    $\cX_0 = [ \mu^{-1}(0)/G]$.
 \item If $0$ is not a regular value of $\mu$, then  $\mu^{-1}(0)/G$  admits a  symplectic orbifold stratified space, labelled by orbit types (\cite{SL}). 
  \end{enumerate}
    \end{remark}

     This remark suggests that the critical point set $\Crit(\cL)/LG$ can be endowed with a symplectic orbifold structure when $0$  is  a regular value of $\mu$ and the functional is Bott-Morse type in this case. For this, we may investigate the Hessian operator of $\cL$ 
      at $(x, \eta)\in \Crit (\cL)$.    In this paper, we only consider the case that 0 is a regular value of $\mu$. The inertia orbifold of $\cX_0$ is written as
    \[
 I\cX_0 = \bigsqcup_{(g)}  \cX_0^{(g)}
 \]
 where $(g)$ runs over the conjugacy class in $G$ with   non-empty fixed points in $\mu^{-1}(0)$. Note that for a non-trivial conjugacy class $(g)$, $\cX_0^{(g)}$ is often called a twisted sector of $\cX_0$, which is
 diffeomorphic to the orbifold arising from the action of $C(g)$ on $ \mu^{-1}(0)^{g}$ for a representative $g$ in the conjugacy class $(g)$. Here $C(g)$ denotes the centralizer of $g$ in $G$.

\begin{proposition} \label{Hessian} Assume that 0 is a regular value of $\mu$.
Let $(x, \eta)\in \Crit (\cL)$, the Hessian operator  of $\cL$ at $(x, \eta)$
 \[
 \Hess_{(x, \eta)}:  \Gamma_{C^\infty}(S^1, x^*TX \times \fg)   \longrightarrow  \Gamma_{C^\infty}(S^1, x^*TX \times \fg) \]
 is defined by $(v, \xi) \mapsto  \left( J( \nabla_{\frac {\p}{\p\theta}} x^* v  +  \tilde  \xi  + \nabla_{v }\tilde \eta), d\mu_x (v) \right),$
which   is a symmetric operator with respect to the inner product (\ref{L^2:inner}). 
\end{proposition}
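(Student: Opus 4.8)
The plan is to identify the Hessian with the covariant derivative of the gradient vector field $\nabla\cL$ computed in (\ref{grad:cC}), and then to check that the resulting operator is self-adjoint for the $L^2$-inner product (\ref{L^2:inner}). Throughout I would pass to the cover on which $\cL$ lifts to the honest $\R$-valued $\tilde\cL$ (Remark \ref{cover}), so that differentiating twice is unambiguous, and equip $X$ with the Levi-Civita connection $\nabla$ of the metric $g_J=\omega(\cdot,J\cdot)$, combined with the flat structure on the $\fg$-factor, to get a connection on $T\cC$. The Hessian operator is then $\Hess_{(x,\eta)}(v,\xi)=\frac{D}{ds}\big|_{s=0}\nabla\cL(x_s,\eta_s)$ for a path $(x_s,\eta_s)$ through $(x,\eta)$ with initial velocity $(v,\xi)$; at a critical point this is independent of the chosen connection, which is exactly what makes the expression meaningful.

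For the formula, I would first differentiate the first component $J(\frac{\p x}{\p\theta}+\tilde\eta_x)$. Writing $\frac{D}{ds}[J(\cdots)]=(\nabla_v J)(\cdots)+J\frac{D}{ds}(\cdots)$ and using that $\frac{\p x}{\p\theta}+\tilde\eta_x=0$ at a critical point by (\ref{crit:cC}), the first term drops out. The surviving $J\frac{D}{ds}\big(\frac{\p x_s}{\p\theta}+\widetilde{\eta_s}\big)$ splits into two pieces: by torsion-freeness of $\nabla$ one has $\frac{D}{ds}\frac{\p x_s}{\p\theta}=\nabla_{\frac{\p}{\p\theta}}x^*v$, while since $\eta_s=\eta+s\xi$ is linear in $\eta$ and the base point moves with velocity $v$, the product rule gives $\frac{D}{ds}\widetilde{\eta_s}\big|_{x_s}=\tilde\xi+\nabla_v\tilde\eta$. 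The second component $\mu(x)$ differentiates to $d\mu_x(v)$ since $\fg$ is a fixed vector space. Assembling these yields the stated expression.

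For symmetry I would note the conceptual reason first: the second derivative of a function at a critical point is a symmetric bilinear form, so $\Hess_{(x,\eta)}$ is automatically self-adjoint, and the explicit computation is a consistency check. Expanding $\<\Hess(v_1,\xi_1),(v_2,\xi_2)\>$ with $\omega(Ja,Jb)=\omega(a,b)$ reduces it to $\int_{S^1}\omega\big(\nabla_{\frac{\p}{\p\theta}}v_1+\tilde\xi_1+\nabla_{v_1}\tilde\eta,\,v_2\big)d\theta+\int_{S^1}\<d\mu_x(v_1),\xi_2\>d\theta$, and I would show each group of terms is symmetric in the indices $1,2$. The $\frac{\p}{\p\theta}$-term is handled by integrating $\frac{d}{d\theta}\omega(v_1,v_2)$ over $S^1$, where the boundary term vanishes and $\nabla$ preserves $\omega$. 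The cross terms pair off by the defining identity $d\mu_\xi=\omega(\tilde\xi,\cdot)$, which gives $\int\omega(\tilde\xi_1,v_2)=\int\<\xi_1,d\mu_x(v_2)\>$ and symmetrically. Finally $\int\omega(\nabla_{v_1}\tilde\eta,v_2)=\int(\nabla_{v_1}\alpha)(v_2)$ for the closed one-form $\alpha=\iota_{\tilde\eta}\omega=d\mu_\eta$, and $d\alpha=0$ forces $\nabla\alpha$ to be a symmetric $2$-tensor, so this term is symmetric as well.

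The main obstacle, I expect, is not the algebra but the geometric bookkeeping around the connection. In the general almost-Kähler setting $\nabla J\neq 0$ and $\nabla\omega\neq 0$, so the manipulations "$\omega(Ja,Jb)=\omega(a,b)$" and "$\nabla$ preserves $\omega$" hold literally only in the Kähler case; in general one must either invoke a Hermitian connection or lean on the fact that the Hessian bilinear form at a critical point is intrinsic, so that connection-dependent discrepancies cancel. Making this independence explicit, together with the rigorous meaning of $\frac{D}{ds}$ on the Fréchet manifold $\cC$, is where the care is needed; the remainder is the routine computation sketched above.
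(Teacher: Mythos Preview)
Your proposal is correct and follows essentially the same approach as the paper: both differentiate the gradient $\nabla\cL$ along a variation, use the critical-point condition $\tfrac{\partial x}{\partial\theta}+\tilde\eta_x=0$ to kill the $(\nabla_vJ)$-type term, and then verify symmetry via the same three identities (integration by parts for the $\theta$-derivative term, the moment-map identity $\omega(\tilde\xi,v)=\langle d\mu_x(v),\xi\rangle$ for the cross terms, and closedness of $d\mu_\eta$ for the $\nabla_v\tilde\eta$ term). Your explicit acknowledgment that in the almost-K\"ahler case $\nabla\omega\neq 0$ forces one to fall back on the intrinsic symmetry of the second variation at a critical point is in fact more careful than the paper, which simply asserts the three identities.
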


\begin{proof} 
 Given $ (v_1, \xi_1), (v_2, \xi_2) \in T_{(x, \eta)} \cC =  \Gamma_{C^\infty}(S^1, x^*TX \times \fg)$,  the Hessian operator 
 \[
\Hess_{(x, \eta)}:  T_{(x, \eta)}^{L^2} (\cC_{1, p}) \to   T_{(x, \eta)}^{L^2} (\cC_{1, p}) 
\]
is  defined by the second Fr\'echet derivative 
\[   \<  (v_1, \xi_1),  \Hess_{(x, \eta)} (v_2, \xi_2)\> = D^2 \cL_{(x, \eta)} ((v_1, \xi_1), (v_2, \xi_2)).
\]
Denote by $\bar v_1$ the parallel transport along a path $\exp_x(sv_1)$. 
  \[\begin{array}{lll}
    &&  D^2 \cL_{(x, \eta)} ((v_1, \xi_1), (v_2, \xi_2)) \\[2mm]
 &=& \dfrac{d}{ds}\Big|_{s=0} \left(( D\cL)_{\exp_x (sv_2), \eta+s\xi_2} (\bar v_1,  \xi_1) \right) \\[3mm]
 &=& \dfrac{d}{ds}\Big|_{s=0} \left(
  \disp{\int_{S^1} \left(  \omega_{\exp_x (sv_2)} ( \dfrac{\p (\exp_x (sv_2) )}{\p \theta} + \widetilde{(\eta+ s\xi_2)}_{\exp_x (sv_2)},  \bar v_1)  +  \< \mu(\exp_x (sv_2)), \xi_1\>\right) d\theta }
  \right).
 \end{array} 
 \]
 Note that $\dfrac{\p \omega_{\exp_x (sv_2)}  }{\p s}\Big|_{s=0} \left( \dfrac{\p x}{\p \theta} + \tilde \eta_x, \cdot\right ) =0$, as $\dfrac{\p x}{\p \theta} + \tilde \eta_x=0$. We can continue the above calculation as follows
 \[
 \begin{array}{lll}
 &=&   \disp{\int_{S^1}  \left(  \omega  ( \nabla_{\frac {\p}{\p\theta}}  x^*v_2, v_1)  d\theta
   + \omega_x ( \nabla_{v_2}\tilde \eta    +    \tilde  \xi_2 , v_1)   d\theta+ 
 \< d\mu_x (v_2), \xi_1\> \right)}\\[3mm]
 &=&  \disp{\int_{S^1}  \left(  \omega  \big( \nabla_{\frac {\p}{\p\theta}}  x^*v_2 +  \tilde  \xi_2   +\nabla_{v_2}\tilde \eta, v_1\big) + 
\< d\mu_x (v_2), \xi_1\>  \right)  d\theta}\\[3mm]
 &=&  \disp{\int_{S^1}  \left(  \omega   \big(\nabla_{\frac {\p}{\p\theta}} x^* v_2 +  \tilde  \xi_2 + \nabla_{v_2}\tilde \eta, v_1\big) + 
\< d\mu_x (v_2), \xi_1\>  \right)  d\theta}.
\end{array}
\]
Here  $\nabla$ is the Levi-Civita covariant derivative associated to the Riemannian metric on $X$. 
  So the  Hessian operator $\Hess_{(x, \eta)}$
is  defined   by
\[
\Hess_{(x, \eta)} (v, \xi) = \left( J( \nabla_{\frac {\p}{\p\theta}} x^* v  +  \tilde  \xi  + \nabla_{v }\tilde \eta), d\mu_x (v) \right).
\]
It is a symmetric operator  due to  the following identity.
\[
 D^2 \cL_{(x, \eta)} ((v_1, \xi_1), (v_2, \xi_2)) =   D^2 \cL_{(x, \eta)} ((v_2, \xi_2), (v_1, \xi_1)),
 \]
 This identity  can be proved   using the following three identities. 
 \begin{enumerate}
\item $\disp{ \int_{S^1}  \left(   \omega  (\nabla_{\frac {\p}{\p\theta}} x^* v_1,  v_2) 
-  \omega  (\nabla_{\frac {\p}{\p\theta}} x^* v_2, v_1)   \right) d\theta}=  0.$ 
  \item $  \disp{ \int_{S^1}  \left(  
 \omega   \big(   \tilde  \xi_2 , v_1\big) +  \< d\mu_x (v_2), \xi_1\>  \right)  d\theta} 
=  \disp{ \int_{S^1}  \left(  
 \omega   \big(   \tilde  \xi_2 , v_1\big) +   \big(   \tilde  \xi_1 , v_2\big)   \right)  d\theta, }
$
which is symmetric in $(v_1, \xi_1)$ and $ (v_2, \xi_2))$.
\item $ \disp{ \int_{S^1}  \omega  (  \nabla_{v_2}\tilde \eta, v_1)  d\theta} = 
\disp{ \int_{S^1}  \omega  (  \nabla_{v_1}\tilde \eta, v_2)}$. 
\end{enumerate}
\end{proof} 
  
 We can choose a representative for any critical point in $\Crit(\cL)$ according to its holonomy. If a critical
 point has a trivial holonomy, then using a based gauge transformation,  it is gauge equivalent to
 a critical point of the form
 \[
 (x, 0) \in \mu^{-1}(0) \times \fg.
 \]
If a  critical  point has a non-trivial holonomy $g= \exp(2\pi \eta)$ for $\eta\in \fg$, then it is 
gauge equivalent to
 a critical point of the form
 \[
 ( \exp(2\pi t \eta )x, \eta d\theta)
 \]
for $t\in [0, 2\pi]$ and  $x\in (\mu^{-1}(0))^g$. As the Hessian operator is equivariant under the gauge transformation, it is often
simpler to study the Hessian operator at critical points of the above special form.

  \begin{corollary}   The Hessian operator at  the critical point $(x, 0) \in \mu^{-1}(0) \times \fg$ is given by 
\[
\Hess_{(x, 0)} (v, \xi)=   \left(  J( \dfrac{\p v }{\p \theta} +   \tilde  \xi_x ), d\mu_x (v) \right).
\] 
The  Hessian operator  at the critical point of the form $ (x, \eta) =  ( \exp(2\pi\theta \eta) x_0, \eta)
$
for  $\theta \in [0, 2\pi]$ and  $x_0\in (\mu^{-1}(0))^g$  and $g=\exp(2\pi \eta)$  is given by 
\[
\Hess_{( x, \eta)}(v, \xi) \mapsto  \left(  J( -L_{\tilde \eta} v +   \tilde  \xi_x ), d\mu_x (v) \right).
\]
Here $L_{\tilde \eta} v$ is the Lie derivative of $v$ along the vector field $\tilde \eta$. 
 \end{corollary}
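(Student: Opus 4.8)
The plan is to read both formulas straight off the general Hessian expression
\[
\Hess_{(x,\eta)}(v,\xi)=\left(J\big(\nabla_{\frac{\p}{\p\theta}}x^\ast v+\tilde\xi+\nabla_v\tilde\eta\big),\; d\mu_x(v)\right)
\]
proved in Proposition \ref{Hessian}, by simplifying the connection term $\nabla_{\frac{\p}{\p\theta}}x^\ast v+\nabla_v\tilde\eta$ on each of the two normal forms. In both cases the second slot $d\mu_x(v)$ and the term $\tilde\xi_x$ are untouched, so only the tangential first slot needs work.

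First I would treat the base critical point $(x,0)$ with $x\in\mu^{-1}(0)$. Since $\eta=0$ we have $\tilde\eta\equiv0$, hence $\nabla_v\tilde\eta=0$; and the critical point equation $\frac{\p x}{\p\theta}+\tilde\eta_x=0$ forces $\frac{\p x}{\p\theta}=0$, so $x$ is the constant loop. For a constant base map the pullback of the Levi--Civita connection along $\theta\mapsto x$ is the trivial product connection on the fixed space $T_xX$, whence $\nabla_{\frac{\p}{\p\theta}}x^\ast v=\dfrac{\p v}{\p\theta}$. Substituting these gives $\Hess_{(x,0)}(v,\xi)=\big(J(\frac{\p v}{\p\theta}+\tilde\xi_x),\,d\mu_x(v)\big)$, as claimed.

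For the holonomy normal form $(x,\eta)$, the loop $x(\theta)$ is obtained by flowing $x_0\in(\mu^{-1}(0))^g$, and the key observation is that it is an integral curve of $\tilde\eta$: differentiating it and using the definition of the infinitesimal action reproduces the critical point equation $\frac{\p x}{\p\theta}=-\tilde\eta_{x(\theta)}$, while the condition $g=\exp(2\pi\eta)$ with $gx_0=x_0$ makes it close up. Because the curve is tangent to $-\tilde\eta$, the pullback covariant derivative satisfies $\nabla_{\frac{\p}{\p\theta}}x^\ast v=-\nabla_{\tilde\eta}v$, where $\nabla_{\tilde\eta}v$ is the covariant derivative of $v$ in the $\tilde\eta$-direction along this curve. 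Since $\nabla$ is torsion-free, the Lie derivative splits as $L_{\tilde\eta}v=\nabla_{\tilde\eta}v-\nabla_v\tilde\eta$, so
\[
\nabla_{\frac{\p}{\p\theta}}x^\ast v+\nabla_v\tilde\eta=-\nabla_{\tilde\eta}v+\nabla_v\tilde\eta=-L_{\tilde\eta}v,
\]
and plugging this into the general formula yields $\Hess_{(x,\eta)}(v,\xi)=\big(J(-L_{\tilde\eta}v+\tilde\xi_x),\,d\mu_x(v)\big)$.

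The one point I expect to need genuine care is the meaning of $L_{\tilde\eta}v$ (equivalently $\nabla_{\tilde\eta}v$) when $v$ is a section defined only along the loop $x(\theta)$, since a Lie derivative ordinarily requires $v$ to be a vector field on a neighbourhood. This is legitimate precisely because $x(\theta)$ is an integral curve of $\tilde\eta$: the flow-derivative description of $L_{\tilde\eta}$ along an integral curve depends only on the values of $v$ on that curve, so $\nabla_{\tilde\eta}v=-\nabla_{\frac{\p}{\p\theta}}x^\ast v$ is well-defined and independent of any extension, and I would state this explicitly. I would also flag that the $\eta=0$ formula is genuinely separate and is \emph{not} recovered by setting $\tilde\eta=0$ in the second case: when $\eta=0$ the explicit $\theta$-dependence of $v$ survives as $\frac{\p v}{\p\theta}$, whereas when $\eta\neq0$ it is absorbed into motion along the (now non-degenerate) integral curve and reappears inside $L_{\tilde\eta}v$.
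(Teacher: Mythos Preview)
Your proof is correct and follows essentially the same route as the paper: specialize the general Hessian formula from Proposition~\ref{Hessian}, note that $\frac{\p x}{\p\theta}=-\tilde\eta_x$ along the loop so that $\nabla_{\frac{\p}{\p\theta}}x^*v=-\nabla_{\tilde\eta}v$, and use torsion-freeness to combine $-\nabla_{\tilde\eta}v+\nabla_v\tilde\eta=-L_{\tilde\eta}v$. Your added remarks on the intrinsic meaning of $L_{\tilde\eta}v$ along an integral curve and on why the $\eta=0$ case is not the naive limit of the second formula are helpful clarifications beyond what the paper spells out.
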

  
  \begin{proof}
 It is straightforward  to check that  the Hessian operator at a critical point of the form
 $(x, 0) \in \mu^{-1}(0) \times \fg$ is given by 
$(v, \xi) \mapsto  \left(  J( \dfrac{\p v }{\p \theta} +   \tilde  \xi_x ), d\mu_x (v) \right)$. 

At the critical point of the form $  (x, \eta) =  ( \exp(2\pi\theta \eta) x_0, \eta)$, the vector field $\frac{\p}{d \theta}$  along the loop
$x= \exp(2\pi \theta \eta) x_0$ agrees with $-\tilde \eta$, then we get
\[
\nabla_{\frac {\p}{\p\theta}} x^* v  + \nabla_{v }\tilde \eta  =  -  \nabla_{ \tilde \eta}   v  + \nabla_{v }\tilde \eta  
=   -L_{\tilde \eta} v.
\]
Hence, the Hessian operator at this critical point is given by $(v, \xi) \mapsto  \left(  J( -L_{\tilde \eta} v +   \tilde  \xi_x ), d\mu_x (v) \right)$. 
  \end{proof}
     
     Now we introduce the standard   Banach completion of $\cC$. This Banach set-up is also crucial for the  Fredholm analysis of  the gradient flowlines of $\cL$,   equivalently, the symplectic vortices on $S^1\times \R$. 
 
Consider  the Banach manifold 
 \[
 \cC_{1, p} = \{(x, \eta) \in W^{1, p}(S^1, X\times \fg) \}.
 \]
 Here $p \geq 2$, so $(x, \eta)$ is a continuous map.  For simplicity, one could just take $p=2$.  The tangent space of $\cC_{1, p} $ at $(x, \eta) $ is
 \[
 T_{(x, \eta)}\cC_{1, p} = \Gamma_{W^{1, p}} (S^1, x^* TX  \times \fg),
 \]
 consisting of $W^{1, p}$-sections.  The gauge group for this Banach manifold is 
 the $W^{2, p}$-loop group
 \[
 \cG_{2, p} =W^{2, p}(S^1, G)
 \]
 acting on $\cC_{1, p}$ in the way as  in (\ref{loop:action})  for the smooth case.  Denote
 by $ \cG_{2, p}^0$ the based $W^{2, p}$-loop group. Then the action of $ \cG_{2, p}^0$ on $\cC_{1, p}$ is
 free. 
  
  By the Sobolev embedding theorem, $T_{(x, \eta)}\cC_{1, p}$  is contained in
 the $L^2$-tangent space
 \[
 T^{L^2}_{(x, \eta)}\cC_{1, p} = \Gamma_{L^2} (S^1, x^* TX  \times \fg), 
  \]
  the space of $L^2$-section of the bundle $x^* TX  \times \fg$ on which the 
 $L^2$-inner product (\ref{L^2:inner}) is well-defined and the  $L^2$-gradient $\nabla \cL$ is a
 $L^2$-tangent vector field on $\cC_{1, p}$. 
  Modulo $W^{2, p}$ gauge transformation, the equations (\ref{crit:cC}) is a   first order elliptic equation. By the standard elliptic regularity, we know that modulo gauge transformation, 
 the critical point set $\Crit (\cL)$ consists of smooth loops in $\cC_{1, p}$.  By the same argument,  a solution to the 
 $L^2$ gradient flow equation (\ref{grad:flow}) of $\cL$ on $\cC_{1, p}$  for
 \[
 (x(t), \eta(t)):  [a, b] \longrightarrow \cC_{1, p}
 \]
 with a smooth initial boundary condition
  is gauge equivalent to a smooth 
 symplectic vortex on $S^1\times [a, b]$ in temporal gauge. 
 In this sense, we say that the $L^2$-gradient 
 of $\cL$ and the $L^2$-gradient flow lines are well-defined on  $\cC_{1, p}$. 
 
 Now we explain that  the functional $\cL$ satisfies  certain properties which are analogous to the Morse-Bott 
 properties in  the finite dimension.  
 
 \begin{proposition}\label{Hess:spec}
  Assume that $0$ is a regular value of the moment map $\mu$.  Let $(x, \eta)$ be a critical point of $\Crit (\cL)$, then the Hessian operator  $\Hess_{(x, \eta)}$ of $\cL$ at $(x, \eta)$
 \[
 \Hess_{(x, \eta)}:  T^{L^2}_{(x, \eta)}\cC_{1, p}  \longrightarrow T^{L^2}_{(x, \eta)}\cC_{1, p}
 \]   is an unbounded essentially self-adjoint
operator whose spectrum is real, discrete and unbounded in both directions. 
Moreover,  each non-zero eigenvalue has finite multiplicity, and the tangent space of the $\cG_{2, p}$-orbit through 
$(x, \eta)$, 
$
T_{(x, \eta)} (\cG_{2, p} (x, \eta)) $ is contained in $Ker (Hess_{(x, \eta)}),
$
and its $L^2$-orthogonal is finite dimensional. 
\end{proposition}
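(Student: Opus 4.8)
The plan is to reduce the spectral analysis of the \emph{degenerate} operator $\Hess_{(x,\eta)}$ to that of an associated \emph{elliptic} self-adjoint operator on the closed manifold $S^1$, obtained by augmenting the Hessian with the gauge-fixing operator. Write $L = L_{(x,\eta)}$ for the linearized gauge action $L(\zeta) = (-\tilde\zeta_x, \p_\theta\zeta + [\eta,\zeta])$ on $W^{2,p}(S^1,\fg)$, and $L^*$ for its formal $L^2$-adjoint with respect to (\ref{L^2:inner}); the tangent space to the $\cG_{2,p}$-orbit is $\mathrm{im}\,L$. I would first record two algebraic identities at the critical point. Since $\cL$ is gauge invariant, $\nabla\cL$ vanishes identically along the gauge orbit through $(x,\eta)$; differentiating this in the orbit directions gives $\Hess_{(x,\eta)}\circ L = 0$, i.e. $\mathrm{im}\,L \subseteq \ker\Hess_{(x,\eta)}$, which is the fourth assertion. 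Taking $L^2$-adjoints and using symmetry of $\Hess_{(x,\eta)}$ (Proposition \ref{Hessian}) yields $L^*\circ\Hess_{(x,\eta)} = 0$, so $\mathrm{im}\,\Hess_{(x,\eta)} \subseteq \ker L^*$; thus $\Hess_{(x,\eta)}$ respects the $L^2$-orthogonal splitting $T^{L^2}\cC_{1,p} = \overline{\mathrm{im}\,L}\oplus\ker L^*$ and vanishes on the first summand. Because $L^*L$ has principal symbol $\tau^2\,\mathrm{Id}$ it is elliptic on $S^1$, so $L$ has closed range and the orbit tangent space $\mathrm{im}\,L$ is closed.

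Next I would introduce the extended Hessian
\[
\widehat{\Hess} = \begin{pmatrix} \Hess_{(x,\eta)} & L \\ L^* & 0 \end{pmatrix}
\]
on $\Gamma_{L^2}(S^1, x^*TX\times\fg)\oplus\Gamma_{L^2}(S^1,\fg)$ and compute its symbol in the direction dual to $\p_\theta$. Only $v$ is differentiated in $\Hess_{(x,\eta)}$ (leading term $J\p_\theta v$) and only $\zeta$, resp. $\xi$, in $L$, resp. $L^*$, giving
\[
\sigma_{\widehat{\Hess}}(\tau) = \begin{pmatrix} i\tau J & 0 & 0 \\ 0 & 0 & i\tau \\ 0 & -i\tau & 0 \end{pmatrix}.
\]
For $\tau\neq 0$ this is invertible, since $J$ is an invertible almost complex structure and the antidiagonal $\fg$-block has determinant $-\tau^2\neq 0$; hence $\widehat{\Hess}$ is a first-order elliptic operator, and it is formally self-adjoint because $\Hess_{(x,\eta)}$ is symmetric and the off-diagonal blocks are mutual adjoints. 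Standard elliptic theory on the closed manifold $S^1$ then gives that $\widehat{\Hess}$ is essentially self-adjoint with compact resolvent: real discrete spectrum accumulating only at $\pm\infty$, finite multiplicities, and finite-dimensional kernel.

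I would then transfer these conclusions. The crucial point is that the closed subspace $\ker L^*\oplus\{0\}$ is $\widehat{\Hess}$-invariant, with $\widehat{\Hess}(w,0) = (\Hess_{(x,\eta)}w, 0)$ for $w\in\ker L^*$ (using $\mathrm{im}\,\Hess_{(x,\eta)}\subseteq\ker L^*$), so there $\widehat{\Hess}$ restricts to $\Hess_{(x,\eta)}|_{\ker L^*}$. A restriction of a compact-resolvent operator to a closed invariant subspace again has compact resolvent, so $\Hess_{(x,\eta)}|_{\ker L^*}$ has discrete real spectrum of finite multiplicity accumulating only at $\pm\infty$ and finite-dimensional kernel. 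Since $\Hess_{(x,\eta)}$ vanishes on $\overline{\mathrm{im}\,L}$ and equals this restriction on the complement, its spectrum is $\{0\}\cup\mathrm{spec}(\Hess_{(x,\eta)}|_{\ker L^*})$: discrete, with every nonzero eigenvalue of finite multiplicity, while $0$ carries the infinite-dimensional eigenspace $\mathrm{im}\,L$. Moreover $\ker\Hess_{(x,\eta)} = \mathrm{im}\,L\oplus(\ker\Hess_{(x,\eta)}\cap\ker L^*)$, and the second summand equals $\{w:(w,0)\in\ker\widehat{\Hess}\}$, hence is finite dimensional --- this is the final assertion, identifying the $L^2$-orthogonal of the orbit directions inside the kernel with the finite-dimensional tangent space of the critical orbifold.

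Finally, essential self-adjointness and unboundedness in both directions I would get directly from the splitting $\Hess_{(x,\eta)} = \Hess_0 + K$, where $\Hess_0 = \bigl(\begin{smallmatrix} J\p_\theta & 0\\ 0 & 0\end{smallmatrix}\bigr)$ and $K$ collects the bounded zeroth-order terms. The block $J\p_\theta$ is a symmetric elliptic first-order operator on $S^1$, hence essentially self-adjoint, with Fourier-mode eigenvalues tending to $\pm\infty$; the zero block is trivially self-adjoint and $K$ is bounded symmetric, so Kato--Rellich gives essential self-adjointness of $\Hess_{(x,\eta)}$, and the min--max principle applied to $\Hess_0+K$ forces the spectrum to be unbounded above and below. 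The main obstacle is precisely that $\Hess_{(x,\eta)}$ is \emph{not} elliptic --- the $\fg$-direction carries no derivative --- so everything hinges on (i) checking that augmenting by $(L,L^*)$ exactly repairs the degenerate $\fg$-block of the symbol, which is where the gauge-fixing derivative $\p_\theta$ and the invertibility of $J$ enter, and (ii) carefully matching the spectral data of $\widehat{\Hess}$ with that of $\Hess_{(x,\eta)}$ across the infinite-dimensional gauge kernel so that only $0$ acquires infinite multiplicity. The assumption that $0$ is a regular value of $\mu$ underlies this picture by guaranteeing local freeness of the action and smoothness of the critical orbifold.
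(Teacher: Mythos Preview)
Your argument is correct and complete, but it takes a genuinely different route from the paper. The paper first gauge-reduces to critical points of the normal form $(\exp(\theta\eta)x_0,\eta)$, then \emph{explicitly} solves the kernel equations on the slice orthogonal to the orbit, identifying the answer with $T_{x_0}(\mu^{-1}(0))^g$; finally it asserts that on this slice the Hessian is a compact self-adjoint perturbation of a first-order elliptic operator on $S^1$ and invokes standard elliptic theory. Your approach instead stays at a general critical point, deduces $\mathrm{im}\,L\subseteq\ker\Hess$ abstractly from gauge invariance, and repairs the degenerate $\fg$-symbol by passing to the extended Hessian $\widehat{\Hess}=\bigl(\begin{smallmatrix}\Hess & L\\ L^* & 0\end{smallmatrix}\bigr)$, which is first-order elliptic and self-adjoint; the spectral conclusions are then pulled back along the $\widehat{\Hess}$-invariant subspace $\ker L^*\oplus\{0\}$. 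This extended-Hessian technique is the standard one in Morse--Bott Floer and gauge theory (cf.\ Austin--Braam \cite{ABr}, Donaldson \cite{Don1}); it is more systematic and requires neither the normal form nor the explicit kernel computation. What the paper's approach buys in exchange is the concrete identification of the finite-dimensional piece $\ker\Hess\cap\ker L^*$ with the tangent space $T_{x_0}(\mu^{-1}(0))^g$ of the twisted sector, which feeds directly into the description of $\Crit(\cL)/\cG$ as the inertia orbifold and is used in the sequel. Your proof recovers only the finite-dimensionality of this piece, not its geometric meaning; that is fine for the proposition as stated, but you would need to supplement it with the paper's explicit calculation if you later want the Morse--Bott identification (\ref{ker:=}).
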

\begin{proof}   Note that $\Crit (\cL)$ is invariant under the gauge group $\cG_{2, p}$ and 
the Hessian operator  is  $\cG_{2, p}$-equivariant.   As $0$ is a regular value of $\mu$  and $X$ is compact, there are only finitely many elements of finite order in $G$ with non-empty fixed points in $\mu^{-1}(0)$. By  Lemma \ref{crit:red} and Remark \ref{rmk:crit}, we know that any critical point   is gauge equivalent to
a critical point of the form
\[
(x (\theta), \eta)= (\exp(\theta \eta ) x_0, \eta )  
\]
for $x_0 \in \mu^{-1}(0)$ and  $\eta  \in \fg$ such that $\exp (2\pi \eta ) \in G_{x_0}$ (a finite group).   Therefore, we only need to establish the lemma for 
 $(x(\theta), \eta ) = (\exp(\theta \eta ) x_0, \eta ) \in \Crit (\cL)$ for $\eta \in \fg$. Denote $g=  \exp (2\pi \eta )$.   Note that $ (\exp(\theta \eta ) x_0, \eta )$ is contained in a component of  $\Crit(\cL)$,   whose quotient under the based gauge group $\cG_{2, p}^0$ is diffeomorphic to 
 \[
 (\mu^{-1}(0))^g, 
 \]
under the identification in Lemma \ref{crit:red}.
 
 In this case,  we first  prove  the following  $L^2$-orthogonal decomposition 
 \ba\label{ker:=}
  Ker (Hess_{(x, \eta )}) \cong  T_{(x, \eta )} (\cG^0_{2, p} \cdot (x, \eta ) ) \oplus  T_{x(0)}\big( \mu^{-1} (0)\big)^g .
  \na
 Here $ T_{x(0)}(\mu^{-1} (0))$ is thought as a subspace of $ Ker (Hess_{(x, \eta )}) $ under the identification  in Lemma \ref{crit:red}.  To check (\ref{ker:=}), let 
  $(v, \xi) \in  Ker (Hess_{(x, \eta )})$ be $L^2$-orthogonal to $ T_{(x, \eta_0)} (\cG_{2, p} \cdot (x, \eta ) )$. Then
  $(v, \xi)$ satisfies the following  three equations.
  
  \begin{enumerate}
\item $\disp{\int_{S^1}  }\left(\omega ( -\tilde\zeta_x, v) + \<  \dfrac{\p \zeta}{\p \theta}+ [\eta , \zeta], \xi\>\right) d\theta =0$ for any $\zeta \in W^{2, p}(S^1, \fg)$.\\[3mm]
\item $-L_{\tilde \eta} v+ \tilde \xi_{x(\theta)} =0$.  \\
\item $d\mu_{x(\theta)}(v) =0 \Rightarrow v(\theta) \in   T_{x(\theta)}(\mu^{-1} (0))$.
\end{enumerate}
Due to    the identity $\omega (\tilde \zeta_x, v) = \<d\mu_x (v), \zeta\> =0$  for  $v (\theta) \in  T_{x(\theta)}(\mu^{-1} (0))$ and $\zeta \in W^{2, p}(S^1,  \fg)$, 
 the first equation gives rise to 
\[
\dfrac{\p \xi}{\p \theta}+ [\eta, \xi]=0.\]
This equation  admits a  periodic solution $\xi$  if and only if $ [\eta, \xi] =0$. Hence, $\xi$ is a constant function taking value in the Lie algebra 
\[
\{\xi\in \fg| [\eta, \xi] =0 \}
\]
  of the centraliser  of $g=  \exp (2\pi \eta ) $ in $G$. 
 This implies that $\tilde \xi_{x(\theta)}\in T_{x(\theta)}\big( \mu^{-1} (0)\big)^g$. Then any solution to the second equation is uniquely determined
by an initial value $v(0) \in T_{x(0)}\big( \mu^{-1} (0)\big)^g$.  
 
The subspace of $T_{(x, \eta_0)} (\cC_{1, p} )$ which is  $L^2$-orthogonal   to  $T_{(x, \eta)} (\cG_{2, p} \cdot (x, \eta))$ is given by
\[
\{(v, \xi) | d\mu_x (v) + \dfrac{\p v}{\p \theta} + \tilde \xi_{x}= 0\}, 
\]
on which the Hessian operator is a compact self-adjoint perturbation of a first order elliptic operator 
on $S^1$. The remaining claims in the lemma  follows from the standard elliptic theory on   compact manifolds. \end{proof}

Denote by $\cC^\#_{1, p}$ the submanifold  of $\cC_{1, p}$ consisting of elements with finite stabilisers under the gauge group $\cG_{2, p}$.  Then 
\[
\cB_{1, p}^\# = \cC^\#_{1, p}/\cG_{2, p}
\]
is a smooth Banach orbifold. Let $(x, \eta) \in  \cC^\#_{1, p}$ and let  
\[
\cG_{(x, \eta)} =\{g\in \cG_{2, p}| g\cdot (x, \eta) = (x, \eta) \}
\]
be the stabiliser group of $(x, \eta)$, a finite group in $\cG_{2, p}$.  Then the  tangent space at $\gamma =[x, \eta]  \in \cB_{1, p}^\#$ in orbifold sense  is  a  $\cG_{(x, \eta)}$-invariant  Banach space
\[
 \{ (v, \xi) \in T_{(x, \eta)} \cC_{1, p} | (v, \xi) \text{ is $L^2$-orthogonal to } T_{(x, \eta)}\big( \cG_{2, p} (x, \eta)\big).
 \}
 \]
  The action function $\cL$ descends locally to a circle-valued function on
the Banach orbifold $\cB_{1, p}^\#$.  The $L^2$-gradient vector field  $\nabla \cL$ defines  an orbifold
$L^2$-gradient vector field on $\cB_{1, p}^\#$.  The following corollary follows from
Lemma \ref{crit:red} and  Proposition \ref{Hess:spec}.

\begin{corollary} Assume that  $0$ is a regular value of the moment map $\mu$, then 
 the critical point set 
 \[
 \Cr =\Crit (\cL)/\cG_{2, p} \subset  \cB_{1, p}^\# 
\]
is a smooth orbifold, diffeomorphic to the inertia orbifold of the symplectic reduction 
\[
\cX_0 = [\mu^{-1}(0)/G].
\]
 Each component (called a twisted sector)
is a finite dimensional suborbifold of $\cB_{1, p}^\#$.  
\end{corollary}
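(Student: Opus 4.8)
The plan is to realize the identification of $\Cr$ with $I\cX_0$ in two layers: first as a quotient groupoid, using Lemma \ref{crit:red}, and then as a smooth Banach suborbifold, using the spectral information in Proposition \ref{Hess:spec}. First I would pass from the based gauge group to the full one. Since evaluation at the basepoint gives $\cG_{2,p}/\cG_{2,p}^0\cong G$, we have $\Cr = (\Crit(\cL)/\cG_{2,p}^0)/G$. By Lemma \ref{crit:red} (whose proof applies in the $W^{1,p}$ setting, elliptic regularity forcing each critical point to be gauge-equivalent to a smooth normal form $(\exp(\theta\eta)x_0,\eta)$), the quotient by the based gauge group is $\{(p,g) : p\in\mu^{-1}(0),\ g\in G_p\}$, on which the residual $G$ acts by $h\cdot(p,g)=(hp,hgh^{-1})$. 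This quotient is by construction the inertia groupoid of $[\mu^{-1}(0)/G]$, matching Remark \ref{rmk:crit}.

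Next I would establish that each twisted sector is a finite-dimensional smooth orbifold. Fix a conjugacy class $(g)$. Because $0$ is a regular value and $G$ acts locally freely on $\mu^{-1}(0)$, every stabilizer $G_p$ is finite; in particular $g$ has finite order, so $(\mu^{-1}(0))^g$ is the fixed-point set of the finite cyclic group $\langle g\rangle$ acting by isometries of a $G$-invariant metric, hence a smooth closed submanifold of the finite-dimensional manifold $\mu^{-1}(0)$. The centralizer $C(g)$ preserves $(\mu^{-1}(0))^g$ and acts on it locally freely (its point stabilizers lie in the finite groups $G_p$), so $\cX_0^{(g)} = [(\mu^{-1}(0))^g/C(g)]$ is a finite-dimensional orbifold, and the disjoint union over conjugacy classes recovers $I\cX_0$.

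The substantive step, and the one I expect to be the main obstacle, is to upgrade this set-level bijection to an honest diffeomorphism of orbifolds inside the Banach orbifold $\cB_{1,p}^\#$ — that is, to produce smooth charts for $\Cr$. For this I would invoke the orthogonal decomposition obtained in the proof of Proposition \ref{Hess:spec}, namely $Ker(\Hess_{(x,\eta)}) = T_{(x,\eta)}(\cG_{2,p}^0\cdot(x,\eta))\oplus T_{x_0}(\mu^{-1}(0))^g$. Since Proposition \ref{Hess:spec} gives that the nonzero spectrum is discrete, the Hessian is invertible on the $L^2$-orthogonal complement of its kernel, so $\cL$ is of Bott-Morse type along its critical set; a Lyapunov-Schmidt reduction on the slice transverse to the gauge orbit then yields a smooth chart identifying a neighbourhood of $[x,\eta]$ in $\Cr$ with a neighbourhood of the origin in the finite-dimensional space $T_{x_0}(\mu^{-1}(0))^g$. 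The delicate points are to carry out the gauge-fixing and the reduction $C(g)$-equivariantly, so that the finite stabilizer $\cG_{(x,\eta)}\cong G_{x_0}\cap C(g)$ acts linearly on the chart and the charts glue compatibly with the residual $C(g)$-action; the smoothness of $(\mu^{-1}(0))^g$ and the spectral gap from Proposition \ref{Hess:spec} are exactly what make this reduction work and match the orbifold atlas of $\cX_0^{(g)}=[(\mu^{-1}(0))^g/C(g)]$.
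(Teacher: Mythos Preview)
Your argument is correct and follows exactly the route the paper indicates: the paper gives no detailed proof of this corollary at all, merely stating that it ``follows from Lemma~\ref{crit:red} and Proposition~\ref{Hess:spec}.'' Your proposal is a careful unpacking of precisely that deduction --- using Lemma~\ref{crit:red} and the residual $G$-action to obtain the inertia groupoid set-theoretically, and then the kernel decomposition and spectral gap from Proposition~\ref{Hess:spec} to produce smooth equivariant charts via Lyapunov--Schmidt reduction --- so you are supplying the details the paper omits rather than taking a different approach.
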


 From now on, we assume that  $0$ is a regular value of the moment map $\mu$.   Let  $[x, \eta]\in \Crit (\cL)/\cG_{2, p} $.
  Then  the Hessian operator at $[x, \eta]\in \Crit (\cL)/\cG_{2, p} $  is 
an unbounded essentially self-adjoint Fredholm operator in the orbifold sense
\[
\Hess_{[x, \eta]}:  T^{orb}_{[x, \eta]} \cB_{1, p}^\#  \longrightarrow T^{orb}_{[x, \eta]} \cB_{1, p}^\#. 
\]
with discrete  real spectrum (unbounded in both directions)  of finite multiplicity.  The kernel of
$\Hess_{[x, \eta]}$ is the orbifold tangent space of the critical orbifold at $[x, \eta]$ and the normal
Hessian operator is nondegerate. There is a uniformly lower bound on the absolute value of the non-zero eigebvalues of $\Hess_{[x, \eta]}$ for $[x, \eta]\in \Crit (\cL)/\cG_{2, p} $ as $ \Crit (\cL)/\cG_{2, p} $ is compact. 
 We remark  that the circle valued function $\cL$ on $\cB_{1, p}^\#$ defines a
closed 1-form on $\cB_{1, p}^\#$ whose critical point set is of Morse-Bott type.

In the next lemma, we establish the  inequality  for $\tilde \cL$  which is important in analysing gradient flow lines near any critical point.

\begin{proposition}\label{cru:inequ2}
For any $x$ in a  critical manifold  $\Crit (\tilde \cL) \subset \tilde \cC_{1, p}$, there exist a constant $\delta$ and   a small  $W^{1, p}$ 
$\epsilon$-ball neighbourhood $B_\epsilon(x)$ of $x$ in $\tilde \cC_{1, p}$ such that
 \[
 \| \nabla \tilde \cL  (y) \|_{L^2}^2 \geq \delta | \tilde \cL  (y)- \tilde \cL (x)|
 \]
 for any $y\in B_\epsilon(x)$.  Here $\epsilon$ and $\delta$ are independent of $x$
 (assuming that $\mu^{-1}(0)$ is compact).
   \end{proposition}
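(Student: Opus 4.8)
# Proof Proposal

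\textbf{Strategy.} The inequality I need to prove is a \emph{Łojasiewicz-type inequality} with exponent $\tfrac12$, localized near a critical manifold. The key structural fact at my disposal is Proposition \ref{Hess:spec}: at each critical point the Hessian is essentially self-adjoint with discrete spectrum, its kernel is exactly the tangent space of the critical orbifold, and the nonzero eigenvalues are bounded away from zero uniformly (since $\Crit(\tilde\cL)$ descends to the compact orbifold $\Cr$). The plan is to exploit this spectral gap, combined with a second-order Taylor expansion of $\tilde\cL$ in a Kuranishi-type normal slice, to turn the quadratic behavior of $\tilde\cL$ in the normal directions into the claimed bound.

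\textbf{Key steps.} First I would fix $x\in\Crit(\tilde\cL)$ and work in the $L^2$-orthogonal normal slice to the gauge orbit and to the critical manifold, using the decomposition $\ker(\Hess_x)=T_x(\cG^0_{2,p}\cdot x)\oplus T_{x(0)}(\mu^{-1}(0))^g$ from Proposition \ref{Hess:spec}. For $y=\exp_x(v)$ with $v$ in the normal directions and $\|v\|_{W^{1,p}}<\epsilon$, I would Taylor-expand both the functional and its gradient:
\[
\tilde\cL(y)-\tilde\cL(x)=\tfrac12\langle \Hess_x v,v\rangle_{L^2}+R_1(v),\qquad \nabla\tilde\cL(y)=\Hess_x v+R_2(v),
\]
where $R_1=O(\|v\|^3)$ and $R_2=O(\|v\|^2)$ in the appropriate norms (these bounds follow from smoothness of $\cL$ in the Fréchet/Banach setting and compactness of $X$). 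Let $\lambda>0$ be the uniform lower bound on the absolute values of the nonzero eigenvalues of $\Hess_x$, valid for all $x$ since $\Cr$ is compact. On the normal slice $\Hess_x$ is invertible with $\|\Hess_x v\|_{L^2}\ge \lambda\|v\|_{L^2}$. Hence
\[
\|\nabla\tilde\cL(y)\|_{L^2}^2\ge \tfrac12\lambda^2\|v\|_{L^2}^2 - C\|v\|^4,\qquad |\tilde\cL(y)-\tilde\cL(x)|\le \tfrac12\|\Hess_x\|\,\|v\|_{L^2}^2 + C'\|v\|^3.
\]
Choosing $\epsilon$ small enough that the higher-order terms are dominated by the quadratic terms, both sides are comparable to $\|v\|_{L^2}^2$, which yields the inequality with an explicit $\delta$ depending only on $\lambda$ and the uniform operator norm bound.

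\textbf{Main obstacle.} The delicate point is the \emph{uniformity} of $\epsilon$ and $\delta$ over all of $\Crit(\tilde\cL)$, together with controlling the higher-order remainders in the correct norms. Because the slice analysis is done in $W^{1,p}$ while the gradient bound lives in $L^2$, I must be careful that the estimate $\|\Hess_x v\|_{L^2}\ge\lambda\|v\|_{L^2}$ combined with elliptic estimates does not secretly require a $W^{1,p}$-lower bound on $v$ that degenerates. The resolution is that the normal directions to $\ker(\Hess_x)$ are spanned by eigenfunctions with $|\text{eigenvalue}|\ge\lambda$, and on this subspace the $L^2$ spectral bound suffices for both inequalities above. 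The uniformity then follows from compactness of $\Cr$: the constants $\lambda$, $\|\Hess_x\|$, and the remainder constants $C,C'$ (which depend continuously on $x$ through the geometry of $X$ and the embedding of the critical set) attain uniform bounds over the compact critical orbifold, so a single $\epsilon,\delta$ works for all $x$.
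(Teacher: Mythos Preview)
Your Morse--Bott/\L ojasiewicz approach is in principle sound, but there is a concrete gap in the estimates as written. You bound $|\tilde\cL(y)-\tilde\cL(x)| \le \tfrac12\|\Hess_x\|\,\|v\|_{L^2}^2 + C'\|v\|^3$, but $\Hess_x$ is a first-order differential operator on $S^1$ and hence \emph{unbounded} on $L^2$: the quantity $\|\Hess_x\|$ does not exist as an $L^2\to L^2$ operator norm, and $|\langle\Hess_x v,v\rangle|$ is \emph{not} controlled by a constant times $\|v\|_{L^2}^2$ (in the eigenbasis $\langle\Hess_x v,v\rangle=\sum\lambda_j|c_j|^2$ with $|\lambda_j|\to\infty$). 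So the assertion that ``both sides are comparable to $\|v\|_{L^2}^2$'' fails for the functional side. The repair is to avoid $\|v\|_{L^2}^2$ as the intermediary and compare both sides to $\|\Hess_x v\|_{L^2}^2$ instead: on the normal slice the spectral gap gives $\|\Hess_x v\|_{L^2}^2=\sum\lambda_j^2|c_j|^2\ge\lambda\sum|\lambda_j|\,|c_j|^2\ge\lambda\,|\langle\Hess_x v,v\rangle|$, hence $\|\nabla\tilde\cL(y)\|_{L^2}^2\approx\|\Hess_x v\|_{L^2}^2\ge 2\lambda\,|\tilde\cL(y)-\tilde\cL(x)|$ up to remainders, which are then absorbed via the elliptic estimate $\|\Hess_x v\|_{L^2}\sim\|v\|_{W^{1,2}}$ on the normal slice. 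You also need to say explicitly how a general $y\in B_\epsilon(x)$ is reduced to one in the normal slice (gauge invariance handles the orbit directions; for the critical-manifold directions one replaces $x$ by the nearest critical point, which has the same value of $\tilde\cL$).

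The paper takes a completely different, hands-on route and never invokes the Hessian or its spectrum. It uses gauge invariance to reduce to a critical point of the form $(x_0,0)$ (and a covering trick for nontrivial holonomy), introduces coordinates adapted to the splitting $T_{x_0}X\cong T_{x_0}\mu^{-1}(0)\oplus\{J\tilde\zeta_{x_0}:\zeta\in\fg\}$, and then bounds both sides directly from the explicit formulas $\nabla\tilde\cL(u,\xi)=\bigl(J(\dot u+\tilde\xi_u),\,\mu(u)\bigr)$ and $\tilde\cL(u,\xi)-\tilde\cL(x_0,0)=-\int_{S^1}\tilde u^*\omega+\int_{S^1}\langle\mu(u),\xi\rangle$, obtaining for instance $\|\nabla\tilde\cL(u,\xi)\|_{L^2}^2\ge\|\dot u\|_{L^2}^2+\epsilon\|u_\mu\|_{L^\infty}^2+\epsilon\|\xi\|_{L^\infty}^2$. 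This direct computation uses the regularity of $0$ for $\mu$ in an essential way; the authors themselves remark afterwards that the abstract Morse--Bott argument yields only ``a weaker inequality'', so even once your argument is repaired it is genuinely a different proof with different strengths (more general in spirit, but requiring more delicate norm bookkeeping than you have supplied).
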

   
   We remark that though the above inequality is written in a small  $\epsilon$-ball  of a critical point of $\tilde \cL$ on $\tilde \cC_{1, p}$, in fact  the   inequality still holds in a sufficiently small $\epsilon$-ball    of a critical point of $  \cL$ on $  \cC_{1, p}$. This is due to the local diffeomorphism between $\tilde \cC_{1, p}$ and  $\cC_{1, p}$.  That is, the difference
  function 
$
   \cL  (y)-   \cL (x)
  $
   makes sense for $y\in B_\epsilon(x)$ when $\epsilon$ is small.
  
\begin{proof}  
By the gauge invariance, we only need to verify the inequality at critical points of the form
\[
(\exp(\theta \eta ) x_0, \eta )
\]
for  $x_0 \in \mu^{-1}(0)$ and  $\eta  \in \fg$ such that $\exp (2\pi \eta ) \in G_{x_0}$ (a finite group). 
Assume that $\eta=0$, then a  small neighbourhood of $(x_0, 0)$ in $\tilde \cC_{1, p}$ can be
identified with a small ball in 
\[
T_{(x_0, 0)} \cC_{1, p}  = W^{1, p} (S^1, T_{x_0} X\times \fg)
\]
centred at the origin with radius $\epsilon$ for a sufficiently small $\epsilon$. Let $(u, \xi) \in W^{1, p} (S^1, T_{x_0} X\times \fg)$ such that
\[
\|(u, \xi)\|_{W^{1, p}} < \epsilon.
\]
With respect to the canonical metric  defined by $\omega (\cdot, J( \cdot ))$, we have the following $L^2$-orthogonal decomposition
\[
T_{x_0}X  \cong   T_{x_0} \mu^{-1}(0) \oplus \nu_{x_0}
\]
where $\nu_{x_0} =\{J (\tilde \zeta_{x_0}) |   \zeta \in \fg\}$.  
This decomposition provides   a local coordinate  of $X$ at $x_0$, denoted by $(u_0, u_{\mu})$. Under this coordinate, vector fields will be parallelly transported to the origin along the geodesic rays, can be treated as vectors in $T_{x_0}X $. Now we calculate
\[
\|\nabla\tilde \cL (u, \xi) \|_{L^2}^2 = \int_{S^1}  \left( \left\|J\left(\dfrac{du}{d\theta}+ \tilde \xi_u\right)\right\|^2+\|\mu(u)\|^2 \right)d\theta\]
as follows.  Write  $\dfrac{du}{d\theta} = (\dot{u}_0, \dot{u}_\mu)$, we get the following  estimates
\[ \begin{array}{llll} 
&&\disp {\int_{S^1}  \left\|J\left(\dfrac{du}{d\theta}+ \tilde \xi_u\right)\right\|^2 d\theta } & \\[3mm]
& = & \disp {\int_{S^1}  \| (\dot{u}_0 + \tilde \xi_{x_0})  +   \dot{u}_\mu +  (\tilde \xi_{u} -\tilde \xi_{x_0}) \|^2 d\theta} &  
 \\[3mm]
&\geq  &  \disp{\int_{S^1} \left( \| (\dot{u}_0 + \tilde \xi_{x_0}) \|^2 + \|   \dot{u}_\mu \|^2  - \|\tilde \xi_{u} -\tilde \xi_{x_0}\|^2\right) d\theta}
\qquad  & \text{as } \< \dot{u}_0, \tilde \xi_{x_0}\> = \<d \mu_{x_0} (\dot u_0),  \xi \> =0
 \\[3mm]
&\geq &  \disp{\int_{S^1} \left( \|  \dot{u}_0 \|^2   + \|   \dot{u}_\mu \|^2 +  \|\tilde \xi_{x_0}  \|^2  - C \|u\|^2 \|\tilde \xi_{x_0}\|^2 \right)  d\theta} \qquad &   \text{for some constant $C>0$}
\\[3mm] 
&=&  \disp{\int_{S^1}  \left( \|  \dot{u} \|^2  + (1-C\|u\|^2)  \|\tilde \xi_{x_0}\|^2 \right)d\theta }.  
\end{array}
\]
Here $\< \dot{u}_0, \tilde \xi_{x_0}\> = \<d \mu_{x_0} (\dot u_0),  \xi \> =0$ is applied in the calculation. 
Note that 
\[
\int_{S^1} \|\mu (u)\|^2  d\theta \geq \epsilon \|u_\mu\|^2_{L^\infty}, 
\]
for a sufficiently small $\epsilon$. 
Hence, we obtain
\[
\|\nabla\tilde \cL (u, \xi)\|_{L^2}^2 \geq \|\dot u\|^2_{L^2} + \epsilon \|u_\mu\|^2_{L^\infty} + \epsilon \|\xi\|_{L^\infty}^2.
\]
On the other hand, let $\tilde u(\theta, t) = tu(\theta)$ for $t\in [0, 1]$, then
\[
\tilde \cL(u, \xi) - \tilde \cL(x_0, 0) = - \int_{S^1} \tilde u^* \omega + \int_{S^1} \< \mu (u), \xi\>d\theta.
\]
If $\int_{S^1} u_0 d\theta =0 \in T_{x_0} \mu^{-1}(0)$, then
\[
| \int_{S^1} \< \mu (u), \xi\>d \theta | \leq C_1 (\| u_\mu\|^2_{L^\infty} + \|\xi\|^2_{L^\infty}\|)
\]
for some constant $c_1>0$. By a direct calculation,  we know that
\[
| \int_{S^1} \tilde u^* \omega |   \leq C_2 \|u\|_{L^\infty} \int_{S^1} |\dot u| d\theta \leq 
C_3 \|u\|_{W^{1, 2}}, 
\]
for some constants $C_2$ and $C_3$.  
Therefore, for a properly chosen $\delta>0$  and sufficiently small $\epsilon$, we have
\[
\|\nabla\tilde \cL (u, \xi)\|_{L^2}^2 \geq \delta |\tilde \cL(u, \xi) - \tilde \cL(x_0, 0)|.
\]
If   $\int_{S^1} u_0 d\theta \neq 0$, we can replace $x_0$ and $x_0' = x_0 + \int_{S^1} u_0 d\theta $. Then we have
\[
 \tilde \cL(x_0, 0) =  \tilde \cL(x_0', 0).
 \]
 The above calculation applied to $(x_0' , 0) $ implies 
 \[
\left\|\nabla\tilde \cL (u, \xi)\right\|_{L^2}^2 \geq \delta |\tilde \cL(u, \xi) - \tilde \cL(x'_0, 0)|.
\]
So the inequality has been proved for any critical point  which is gauge equivalent to $(x_0, 0)$. 
The above argument can be  adapted for  a   critical point    gauge equivalent to $(\exp(\theta \eta_0) x_0, \eta )$ by lifting technique:
since $Hol(\eta)$ is of finite order, say $k$, then we consider a 
$k$ covering $S^1\to S^1$ and lift $(\exp(\theta \eta_0) x_0, \eta )$ to the first $S^1$. Let $\tilde\eta$ be the lifting of $\eta$ and then $Hol(\tilde\eta)$ is trivial.  Then we can use the above argument to get the estimate.
\end{proof}

\begin{remark}
 In the proof of Proposition \ref{cru:inequ2}, we  use directly  the explicit expression of $\tilde \cL$ and $\nabla\tilde\cL$ in the case that $0$ is  a regular value of $\mu$. If $0$ is irregular, then the argument  in the proof does not apply. We can  apply the Morse-Bott property of the  functional $\tilde \cL$ to establish   a weaker   inequality. In this paper,
 we only deal with the case when $0$ is  a regular value of $\mu$,  so we prefer to use the stronger inequality given by Proposition \ref{cru:inequ2}. 
  \end{remark}

At the end of this subsection, we discuss the energy of gradient flow line.
Let $\gamma = (\tilde x,\tilde \eta) :  [a, b] \to \cC_{1, p}$ be a path connecting
$(x_1,\eta_1)$ and $(x_2,\eta_2)$. Let $(\tilde x_1,\tilde \eta_1)$
be a path $\gamma_1$ connecting the based point to $(x_1,\eta_1)$; then we set $(\tilde x_2,\tilde \eta_2)$ be the path 
$\gamma_2=\gamma_1\sharp\gamma$.  As in Remark \ref{cover}, we treat $(\tilde x_1, \tilde \eta_1)$ and $(\tilde x_2,\tilde \eta_2)$
as elements in $\tilde\cC$.

\begin{lemma} \label{energy}
Suppose that $\gamma= \gamma(t): [a, b] \to \cC_{1, p}$  
is a gradient flowline of $\cL$. 
Then the following  quatiyare equal:
\begin{enumerate}
\item $\tilde{\mc L}(\tilde x_1,\tilde\eta_1)-\tilde{\mc L}(\tilde x_2,\tilde\eta_2)$
\item  $ -  \disp{\int_{S^1\times [a, b] } \tilde x^*\omega + \int_{S^1} (\<\mu(x_1 ), \eta_1 \> -  
\<\mu(x_2), \eta_2 \>d\theta} $
\item the Yang-Mills-Higgs energy $E(\tilde x,\tilde \eta)$;
\item $ \disp{\int_a^b} \left\|\dfrac{\p \gamma (t)}{\p  t} \right\|^2_{L^2} dt.$
\end{enumerate}
\end{lemma}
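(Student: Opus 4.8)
The plan is to prove the chain $(1)=(2)=(3)=(4)$ by identifying the four quantities one at a time, the only genuinely computational link being $(3)=(4)$. I would organise the argument so that $(1)=(2)$ is pure unwinding of the definition of $\tilde\cL$, $(2)=(3)$ is the energy identity, and $(1)=(4)$ (equivalently $(3)=(4)$) is the gradient-flow tautology.

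First I would establish $(1)=(2)$ from the construction of $\tilde\cL$. By definition $\tilde\cL(\tilde x_i,\tilde\eta_i)=E(\gamma_i)$, the path energy \eqref{energy:path} of the chosen path $\gamma_i$ from the base point to $(x_i,\eta_i)$, and $\gamma_2=\gamma_1\sharp\gamma$. Since the integrand of $E$ is additive over the concatenated cylinder, the difference $\tilde\cL(\tilde x_1,\tilde\eta_1)-\tilde\cL(\tilde x_2,\tilde\eta_2)$ equals the energy of the segment $\gamma$ alone (with the orientation convention fixed in Lemma \ref{E_property}); inserting the closed expression for $E(\gamma)$ recorded just after \eqref{energy:path} yields precisely expression $(2)$. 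This step uses nothing beyond Lemma \ref{E_property} and the explicit formula for $E$.

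Next, $(2)=(3)$ is immediate: expression $(2)$ is literally $E(\gamma)$ in the sense of \eqref{energy:path}, and by the Remark following \eqref{grad:flow} a gradient flow line of $\cL$ is a symplectic vortex on $S^1\times[a,b]$ written in temporal gauge. For a vortex the energy identity \eqref{energyidentity2} asserts that $E(\gamma)$ coincides with the Yang--Mills--Higgs energy $E(\tilde x,\tilde\eta)$, which is $(3)$.

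The substantive step is $(1)=(4)$, which together with the above closes the lemma. I would give the gradient-flow computation: along $\gamma(t)$ one has $\tfrac{d}{dt}\tilde\cL(\gamma(t))=\langle\nabla\cL(\gamma(t)),\dot\gamma(t)\rangle_{L^2}$, and since $\dot\gamma=-\nabla\cL$ this equals $-\|\dot\gamma(t)\|_{L^2}^2$; integrating over $[a,b]$ returns $\tilde\cL(\gamma(a))-\tilde\cL(\gamma(b))=\int_a^b\|\dot\gamma\|_{L^2}^2\,dt$, which is $(4)$. As a direct cross-check of $(3)=(4)$ I would compute the energy density pointwise in temporal gauge, where $|d_Au|^2=|\p_t x|^2+|\p_\theta x+\tilde\eta_x|^2$ and $|F_A|^2=|\p_t\eta|^2$; feeding the two equations \eqref{vortex:cyl} into the density in \eqref{YMH:energy}, and using that $J$ is an isometry so $|\p_t x|^2=|\p_\theta x+\tilde\eta_x|^2$ and that $\p_t\eta=-\mu(x)$ so $|F_A|^2=|\mu(x)|^2$, collapses the four terms into $|\p_t x|^2+|\p_t\eta|^2$, whose $S^1$-integral is $\|\dot\gamma\|^2$ with respect to \eqref{L^2:inner}. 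The only delicate point is the bookkeeping of signs and orientations: one must make the concatenation convention of Lemma \ref{E_property}, the sign in the gradient \eqref{grad:cC}, and the downward direction of the flow mutually consistent so that all four quantities emerge with the same nonnegative value. No analytic input is needed beyond the energy identity \eqref{energyidentity2} and the identity $\tfrac{d}{dt}\tilde\cL=-\|\nabla\cL\|_{L^2}^2$ along a downward gradient flow line.
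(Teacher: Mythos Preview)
Your proposal is correct and follows essentially the same route as the paper: the paper also proves $(1)=(2)=(3)$ via the energy identity \eqref{energyidentity2} for a vortex in temporal gauge, and $(1)=(4)$ via the gradient-flow identity $\tfrac{d}{dt}\tilde\cL(\gamma(t))=\langle\nabla\cL,\dot\gamma\rangle$. Your treatment is in fact slightly more careful with the sign bookkeeping (the paper's written computation of $(1)=(4)$ has a sign slip), and your pointwise cross-check of $(3)=(4)$ in temporal gauge is an extra verification the paper does not include.
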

\begin{proof}
Recall that a path $\gamma = (x, \eta)$ is a gradient flow line of $\nabla \cL$  if it satisfies the equations
\[
\dfrac{\p}{\p t} \left(x(t), \eta(t) \right)  = -  \left( J( \dfrac{\p x}{\p \theta} + \tilde \eta_x ),  \mu(x) \right).
\]
Since $(\tilde x,\tilde \eta)$ solves the symplectic vortex equation, 
$$
E(\tilde x,\tilde \eta)=
-\int_{S^1\times [a,b]} \tilde x^\ast\omega+d\langle\mu(\tilde x),\tilde \eta\rangle.
$$
This implies that (1)=(2)=(3). Now we show that (1)=(4).
$$
\tilde{\mc L}(\tilde x_2,\tilde\eta_2)-\tilde{\mc L}(\tilde x_1,\tilde\eta_1)=\int_{a}^b\frac{d}{dt}\mc L(\tilde x(t),\tilde \eta(t))dt
=\int_a^b\<\nabla\mc L, \frac{d\gamma}{dt}\>dt= \int_a^b \left\|\dfrac{\p \gamma (t)}{\p  t} \right\|^2_{L^2} dt.$$
\end{proof}

  \subsection{Asymptotic behaviour of finite energy  symplectic vortices on a cylinder} \ 
  
  In   this subsection, we   establish  the existence of a limit point for any gradient flow line 
  \[
  \gamma: [0, \infty) \to \cC_{1, p}
  \]
 with finite  energy $E(\gamma)$.
   Then by Lemma \ref{energy}, we have
   \ba\label{equ:1}
   \int_0^\infty \left\|\dfrac{\p \gamma (t)}{\p  t} \right\|^2_{L^2} dt = 
    \int_0^\infty \|
   \nabla \cL (\gamma (t))   \|^2_{L^2} dt = E(\gamma) < \infty.
\na

     \begin{theorem}\label{decay:exp} Let  $\gamma: [0, \infty) \to \cC_{1, p}
 $ be a gradient flow line of $\cL$   with finite  energy. Then there exists a unique critical point 
 $(x_\infty, \eta_\infty) \in \Crit (\cL)$ and  constants $\delta, C>0$ such that the $L^2$-distance 
 \[
 \dist_{L^2} \big( \gamma (T), (x_\infty, \eta_\infty)\big) \leq C e^{-\delta T}
 \]
  for  any sufficient large $T$. 
     \end{theorem}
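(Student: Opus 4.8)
The plan is to run the standard \L{}ojasiewicz--Simon argument for gradient flows, using Proposition \ref{cru:inequ2} as a \L{}ojasiewicz inequality with exponent $1/2$. Such an inequality is exactly what is needed to upgrade mere subconvergence into full convergence and, because the power of $|\tilde\cL(y)-\tilde\cL(x)|$ on the right is $1$, to produce an exponential rate.

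First I would establish subconvergence and pin down a candidate limit. By Lemma \ref{energy} and the finite-energy hypothesis \eqref{equ:1}, $\int_0^\infty \|\nabla\cL(\gamma(t))\|_{L^2}^2\,dt = E(\gamma) < \infty$, so there is a sequence $t_n \to \infty$ with $\|\nabla\cL(\gamma(t_n))\|_{L^2} \to 0$. Viewing $\gamma$ on each slab $S^1\times[t_n-1,t_n+1]$ as a symplectic vortex, interior elliptic estimates for the vortex equation together with the compactness of the critical orbifold $\Crit(\cL)/\cG_{2,p}$ let me extract a subsequence converging, after gauge fixing, in $W^{1,p}$ to a critical point $(x_\infty,\eta_\infty)$. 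Lifting to the cover $\tilde\cC_{1,p}$, the function $\tilde\cL(\gamma(t))$ is non-increasing along the downward flow since $\tfrac{d}{dt}\tilde\cL(\gamma(t)) = -\|\nabla\cL\|_{L^2}^2 \le 0$, and being bounded below along the convergent subsequence it has a limit $\ell = \tilde\cL(x_\infty)$. Set $f(t) = \tilde\cL(\gamma(t)) - \ell \ge 0$.

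Second comes the trapping step, where I confine the whole trajectory to the uniform $\epsilon$-ball $B_\epsilon(x_\infty)$ of Proposition \ref{cru:inequ2}. Inside this ball the inequality $\|\nabla\tilde\cL\|_{L^2}^2 \ge \delta f$ combines with $\dot f = -\|\nabla\cL\|_{L^2}^2$ to control arc length: since $\|\dot\gamma\|_{L^2} = \|\nabla\cL\|_{L^2} \le \|\nabla\cL\|_{L^2}^2/\sqrt{\delta f} = -\dot f/\sqrt{\delta f} = -\tfrac{2}{\sqrt\delta}\,\tfrac{d}{dt}\sqrt f$, the length of $\gamma|_{[T,\infty)}$ is bounded by $\tfrac{2}{\sqrt\delta}\sqrt{f(T)}$ so long as the trajectory stays in the ball. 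Choosing $n$ large enough that $\dist_{L^2}(\gamma(t_n),x_\infty) < \epsilon/2$ and $\tfrac{2}{\sqrt\delta}\sqrt{f(t_n)} < \epsilon/2$, a continuation argument shows $\gamma(t)$ cannot exit $B_\epsilon(x_\infty)$ for $t \ge t_n$: if it first reached the boundary at some $T'$, the arc-length bound would force $\dist_{L^2}(\gamma(T'),x_\infty) < \epsilon$, a contradiction. Hence $\gamma(t) \to (x_\infty,\eta_\infty)$, and the limit is unique since the trajectory is eventually confined to one small ball.

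Finally the exponential rate follows from confinement. For $t \ge t_n$ the inequality $\dot f = -\|\nabla\cL\|_{L^2}^2 \le -\delta f$ integrates to $f(t) \le f(t_n)e^{-\delta(t-t_n)}$, and inserting this into the arc-length bound gives $\dist_{L^2}(\gamma(T),(x_\infty,\eta_\infty)) \le \int_T^\infty \|\dot\gamma\|_{L^2}\,dt \le \tfrac{2}{\sqrt\delta}\sqrt{f(T)} \le C e^{-\delta T/2}$ for all large $T$, which is the asserted estimate after renaming $\delta/2$ as $\delta$. I expect the main obstacle to be the trapping step together with the gauge-theoretic bookkeeping: one must use the constants $\delta,\epsilon$ of Proposition \ref{cru:inequ2} uniformly across the varying base point, keep the lift to $\tilde\cC_{1,p}$ and the temporal gauge consistent along the entire half-line, and ensure that the initial subsequence genuinely limits onto a critical point with no energy escaping near infinity, which rests on the compactness of $\Crit(\cL)/\cG_{2,p}$ and interior elliptic regularity for the vortex equation.
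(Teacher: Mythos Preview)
Your proposal is correct and follows the same \L{}ojasiewicz--Simon strategy as the paper: subconvergence to a critical point via elliptic regularity and Uhlenbeck compactness, trapping in $B_\epsilon$ via the arc-length bound derived from Proposition~\ref{cru:inequ2}, and then exponential decay from the differential inequality $\dot f \le -\delta f$. The only notable difference is in the execution of the trapping step---the paper argues by contradiction, invoking a second compactness extraction on the exit-time sequence $\gamma(s_i)$, whereas your direct continuation argument is cleaner; the paper is in turn more explicit about applying a single gauge transformation $g_i$ to the entire flow line and then redefining $y_\infty$ as $g_i^{-1}y_\infty$ before trapping, which is exactly the gauge bookkeeping you rightly flag as the main subtlety.
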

    \begin{proof}   {\bf Step 1.}  For any sequence $\{\gamma (t_i) | \lim_{i\to \infty} t_i =\infty\}$, we show that  there exists 
    a convergent  subsequence, still denoted by $\{\gamma (t_i)\}$,  such that up to gauge transformations in
    $\cG_{2, p}$,  the sequence $\{\gamma (t_i)\}$ converges to   a critical point $y_\infty$ of $\cL$ in the  $C^\infty$-topology.   
    
    Let $(u_i, A_i) = \gamma(t)$ be the symplectic vortex on $S^1\times [-1, 1]$ in temporal gauge, obtained from 
    $\gamma:  [ t_i -1 , t_i +1 ] \to \cC_{1, p}$. Then we have 
   \[
   \lim_{i\to \infty} E(u_i, A_i) =0,
   \]
where  the energy $E(u_i, A_i)$ agrees with the Yang-Mills-Higgs energy
   \[
   E(u_i, A_i) = \int_{S^1\times [-1, 1]} \dfrac 12 \left( |d_{A_i} u_i|^2 + |F_{A_i}|^2 + |\mu(u_i)|^2\right) d\theta d t.
   \]
   Applying the standard regularity result and Uhlenbeck compactness, see Theorem 3.2 in \cite{CGRS}, there exists a sequence of  $W^{2, p}$-gauge transformations $g_i$ of $P_{S^1}\times [-1, 1]$ such that the sequence 
   \[
   g_i \cdot (u_i, A_i)
   \]
    has a $C^\infty$-convergent subsequence. Let $(u_\infty, A_\infty)$ be the limit, then $(u_\infty, A_\infty)$
    satisfies the following equations
    \ba\label{equ:end}
    F_{A_\infty } =0 , \qquad 
  d_{A_\infty} u_\infty =0, \qquad 
   \mu(u_\infty) =0.
      \na
   We can find a smooth gauge transformation $h$ of $P_{S^1}\times [-1, 1]$ such that 
 $ h \cdot (u_\infty, A_\infty)$ is in temporal gauge. So   we can write 
 \[
 h \cdot (u_\infty, A_\infty) = (x_\infty(\theta, t), \eta_\infty (\theta, t) d\theta)
 \]
as   a path in $\cC_{1, p}$. Then  the equations (\ref{equ:end}) become 
 \[ \left\{ \begin{array}{lll}
 \dfrac{\p \eta_\infty(\theta, t)}{\p  t} =0,    \dfrac{\p x_\infty(\theta, t)}{\p  t} =0
  \\[3mm]
    \dfrac{\p x_\infty}{\p \theta}
 + \tilde \eta_{\infty} (x_\infty) =0,  \mu (x_\infty) =0. &&
 \end{array}\right.
 \]
 These equations  imply that $(x_\infty, \eta_\infty) =  h \cdot (u_\infty, A_\infty) \in \Crit (\cL)$ and 
 \[
 \lim_{i\to \infty} (hg_i) \cdot (u_i, A_i) = (x_\infty, \eta_\infty) 
 \]
 in the  $C^\infty$-topology.  Hence, up to gauge transformations in
    $\cG_{2, p}$,  the subsequence $\{\gamma (t_i)\}$ converges to   a critical point $(x_\infty, \eta_\infty) $ of $\cL$ in the  $C^\infty$-topology.  We denote it by $y_\infty$.
    
  {\bf Step 2. } Set $\gamma^i=g_i\gamma$. We claim that there exists $i$ such that $\gamma^i(t)\in B_\epsilon(y_\infty)$ for $t\geq t_i$. Here $\epsilon$ is the constant given in Proposition \ref{cru:inequ2}.
  
If not, for each $i$ there exists $s_i>t_i$ such that the path $\gamma^i(t), t_i\leq 
t\leq s_i$ locates in $B_\epsilon(y_\infty)$ and $\gamma^i(s_i)
\in\partial B_\epsilon(y_\infty)$. 
Then
\begin{eqnarray*}
\mathrm{dist}_{L^2}(\gamma^i(t_i),\gamma^i(s_i))
&\leq &\int_{t_i}^{s_i}\|\frac{\partial \gamma^i(t)}{\partial t}\|_{L^2}dt
=\int_{t_i}^{s_i}\|\nabla(\mc L(\gamma^i(t)-\mc L(y_\infty))\|_{L^2}dt\\
&\leq& 
-2\delta^{-1}\int_{t_i}^{s_i}\frac{d}{dt}((\mc L(\gamma^i(t)-\mc L(y_\infty))^{1/2})\\
&\leq &2\delta^{-1}((\mc L(\gamma^i(t_i)-\mc L(y_\infty))^{1/2}-(\mc L(\gamma^i(s_i)-\mc L(y_\infty))^{1/2}).
\end{eqnarray*}
As $i\to \infty$, this goes to 0.  On the other hand, by Step 1, there exists $h_i$
such that $h_i\gamma^i(s_i)$ uniform converges to some critical point $y'_\infty$.  Since $\gamma^i(s_i)\in B_\epsilon(y_\infty)$
and $h_i\gamma^i(s_i)$ uniformly converges to $y_\infty'$, $h_i$
is uniformly bounded at least in $C^{1,\alpha}$ for some $\alpha>0$. This means that there exists a subsequence of $h_i$ that converges. We may relabel the sequence and assume that $h_i$ converges to $h$. We conclude that $\gamma^i(s_i)$ converges to $h^{-1}y_\infty'$. Therefore,
$$
\dist_{L^2}(\gamma^i(t_i),\gamma^i(s_i))\to 
\dist_{L^2}(y_\infty,h^{-1}y_\infty') =0.
$$
This implies that $y_\infty=h^{-1}y_\infty$. However, $\gamma^i(s_i)$
is on the boundary of the ball $B_\epsilon(y_\infty)$, this is impossible. The contradiction implies Step 2.

  {\bf Step 3:} From Step 2, suppose that $\gamma^i(t)$ locates in 
  $B_\epsilon(y_\infty)$ when $t$ large. Reset $y_\infty$ to be $g^{-1}_iy_\infty$. Then we may assume that  $\gamma(t)$ locates in $B_\epsilon(y_\infty)$ when $t$ large. Now we show that 
  $$
  \mathrm{dist}_{L^2}(\gamma(t),y_\infty)\leq Ce^{-\delta t}
  $$
  for $t$ large.
  
We can assume that for $ t>T_0$,  $\gamma (t)\in B_\epsilon ( y_\infty)$ so that the crucial inequality in Proposition \ref{cru:inequ2}  can be applied to
 get 
 \[
 \dfrac{d \big(    \cL ( \gamma (t))-    \cL ( y_\infty)\big)^{1/2}}{dt}  = -\|\nabla  ( \cL ( \gamma (t))\|_{L^2} 
  \leq -\delta \big(\cL ( \gamma (t))-     \cL (  y_\infty)\big)^{1/2}.
  \]
Hence, for any $t>T_0$, we have 
 \[
   \cL  ( \gamma (t)) -  \cL  ( y_\infty)    \leq 
  e^{-\delta (t-T_0)}  \big( \cL  ( \gamma (T_0)) -  \cL  ( y_\infty)  \big).
  \]
 That is,  for $t>T_0$
  \[
  \dist_{L^2} \big( \gamma (t),  y_\infty\big) \leq 2c e^{-\delta(t-T_0)}  \big(  \cL  ( \gamma (T_0)) -\cL  (  y_\infty)  \big)^{1/2}.
  \]
  Take $C= 2c e^{T_0}   \big(  \cL  ( \gamma (T_0)) -\cL  ( y_\infty)  \big)^{1/2}$, we get the exponential decay estimate for $ \dist_{L^2} \big( \gamma (t),  y_\infty \big)$.
       \end{proof}
    
     \begin{remark} By a similar calculation  as   the proof, one can establish the following exponential decay for a  finite energy gradient flow line $\gamma:  [0, \infty) \to \cC_{1, p}$. That is,  there exist  constants $\delta, C>0$ such that 
     \[
     \disp{\int_{T}^{\infty}  }  \|\nabla \cL (   \gamma (t))\|^2_{L^2} dt  
  \leq Ce^{-\delta T}
      \]
      for a sufficiently large $T$. Moreover, let $y_\infty$ be the limit of $\gamma (t)$ at infinity, by a gauge transformation, we may assume that $y_\infty\in \Cr$, then for any
      $k\in \N$, there exist $C, \delta >0$ such that
      \ba\label{ell:estimate}
      |\nabla^k \gamma (t) | \leq C e^{-\delta t}
      \na
      for $t$ sufficiently large.  To get the above point-wise estimate, we apply the elliptic regularity to
  the symplectic vortex    $\gamma |_{ [T-2, T+2]\times S^1} $ for a sufficiently large $T$ to get
  a $C^k$-estimate
  \[
  \| g\cdot \gamma \|_{C^k} \leq C.
  \]
  for  some constant $C>0$ and any $k\in \N$. Write $\gamma = (\alpha, u)$, then the curvature
  $F_\alpha$ and $\mu(u)$ are  gauge invariant and hence bounded.  Then (\ref{ell:estimate}) follows from 
  applying the standard elliptic estimates to the gradient flow equations.  We also remark that the decay rate $\delta$ can be chosen such that $\delta$ is smaller than  the minimum  absolute value of
 non-zero eigenvalues of the Hessian operator of $\cL$ at $y_\infty$. 
      \end{remark}

 \section{$L^2$-moduli space of   symplectic vortices on a cylindrical  Riemann surface}

  In this section, we consider the symplectic vortices of finite energy  on a Riemann surface $\Sigma$  with cylindrical end. For simplicity, $\Sigma$ is assumed to have just one end, isometrically diffeomorphic to a half cylinder $S^1 \times [0, \infty)$. Let $K$ be a compact set of $\Sigma$ such that $\Sigma \backslash K$ is  isometrically diffeomorphic to  $S^1\times (1, \infty)$ with the flat metric $d\theta^2 + dt^2$.  Let $P$ be a principal $G$-bundle over $\Sigma$ and $\cN_{\Sigma}(X, P)$ be the moduli space of symplectic vortices with  finite energy  associated to
  $P$ and a closed Hamiltonian manifold $(X, \omega)$.  It is the space of gauge equivalence classes of 
  \[
  (A, u) \in \cA(P) \times C^\infty_G (P, X)
  \]
  satisfying the symplectic vortex equations  (\ref{sym:vortex}) and with the property 
that the Yang-Mills-Higgs energy  (Cf. (\ref{YMH:energy})) is finite. The main result of this section is to prove  the  continuity for the asymptotic limit map established in Section \ref{3} 
  \[
  \p_\infty:  \cN_{\Sigma}(X, P)\longrightarrow \Cr,
  \]
  where $\Cr$ is the critical point set modulo the  gauge transformations. We remark that
  $\Cr$ is  diffeomorphic to the inertia orbifold $I\cX_0$ associated to the reduced symplectic orbifold $\cX_0 = [\mu^{-1}(0)/G]$, as we assume that $0$ is a regular value of the moment map $\mu$. 
   
To study the moduli space $\cN_{\Sigma}(X, P)$, we consider the  $W^{1, p}$-space
\[
\widetilde \cB_{W^{1, p}_\loc (\Sigma)} = \cA_{W^{1, p}_\loc(\Sigma)} \times W^{1, p}_{\loc, G}(P, X)
\]
for $p\geq 2$. Then by the elliptic regularity,  $\cN_{\Sigma}(X, P)$ 
is the space of solutions to the symplectic vortex equations  (\ref{sym:vortex}) for
$(A, u) \in\widetilde  \cB_{W^{1, p}_\loc(\Sigma)}$ such that
\[
E(A, u) = \int_{\Sigma} \dfrac 12 (|d_Au|^2 + |F_A|^2 + |\mu\circ u|^2) \nu_\Sigma <  \infty
\] 
modulo the  the action of the  group $\cG_{W_{loc}^{2, p}(\Sigma)}$ of all  $W^{2, p}_\loc$ gauge transformations.

   \begin{proposition} 
   \label{asymp:map} Let $\Sigma$ be a Riemann surface $\Sigma$  with  one cylindrical end,   $P$ be a principal $G$-bundle over $\Sigma$ and $\cN_{\Sigma}(X, P)$ be the moduli space of symplectic vortices with  finite energy  associated to
  $P$ and a closed Hamiltonian manifold $(X, \omega)$. Then the asymptotic limit of  symplectic vortices in 
   $\cN_{\Sigma}(X, P)$ define a continuous map
   \[
    \p_\infty:   \cN_{\Sigma}(X, P) \longrightarrow\Cr \cong  I\cX_0.
    \]
   \end{proposition}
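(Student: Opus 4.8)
The plan is to realise $\partial_\infty$ explicitly and then prove continuity by decoupling the behaviour on a fixed compact piece of $\Sigma$ from the behaviour along the cylindrical end, the latter being controlled \emph{uniformly} by the crucial inequality. First I would pin down the map itself: restricting a finite energy vortex $(A,u)$ to the end $S^1\times[0,\infty)$ and passing to temporal gauge produces a finite energy gradient flow line $\gamma:[0,\infty)\to\cC_{1,p}$ of $\cL$, so Theorem \ref{decay:exp} furnishes a unique limit $(x_\infty,\eta_\infty)\in\Crit(\cL)$. Since the leftover gauge freedom acts through $\cG_{2,p}$, this descends to a well-defined point $\partial_\infty(A,u)=y_\infty\in\Cr\cong I\cX_0$.

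For continuity it is enough to treat a convergent sequence $[A_n,u_n]\to[A,u]$. By definition of the $W^{1,p}_\loc$ topology modulo $\cG_{W^{2,p}_\loc}$, after gauge transformations we may assume $(A_n,u_n)\to(A,u)$ in $W^{1,p}_\loc$, and elliptic regularity (as in Step 1 of Theorem \ref{decay:exp}) upgrades this to $C^\infty_\loc$ convergence; in temporal gauge on the end this reads $\gamma_n\to\gamma$ in $C^\infty_\loc$. Write $y_\infty=\partial_\infty(A,u)$ and $y_\infty^n=\partial_\infty(A_n,u_n)$. The key point is that the total Yang--Mills--Higgs energy is determined by the equivariant homology class, $E(A_n,u_n)=\<[\omega-\mu],B\>$, hence constant along the sequence. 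Together with convergence of the energy on each compact piece $K\cup(S^1\times[0,T_0])$, this forces the tail energies to converge: $E(\gamma_n|_{[T_0,\infty)})\to E(\gamma|_{[T_0,\infty)})$ for every fixed $T_0$.

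Now fix the uniform constants $\epsilon,\delta$ of Proposition \ref{cru:inequ2} (uniform since $\mu^{-1}(0)$ is compact). Given a target radius, I would choose $T_0$ so large that $\gamma(T_0)\in B_{\epsilon/4}(y_\infty)$ and $E(\gamma|_{[T_0,\infty)})$ is as small as desired; by the previous paragraph, for $n$ large $\gamma_n(T_0)\in B_{\epsilon/2}(y_\infty)$ and $E(\gamma_n|_{[T_0,\infty)})<\eta_0$ for a prescribed $\eta_0$. Feeding the crucial inequality into the flow exactly as in Step 3 of Theorem \ref{decay:exp}, that is $\|\nabla\cL(\gamma_n)\|_{L^2}\le-\tfrac{2}{\delta}\tfrac{d}{dt}(\cL(\gamma_n)-\cL(y_\infty))^{1/2}$, integration together with Lemma \ref{energy} yields
\[
\dist_{L^2}(\gamma_n(T_0),\gamma_n(s))\le\tfrac{2}{\delta}\big(E(\gamma_n|_{[T_0,\infty)})\big)^{1/2}<\tfrac{2}{\delta}\eta_0^{1/2}
\]
so long as $\gamma_n|_{[T_0,s]}$ stays inside $B_\epsilon(y_\infty)$. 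Taking $\eta_0$ small and rerunning the exit/contradiction argument of Step 2 of Theorem \ref{decay:exp} (an escaping path would again subconverge to a critical point, which is impossible on $\partial B_\epsilon$) shows $\gamma_n$ never leaves $B_\epsilon(y_\infty)$ for $t\ge T_0$; hence $y_\infty^n\in B_\epsilon(y_\infty)$ and $\dist_{L^2}(\gamma_n(T_0),y_\infty^n)<\tfrac{2}{\delta}\eta_0^{1/2}$. The triangle inequality
\[
\dist(y_\infty^n,y_\infty)\le\dist(y_\infty^n,\gamma_n(T_0))+\dist(\gamma_n(T_0),\gamma(T_0))+\dist(\gamma(T_0),y_\infty)
\]
then makes every term small (the middle one by $C^\infty_\loc$ convergence, the last by the exponential decay of $\gamma$), so $y_\infty^n\to y_\infty$ in $\Cr$.

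The hard part will be the interface between the two norms together with the gauge bookkeeping: Proposition \ref{cru:inequ2} measures balls in $W^{1,p}$ whereas the decay is controlled in $L^2$, and the limits $y_\infty^n$ are a priori only defined up to $\cG_{2,p}$. I expect to resolve both exactly as in Theorem \ref{decay:exp}, using elliptic bootstrapping to convert $L^2$ smallness into $W^{1,p}$ (indeed $C^k$) smallness for solutions of the vortex equations, and using compactness of $\Cr\cong I\cX_0$ (valid because $\mu^{-1}(0)$ and $G$ are compact) to select convergent gauge representatives of the limit critical points. Synchronising the gauge transformations that produce the $C^\infty_\loc$ convergence with those identifying the $y_\infty^n$ is the most delicate point of the argument.
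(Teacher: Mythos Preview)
Your approach is correct in spirit but differs from the paper's, and contains one small circularity worth flagging.

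\textbf{Comparison with the paper.} The paper does not argue via sequences at all. It fixes a neighbourhood $V$ of $y_\infty$ in $\Cr$, lifts it to a $\cG_{2,p}(S^1)$-invariant neighbourhood $\tilde\cV\subset\cC_{1,p}$, and simply \emph{declares} $U$ to be the set of vortices whose restriction to $S^1\times[T,\infty)$ is (for some large $T$) gauge-equivalent to an element of $\tilde\cV$; then $\partial_\infty(U)\subset V$ by construction. This is shorter but also sketchier: the paper does not spell out why $U$ is open, nor why membership in $\tilde\cV$ at some large time forces the \emph{limit} to land in $V$. Both of these hidden steps are exactly the trapping argument (Step~2 of Theorem~\ref{decay:exp} via Proposition~\ref{cru:inequ2}) that you make explicit. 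So your sequential proof is really a fleshed-out version of what the paper leaves implicit; it buys rigour at the cost of length.

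\textbf{The circular step.} Your sentence ``the total Yang--Mills--Higgs energy is determined by the equivariant homology class, $E(A_n,u_n)=\langle[\omega-\mu],B\rangle$, hence constant along the sequence'' is not available here. The proposition concerns $\cN_\Sigma(X,P)$, not $\cN_\Sigma(X,P,B)$, and the homology class of $(A_n,u_n)$ is \emph{defined} in Section~4.1 through the orbifold model, which already uses the asymptotic limit $\partial_\infty(A_n,u_n)$ --- precisely the object whose continuity you are proving. So invoking constancy of $B$ (hence of the tail energies) is circular. Fortunately you do not need it: the integral of the crucial inequality with base point $y_\infty$ gives directly
\[
\dist_{L^2}\big(\gamma_n(T_0),\gamma_n(s)\big)\;\le\;\tfrac{2}{\delta}\,\big(\cL(\gamma_n(T_0))-\cL(y_\infty)\big)^{1/2},
\]
and the right-hand side is small because $\gamma_n(T_0)\to\gamma(T_0)$ in $C^\infty$ (so $\cL(\gamma_n(T_0))\to\cL(\gamma(T_0))$) and $\cL(\gamma(T_0))-\cL(y_\infty)=E(\gamma|_{[T_0,\infty)})$ was chosen small. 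Replace your tail-energy estimate by this, and the rest of your argument (exit contradiction, triangle inequality, elliptic bootstrapping for the $L^2$/$W^{1,p}$ interface) goes through unchanged.
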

\begin{proof} Let $[(u, A)] \in   \cN_{\Sigma}(X, P)$ and $\p_\infty([(u, A)] ) = y_\infty$ correspond to
$[x_0]\in (\mu^{-1}(0))^g /C(g)$. Fix  an open neighbourhood  $ V$  of $[x_0] $ in  $(\mu^{-1}(0))^g /C(g)$, a twisted sector in $I\cX_0$. We need to find an open neighbourhood  $ U \subset  \cN_{\Sigma}(X, P)$  of $[(u, A)]$ such that 
\[
\p_\infty ( U) \subset  V.
\]
Let $\tilde \cV$ be a $\cG_{2, p} (S^1)$-invariant  open neighbourhood of $(\exp(2\pi\theta\eta_0)x_0, \eta_0)$ in
$\cC_{1, p}$ such that $\tilde \cV \cap \Crit (\cL)$ is mapped to  a subset of $V$ under the identification 
\[
\Crit(\cL)/\cG_{2, p}(S^1)  \cong I\cX_0.
\]
Denote by $\tilde \cU$ the solutions $(u, A) \in  \widetilde \cB_{W^{1, p}_\loc (\Sigma)} $ to the symplectic vortex equations with finite energy such that for sufficiently large $T$, the restriction of
$(u, A)$ to $S^1\times [T, \infty)$ is gauge equivalent to an element in $\tilde \cV$. Then 
$U= \tilde \cU/ \cG_{W^{2, p}(\Sigma)}$ is an open neighbourhood of $[(u, A)] $ in   $\cN_{\Sigma}(X, P)$ and
$\p_\infty ( U) \subset  V$. 
\end{proof}

 \subsection{Fredholm theory for $L^2$-moduli space of   symplectic vortices}

To understand the moduli space $ \cN_{\Sigma}(X, P)$, we need to introduce the weighted Sobolev space for the fiber of the e asymptotic limit map 
\[
  \p_\infty:   \cN_{\Sigma}(X, P) \longrightarrow \Cr \cong  I\cX_0.
  \]
   Any 
symplectic vortex $[(u, A)]\in  \cN_{\Sigma}(X, P)$ decays exponentially to its asymptotic limit
$  \p_\infty([(u, A)])$ at a rate  $\delta>0$ for some $\delta$ such that $\delta$  is smaller than 
 the  minimum  absolute value of  non-zero eigenvalues of the Hessian operator of $\cL$ at $ \p_\infty([(u, A)])$. 
Note that  $\Cr$ is compact, so we can choose a constant  $ \delta$ such that  
 $[(u, A)]\in  \p_\infty^{-1}(y_\infty)$  decays exponentially to  $y_\infty$ at the rate $\delta$ for any $ y_\infty
 \in   \Cr$.  We fix such a $\delta$ throughout this section.

  Fix a smooth function $\beta:  \Sigma \to [0, \infty)$ such that the follow conditions hold: 
  \begin{enumerate}
\item On $ S^1 \times [1, \infty)$, $\beta$ is the coordinate function on  the cylinder. 
\item $\beta =0$ on $\Sigma \setminus\{ S^1\times [0, \infty)\}$.
\item  $\beta|_{ S^1 \times [0, 1]}$ is an increasing function. 
\end{enumerate}
The weighted  $W^{k,p}$-norm on  a compact support section $\xi$  of an Euclidean vector bundle
$V$ over $\Sigma$ with a covariant derivative $\nabla$ is defined by 
\[
\|\xi\|_{W^{k, p}_\delta} =\left( \int_\Sigma e^{\delta\beta} \big(
| \xi |^p + |\nabla (\xi)|^p  +\cdots + |\nabla^p (\xi)|^p\big) d\nu_\Sigma\right)^{1/p}.
\]
We denote  $W^{k, p}_\delta (\Sigma, E)$ the completion of all  compact support sections  of $E$ with respect to the weighted  $W^{k,p}$-norm,    which is also called the Banach space of $W^{k, p}_\delta$-sections of $E$. When $k=0$,  we simply  denote by $L^p_\delta (\Sigma, E)$ the $W^{0, p}$-sections of $E$. 

Let  $(A_\infty, u_\infty) \in \Crit(\mc L)$. Let
by pulling back, we get a  $(A_0, u_0)$ on cylinder end $S^1\times
[1,\infty)$, namely it is constant in $t\in [1, \infty)$ and agrees with
$(A_\infty, u_\infty)$. 

Define
$ \widetilde \cB_\delta (A_\infty, u_\infty) $ be the subspace of $ \widetilde \cB_{W^{1,p}_{loc}}$ consisting of elements $(A, u)$ with the following property:
\begin{enumerate}
\item $A-A_0 \in W^{1, p}_\delta (S^1\times [1,\infty),   \Lambda^1 \otimes  P^{ad})$,
\item there exists a  sufficiently large $T$  depending on $(A, u)$  such that 
$
u|_{S^1\times [T, \infty)} =\exp_{u_\infty} (v)
$
for  $v \in W^{1, p}_\delta (S^1\times [T, \infty), u_\infty^* TX)$. 
 \end{enumerate}
 Then $\widetilde \cB_\delta (A_\infty, u_\infty)$ is a Banach manifold whose tangent space at $(A, u)$ is
 given by
 \[
 T_{(A, u)} \widetilde \cB_\delta (A_\infty, u_\infty) =  W^{1, p}_\delta (\Sigma,   \Lambda^1 \otimes  P^{ad} \oplus u^*TX).
 \]
 Define
 $$
 \widetilde{\mc B}_\delta=\bigcup_{(A_\infty,u_\infty)} \widetilde{\mc B}_\delta(A_\infty,u_\infty)\to \Cr(\mc L).
 $$
This is  a smooth family of Banach manifold.

 The gauge group $\cG_\delta$  in this setting is defined  as follows.
 For each $g_\infty\in \mc G(P_{S^1})$, we construct a $g_0$ on $S^1\times [1,\infty)$ by pulling back $g_\infty$. Then we define
 $\mc G_\delta(g_\infty)$ similarly using $W^{2,p}_\delta$-norm. 
 Set $\cG_\delta$ be the union of $\mc G_\delta(g_\infty)$.  Set
 $$
 \mc B_\delta=\widetilde{\mc B}_\delta/\mc G_\delta.
 $$
 The symplectic vortex equation (\ref{sym:vortex})  defines a smooth  $\cG_\delta $-invariant  section  $S$ of the $\cG_\delta$-equivariant  Banach bundle   $\widetilde\cE_\delta$ whose fiber at $(A, u)$ is given by
 \[
  \big( \widetilde \cE_\delta (A_\infty, u_\infty)\big)_{(A, u)} = L_\delta^p (\Sigma, \Lambda^{0,1} \otimes  u^*T^\vert Y \oplus    P^{ad})
  \]
where $Y = P\times_G X$, where $(A,u)\in \widetilde{\mc B}_\delta
(A_\infty,u_\infty)$. The moduli space 
$$
\mc N_\Sigma(X,P)=S^{-1}(0)/\mc G_\delta.
$$

 We explain $\partial_\infty^{-1}(y_\infty)$. Let $(A_\infty,u_\infty)$
 be a representative of $y_\infty$. 
  Note that by our assumption,  $(A_\infty, u_\infty) $ has only a finite group as its isotropy group, denoted by $G_\infty$. Then we may define $\cG_\delta  (A_\infty, u_\infty)$
is a disjoint union of $\mc G_\delta(g)$ for $g\in G_\infty$. The connected component of the  identity is the Banach Lie group whose Banach Lie algebra is $W^{2, p}_\delta (\Sigma, P^{ad})$. 

The symplectic vortex equation (\ref{sym:vortex})  defines a smooth  $\cG_\delta  (A_\infty, u_\infty)$-invariant  section  $S_{(A_\infty, u_\infty)}$ of the $\cG_\delta  (A_\infty, u_\infty)$-equivariant  Banach bundle   $\widetilde\cE_\delta$ whose fiber at $(A, u)$ is given by
 \[
  \big( \widetilde \cE_\delta (A_\infty, u_\infty)\big)_{(A, u)} = L_\delta^p (\Sigma, \Lambda^{0,1} \otimes  u^*T^\vert Y \oplus    P^{ad})
  \]
where $Y = P\times_G X$.   The deformation complex associated  to a symplectic vortex $(A, u) \in \widetilde \cB_\delta (A_\infty, u_\infty)$
  is given by
  \[
\xymatrix{
W^{2, p}_\delta (\Sigma, P^{ad})  \ar[r]^{L_{A, u}\qquad \qquad  } &  W^{1, p}_\delta (\Sigma,   \Lambda^1 \otimes  P^{ad} \oplus u^*TX)
 \ar[r]^{\cD_{A, u} }& 
   L_\delta^p (\Sigma, \Lambda^{0,1} \otimes  u^*T^\vert Y \oplus    P^{ad})
   }, 
\]
which is elliptic in the sense that the cohomology groups are finite dimensional.  The proof of this statement is quite standard  nowadays so we omit it here.  See the books \cite{Don1},   \cite{MMR}  and \cite{Tau}.  This ensures that the quotient of
\[
\big(  \widetilde \cB_\delta (A_\infty, u_\infty), \widetilde \cE_\delta (A_\infty, u_\infty), S_{(A_\infty, u_\infty)}\big)
\]
the gauge group  $\cG_\delta  (A_\infty, u_\infty)$  is a Fredholm system. By  the exponential decay result  for  symplectic vortices  in  $\cN_{\Sigma}(X, P)$, we know that 
it  is indeed  a Fredholm system for the fiber of the asymptotic limit map at $(A_\infty,u_\infty)$. 
  The index of this Fredholm system  can be computed by the Atiyah-Patodi-Singer index formula  for elliptic differential operators on manifolds with cylindrical end. The  theorem below  relates this index with the expected dimension for orbifold symplectic vortices.
  
Note that the formal dimension of the moduli space $\cN_{\Sigma}(X, P)$ at a point $[(A, u)]$ is given by 
the  the index of the operator  $\cD_{A, u} \oplus L^*_{A, u}$ associated to  the  $(-\delta)$-weighted  deformation complex.   For the purpose of the calculation,  we can  replace the operator for 
 $(A, u)$ on $\Sigma$ to  by a  suitable operator on an associated orbifold Riemann surface. We construct this replacement as follows. Assume that 
 \[
 \p_\infty (A, u) =  (A_\infty, u_\infty) = (\xi d\theta,\exp(\theta \xi)\cdot  x_\infty)
 \]
where $g =\exp(2\pi \xi)$ has order $m$ and   $x_\infty \in \big(\mu^{-1}(0)\big)^g$. Note that 
$[(A_\infty, u_\infty)]$ determines an element in $ \big(\mu^{-1}(0)\big)^g/C(g) \subset \Cr$, a twisted sector of
the reduced orbifold $\cX_0 = \mu^{-1}(0)/G$.  We can identify the cylinder $ S^1 \times [0,  \infty)$ with a
unit disc $\D^* = \D-\{0\}$ in $\C$ using the coordinate change $(i\theta, t) \mapsto e^{-(t+ i\theta)}$. Then the cylindrical surface 
$\Sigma$ become a punctured Riemann surface. Denote this punctured  Riemann surface  by $\Sigma^*$.  Let $P^*$ be the corresponding principal $G$-bundle over $\Sigma^*$. 

 As the connection $A_\infty$ has a non-trivial holonomy,  $A$ does not extend to a connection $P^*$. 
Consider the degree $m$ covering map 
$
\phi:   \D-\{0\} \longrightarrow  \D-\{0\}
$
defined by $re^{i\theta} \mapsto (re^{i\theta})^m$. Then 
$\phi^* (A_\infty, u_\infty)$, as a rotation invariant symplectic vortex on $\D^*$
is  gauge equivalent to   $(0, x_\infty)$ which is   
a $\Z_m$-invariant symplectic vortex on $ \D$, or extends to a constant symplectic vortex on the 
orbifold $[\D/\Z_m]$ associated to a trivialized principal $G$-bundle. 

Let $\Sigma_{orbi} $ be the orbifold Riemann surface whose underlying topological
space is the closure of $\Sigma^*$.  The punctured point $p$  is treated as an singular point  
locally modelled on $(\D, \Z_m)$ with the action of $\Z_m$ on $\D$ is generated by the multiplication 
$(e^{2\pi i /m},  z)\mapsto   e^{2\pi i /m}  z$. 
Then   the above discussion implies that   $(A, u)$ can be replaced by    a pair $(\tilde A, \tilde u)$ on  the orbifold Riemann surface associated to $(P_{orbi}, X, \omega)$ such that the  chosen trivialization at the orbifold
point $p$ is specified by a based point $\tilde p $ in the fiber of $P_{orbi}$, and 
\[
\tilde u (\tilde p) = x_\infty\in ( \mu^{-1}(0))^g,
\] 
where $g=\exp(2\pi \xi)$ and $x_\infty$  are determined by the asymptotic value 
as above. 
Hence,  $\tilde u$ gives rise to a degree 2 equivariant  homology class  in $H_2(X_G, \Z)$. For simplicity,
we still denote this class by 
$[ u_G]$ which is called the {\bf homology class of $(A, u)$}. Then by a direct calculation,  the energy of $(A, u)$ is 
\[
E(A, u) = \< [\omega -\mu], [ u_G]\>.
\]

Fix an equivariant homology class $B\in H_2(X_G, \Z)$ such that $\< [\omega -\mu], B\> > 0$. Let $\cN_\Sigma (X, P, B)$ be the moduli space of symplectic vortices on $\Sigma$ associated to $(P, X)$ with the homology class $B$. Then
$\cN_\Sigma (X, P, B) \subset \cN_\Sigma (X, P)$ and  the asymptotic limit map in Proposition \ref{asymp:map}
defines  a continuous map on  $\cN_\Sigma (X, P, B)$
\[
\p_\infty: \cN_\Sigma (X, P, B) \longrightarrow \Cr. 
\]

To calculate the expected dimension of components in  $\cN_\Sigma (X, P, B)$ over $(\mu^{-1}(0))^g/C(g)$, we introduce a degree shift as in \cite{CR}. 
 We first  define  the   degree shift  of  an element  $g$ in $G$ of order $m$ acting on $\C^n$.  Let the complex
  eigenvalues of $g$ on $\C^n$ be
  \[
  e^{2\pi i m_1/m},   e^{2\pi i m_2/m}, \cdots,  e^{2\pi i m_n/m}
  \]
 for an $n$-tuple of integers $(m_1, m_2, \cdots, m_n)$ with $0\leq m_j < m$ for $j=1, 2, \cdots, n$. Then 
  the   degree shift  of  an element  $g$ on $\C^n$, denoted by $\iota (g, \C^n)$, is given by
  \[
  \iota (g, \C^n) = \sum_{j=1}^n \dfrac{m_j}{m}.
  \]
  From the definition, we have
  \[
   \iota (g, \C^n)  +   \iota (g^{-1}, \C^n) = n.
   \]
  For the orbifold $\cX_0 = [\mu^{-1}(0)/G]$, if $g\in G$ has a non-empty fixed point set  $(\mu^{-1}(0))^g$, then 
  the Chen-Ruan degree shift of $g$ on $\cX_0$, denoted by $\iota_{CR }(g,  \cX_0)$ at $x  \in (\mu^{-1}(0))^g$,
   is defined to be
   \[
   \iota_{CR}(g,  \cX_0) =  \iota(g, T_{[x]} (\cX_0) ).
   \]
   For a twisted sector $\cX^{(g)}_0$ of $\cX_0$, the corresponding degree shift as in \cite{CR} is
   defined to 
   \[
   \iota_{CR}( \cX_0^{(g)},  \cX_0) =  \iota_{CR}(g,  \cX_0) 
   \]
   for any $g$ such that  $\cX_0^{(g)} $ is diffeomorphic to the orbifold defined by the action of 
   $C(g)$ on the fixed point manifold $ (\mu^{-1}(0) )^{g}$.

  \begin{theorem}\label{APS:index}  Let $ \cN_{\Sigma} (X, P, B;  (g ) )$ be the subset of
   $\cN_{\Sigma} (X, P, B)$ consisting of 
symplectic vortices  $[(A, u)] $ such that 
  \[
  \p_\infty  (A, u) \in    \cX_0^{(g)}    \subset  \Cr 
     \]
Then   $\cN_{\Sigma} (X, P, B; (g ) )$  admits a Fredhom system with its virtual dimension given by 
\[
2\<c_1^G(TX), B\> + 2(n-\dim G) (1-g_{\Sigma}) - 2   \iota_{CR}( \cX_0^{(g)},  \cX_0)
\]
where $g_{\Sigma}$ is the genus of the Riemann surface $\Sigma$. 
 \end{theorem}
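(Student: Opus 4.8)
The plan is to identify the virtual dimension of $\cN_{\Sigma}(X, P, B; (g))$ with the index of the rolled-up operator $\cD_{A, u} \oplus L^*_{A, u}$ associated to the $(-\delta)$-weighted deformation complex, and then to compute this index by an Atiyah--Patodi--Singer argument that we reorganise as an orbifold Riemann--Roch computation. First I would record that the complex is Fredholm: the asymptotic operator of $\cD_{A, u} \oplus L^*_{A, u}$ along the cylindrical end is precisely the Hessian $\Hess_{(x, \eta)}$, which by Proposition \ref{Hess:spec} is essentially self-adjoint with discrete real spectrum, and by the exponential decay Theorem \ref{decay:exp} the weight $\delta$ can be chosen smaller than the least non-zero eigenvalue, so $\pm\delta$ avoids the spectrum and the weighted operator is Fredholm. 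Since the index is invariant under deformations through Fredholm operators and under continuous variation of $(A, u)$ within a connected component of $\cN_{\Sigma}(X, P, B; (g))$, it suffices to evaluate it on any convenient representative, and it depends only on $B$, on the genus $g_{\Sigma}$, and on the twisted sector $(g)$.

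Next I would pass to the closed orbifold Riemann surface $\Sigma_{orbi}$ already constructed above: pulling back by the degree $m$ cover $\phi(re^{i\theta}) = (re^{i\theta})^m$ and extending over $[\D/\Z_m]$ replaces $(A, u)$ with a pair $(\tilde A, \tilde u)$ on $\Sigma_{orbi}$ with $\tilde u(\tilde p) = x_\infty \in (\mu^{-1}(0))^g$. Under this replacement the $(-\delta)$-weighted linearised operator on the cylindrical-end surface becomes a genuine elliptic Cauchy--Riemann type operator on an orbifold complex vector bundle over $\Sigma_{orbi}$, the choice of weight corresponding exactly to the selection of the $\Z_m$-equivariant extension fixing the asymptotic value $x_\infty$. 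After Coulomb gauge fixing the $\fg$-valued summands of the complex cancel in pairs up to compact operators, so the rolled-up operator is homotopic, through Fredholm operators of the same index, to the $\bar\partial$-operator $\bar\partial_E$ on a rank $(n - \dim G)$ orbifold complex bundle $E \to \Sigma_{orbi}$ whose underlying degree is $\<c_1^G(TX), B\>$ and whose isotropy representation at the orbifold point $p$ is the $g$-action on the reduced tangent space $T_{[x]}\cX_0$.

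The index is then supplied by Kawasaki's orbifold Riemann--Roch theorem applied to $\bar\partial_E$. The contribution of the underlying coarse surface is the ordinary Riemann--Roch number $2\<c_1^G(TX), B\> + (n - \dim G)\chi(\Sigma)$, and using $\chi(\Sigma) = 2(1 - g_{\Sigma})$ for the closed surface underlying $\Sigma_{orbi}$ this produces the first two terms of the stated formula. The single orbifold point $p$, modelled on $[\D/\Z_m]$ with isotropy $\Z_m = \<g\>$ acting on the fibre $E_p \cong T_{[x]}\cX_0$, contributes the twisted term, which by Kawasaki's formula is exactly $-2$ times the age of $g$ on $T_{[x]}\cX_0$, that is $-2\iota_{CR}(\cX_0^{(g)}, \cX_0)$ by the very definition of the Chen--Ruan degree shift. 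Assembling the two contributions yields the asserted virtual dimension.

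The main obstacle is the precise bookkeeping at the orbifold point. Two things must be verified carefully. First, that the Atiyah--Patodi--Singer boundary and eta contribution of the weighted cylindrical problem genuinely coincides with the age term coming from Kawasaki's formula; this amounts to matching the spectral data of $\Hess_{(x, \eta)}$, whose small eigenvalues are governed by $-L_{\tilde \eta}$ and by the eigenvalues $m_j/m$ of the $g$-action, with the fractional degrees $m_j/m$ entering the orbifold index. Second, one must justify the reduction from $T_x X$ to the reduced tangent space $T_{[x]}\cX_0$: the $\fg$-gauge directions together with the normal directions $\nu_x = J(\fg\cdot x)$ pair off, and using the orthogonal decomposition $T_x X = T_x\mu^{-1}(0)\oplus \nu_x$ together with the identity $\iota(g, \C^n) + \iota(g^{-1}, \C^n) = n$ one checks that the age contributions of the orbit and normal directions cancel in the rolled-up complex, leaving only the age on $T_{[x]}\cX_0$. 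Carrying out this cancellation equivariantly, so that the answer is the genuine Chen--Ruan shift rather than the age on the ambient $TX$, is the technical heart of the argument.
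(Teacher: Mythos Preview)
Your overall strategy matches the paper's: establish Fredholmness via the spectral properties of the Hessian and the exponential decay, pass to the closed orbifold $\Sigma_{orbi}$ via the $m$-fold cover at the puncture, and compute the index by orbifold Riemann--Roch. The divergence is in how you organise the index computation, and there your reduction step has a gap.

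The paper does \emph{not} reduce to a rank-$(n-\dim G)$ bundle. It decomposes the rolled-up operator as a compact perturbation of the direct sum of two pieces: the gauge operator $(-d^*_A, *d_A)\colon \Omega^1_\delta(\Sigma, P^{ad}) \to \Omega^0_\delta(\Sigma, P^{ad}) \oplus \Omega^0_\delta(\Sigma, P^{ad})$, whose weighted index is $-\dim G(1-2g_\Sigma)$, and the full Cauchy--Riemann operator $\bar\partial_{\tilde A, \tilde u}$ on the rank-$n$ orbifold bundle $\tilde u^* T^{\vert} Y$. After subtracting the residual constant gauge $\dim C(g)$ one obtains
\[
\ind \bar\partial_{\tilde A, \tilde u} - \dim G(1-2g_\Sigma) - \dim C(g).
\]
The orbifold index theorem gives $\ind \bar\partial_{\tilde A, \tilde u} = 2\langle c_1^G(TX), B\rangle + 2n(1-g_\Sigma) - 2\iota_{CR}(g, T_{\tilde u(\tilde p)} X)$, and only \emph{then}, at the single point $\tilde p$ where $\tilde u(\tilde p)\in \mu^{-1}(0)$, does the paper invoke the splitting $T_{\tilde u(\tilde p)} X \cong \fg \oplus \fg^* \oplus T_{[x_\infty]}\cX_0$ to obtain $2\iota_{CR}(g, T_{\tilde u(\tilde p)} X) = 2\iota_{CR}(g, \cX_0) + \dim G - \dim C(g)$. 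The arithmetic then collapses to the stated formula.

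Your assertion that ``the $\fg$-valued summands of the complex cancel in pairs up to compact operators'' is where things go wrong. They do not cancel: the gauge block contributes $-\dim G(1-2g_\Sigma)$, and the residual constant gauge contributes a further $-\dim C(g)$. Correspondingly the Cauchy--Riemann piece lives on the full rank-$n$ bundle $\tilde u^* T^{\vert} Y$, not on a rank-$(n-\dim G)$ quotient. The decomposition $T_x X = \fg \oplus J\fg \oplus T_{[x]}\cX_0$ is only available at points where $u$ lands in $\mu^{-1}(0)$ (where the action is locally free); globally $u$ may pass through points with positive-dimensional stabiliser, so there is no global orbit-direction subbundle to split off, and no reason a putative quotient would have degree $\langle c_1^G(TX), B\rangle$. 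Your final paragraph correctly flags that passing from the age on $T_x X$ to the age on $T_{[x]}\cX_0$ is the crux, but in the paper this passage is a pointwise computation at the orbifold point \emph{after} the index has been evaluated, not a global bundle reduction performed beforehand.
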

  
  \begin{proof}  With the Fredholm set-up and the orbifold model discussed above, we only need to calculate the index of
  the linearisation operator for the symplectic vortex $(\tilde A, \tilde u)$ on the orbifold Riemann surface
  $\Sigma_{orib} = (|\Sigma_{orbi}|,   (p, m))$, modulo based gauge transformations.  Note that the remaining gauge transformations consist of constant ones taking values in $C(g)$, the centraliser of $g$ in $G$, as $\tilde u (\tilde p ) 
 \in \big(  \mu^{-1}(0)\big)^g$. 
  
  The underlying Fredholm operator 
  is the a compact perturbation of the direct sum of the operator $(-d^*_A,  *d_A)$ 
  \[
  \Omega^1_\delta (\Sigma, P^{ad}) \to  \Omega^0_\delta (\Sigma, P^{ad})
 \oplus   \Omega^0_\delta (\Sigma, P^{ad})
 \]
 in the original cylindrical model, 
 with its  index given by  $-\dim G (1-2g_\Sigma)$, and the Cauchy-Riemann operator $\bar{\p}_{\tilde A, \tilde u}$ on 
 the orbifold $\Sigma_{orbi}$ with values in  the complex vector bundle  $\tilde u^* T^{\vert} Y$.
 Hence, the virtual dimension is given by
 \ba\label{index:sum}
  \ind \bar{\p}_{\tilde A, \tilde u} - \dim G (1-2g_\Sigma) -\dim C(g).
  \na
  
  By the orbifold index theorem,
 we have
 \ba\label{index:CR}
 \ind \bar{\p}_{\tilde A, \tilde u} =  2\<c_1 (u^* T^{vert} Y), [|\Sigma_{orbi}|]\>  + 2n  (1-g_\Sigma) - 
 2 \iota_{CR}(g,  T_{\tilde u (\tilde p)} X).
 \na
 By the definition of $c_1^G(u^* T^{vert} Y)$ and $[u_G]$, we have 
 \[
 \<c_1 (u^* T^{\vert} Y), [|\Sigma_{orbi}|]\> = \<c_1^G(TX), B\>.
 \]

To calculate the degree shift for the $g$-action on $T_{\tilde u (\tilde p} X$, we apply the following 
decomposition
\[
T_{\tilde u (\tilde p)} X \cong  \fg \oplus\fg^* \oplus T_{[x_\infty]} \cX_0.
\]
  Here the actions of $g$ on  $\fg$ and $\fg^*$ are adjoint to each other and the zero eigenspace of
  the $g$-action on $\fg$ is the Lie algebra of $C(g)$. By the definition of degree shift, this implies that
  \ba\label{sum}\begin{array}{lll}
&&  2 \iota_{CR}(g,  T_{\tilde u (\tilde  p) } X) \\[2mm]
&=&  2\iota_{CR}(g,  \cX_0) +2 \dim_\C G/C(g)\\[2mm]
&=&   2\iota_{CR}(g,  \cX_0) +  \dim_\R G/C(g).
\end{array}
\na
Put these formula (\ref{index:sum}),  (\ref{index:CR}) and (\ref{sum}) together, we get the virtual dimension
as claimed in the theorem. 
\end{proof}


\subsection{$L^2$-moduli space of   symplectic vortices on punctured Riemann surface}\

Let $C=(\Sigma, p_1,\ldots, p_k)$ be a Riemann surface with $k$ marked  points.    We assume that $C$ is stable, i.e, $2-2g(\Sigma) -k <0$ where $g_{\Sigma}$ is the genus of the Riemann surface $\Sigma$. It is well known that there is a canonical hyperbolic metric on   the punctured Riemann surface 
$\Sigma\setminus\{p_1,\ldots,p_k\}$.  This hyperbolic metric  provides  a disjoint  union of  horodiscs $D(p_i)$ rounded at 
each punctured point $p_i$. We may deform the metric on the disc such that the metric becomes
a cylinder end metric.  For simplicity, we use the same notation $\Sigma$ for this Riemann surface with   $k$ cylindrical  ends.  Denote the metric and the corresponding  volume form   by $\rho_{\Sigma}$ and $\nu_{\Sigma}$ respectively.

 Let $P$ be a  principal $G$-bundle over $\Sigma$.  Let $\cN_{\Sigma}(X, P)$  be the moduli space of symplectic vortices with finite energy on $\Sigma$ associated to $P$ and a  $2n$-dimensional Hamiltonian $G$-space $(X, \omega)$.  
Then $\cN_{\Sigma}(X, P)$ 
is the space of  gauge equivalence classes of solutions to the symplectic vortex equations  (\ref{sym:vortex}) for
\[
(A, u) \in\widetilde  \cB_{W^{1, p}_\loc(\Sigma)} =  \cA_{W^{1, p}_\loc(\Sigma)} \times W^{1, p}_{\loc, G}(P, X)
\]
 such that
\[
E(A, u) = \int_{\Sigma} \dfrac 12 (|d_Au|^2 + |F_A|^2 + |\mu\circ u|^2) \nu_{\Sigma} <  \infty.
\]
Then  the asymptotic limit map  
\[
\p_{\infty}:  \cN_{\Sigma }(X, P) \longrightarrow \big(\Cr\big)^k 
\]
is continuous.  Let $\delta$ be a positive real number which is smaller than the minimum absolute 
value of eigenvalues of the Hessian operators of $\cL$ along the compact critical manifold $\Cr$, then 
 $[u, A]\in  \cN_{\Sigma}(X, P)$ decays exponentially to its asymptotic limit along each end.
Moreover, the energy function on $\cN_{\Sigma_ { \mbf p}}(X, P)$ takes values in a discrete set
\[
\{  \< [\omega -\mu], B\>  |  B \in H_2^G(X, \Z)\}.
\]

Fix an equivariant homology class $B \in H^G_2(X, \Z)$ such that $\< [\omega -\mu], B\> >0$. Let 
$\cN_{\Sigma}(X, P, B)$ be the moduli space of symplectic vortices on $\Sigma$ associated to
$(P, X)$ with the homology class $B$. We remark that the homology class of $(u, A)$ is defined by the associated orbifold model as in the previous section for one cylindrical end cases. 

Then the Fredholm analysis for a one cylindrical end case in the previous section can be adapted to establish the following theorem.

  \begin{theorem}\label{APS:index-2}Let $ \cN_{\Sigma} (X, P, B; \{(g_i)\}_{i=1, \cdots, k})$ be the subset of
   $\cN_{\Sigma} (X, P, B)$ consisting of 
symplectic vortices  $[(A, u)] $ such that 
  \[
  \p_\infty  (A, u) \in  \big( \cX_0^{(g_1)}  \times \cdots \times\cX_0^{(g_k)}  \big)  \subset (\Cr)^k
     \]
Then  $\cN_{\Sigma} (X, P, B; \{g_i\}_{i=1, \cdots, k})$  admits an orbifold  Fredhom system with its virtual dimension given by
\[
2\<c_1^G(TX), B\> + 2(n-\dim G) (1-g_{\Sigma}) - 2\sum_{i=1}^k  \iota_{CR}( \cX_0^{(g_i)},  \cX_0)
\]
where $g_{\Sigma}$ is the genus of the Riemann surface $\Sigma$. 
 \end{theorem}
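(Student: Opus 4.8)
The plan is to carry out the construction of Theorem~\ref{APS:index} simultaneously at each of the $k$ cylindrical ends; the one-ended case already contains all the analytic input, and the only genuinely new feature is that several twisted sectors enter at once. First I would fix a weight $\delta>0$ below the minimal absolute value of the nonzero eigenvalues of the Hessian operators along the compact critical manifold $\Cr$ (Proposition~\ref{Hess:spec}), and form the weighted spaces $W^{k,p}_\delta$ near all $k$ ends. Applying the exponential decay of Theorem~\ref{decay:exp} independently on each half-cylinder $S^1\times[0,\infty)$ shows that a finite-energy vortex in $\cN_\Sigma(X,P,B;\{(g_i)\})$ converges to a critical point of $\cL$ lying in $\cX_0^{(g_i)}$ along the $i$-th end, at the fixed rate $\delta$. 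Exactly as in the one-ended setting this makes the symplectic vortex equation into a smooth section of a $\cG_\delta$-equivariant Banach bundle whose linearization $\cD_{A,u}\oplus L^*_{A,u}$ is Fredholm on the weighted spaces, and since each asymptotic limit has only a finite stabiliser the quotient is an orbifold Fredholm system.

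Next I would build the orbifold resolution end by end. At the $i$-th puncture the asymptotic holonomy $g_i$ has finite order $m_i$; pulling back along the degree-$m_i$ cover $re^{i\theta}\mapsto(re^{i\theta})^{m_i}$ of the punctured disc trivialises the holonomy and lets the end extend across an orbifold point locally modelled on $[\D/\Z_{m_i}]$, just as in the single-end discussion preceding Theorem~\ref{APS:index}. Performing this at all $k$ ends produces one closed orbifold Riemann surface $\Sigma_{orbi}$ whose coarse space is a closed surface of genus $g_\Sigma$ and which carries $k$ orbifold points $p_1,\dots,p_k$, together with a representative pair $(\tilde A,\tilde u)$ satisfying $\tilde u(\tilde p_i)=x_{\infty,i}\in(\mu^{-1}(0))^{g_i}$. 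The gauge-reduced linearized operator is, up to compact perturbation, a Cauchy--Riemann type operator on $\Sigma_{orbi}$; its effective coefficient bundle is obtained from $\tilde u^*T^\vert Y$ after removing the gauge directions $\fg$ and their $J$-images, so that on the reduced space it is modelled on the rank $n-\dim G$ bundle associated to $T\cX_0$.

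The computation is then an application of the Kawasaki orbifold index theorem, organized exactly as in the proof of Theorem~\ref{APS:index} but with the twisted-sector correction accumulated over all $k$ orbifold points. The orbifold index equals its coarse-space contribution minus the isotropy corrections at $p_1,\dots,p_k$:
\[
\mathrm{vdim}= (n-\dim G)\,\chi(|\Sigma_{orbi}|)+2\langle c_1^G(TX),B\rangle-2\sum_{i=1}^{k}\iota_{CR}(\cX_0^{(g_i)},\cX_0),
\]
where $\chi(|\Sigma_{orbi}|)=2-2g_\Sigma$ and $\langle c_1(\tilde u^*T^\vert Y),[|\Sigma_{orbi}|]\rangle=\langle c_1^G(TX),B\rangle$ by the definition of $B$ via the orbifold model. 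To identify the correction at each $p_i$ I would repeat the one-ended bookkeeping: computing the full degree shift $2\iota_{CR}(g_i,T_{\tilde u(\tilde p_i)}X)$ for $T^\vert Y$ and splitting $T_{\tilde u(\tilde p_i)}X\cong\fg\oplus\fg^*\oplus T_{[x_{\infty,i}]}\cX_0$, the $\fg\oplus\fg^*$ contribution $\dim_\R G/C(g_i)$ cancels against the $\mathrm{Ad}(g_i)$-correction coming from the gauge sector $P^{ad}$, leaving precisely $2\iota_{CR}(\cX_0^{(g_i)},\cX_0)$. Substituting $\chi(|\Sigma_{orbi}|)=2-2g_\Sigma$ gives the stated virtual dimension, and specialising to $k=1$ recovers Theorem~\ref{APS:index}.

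I expect the main obstacle to be the index arithmetic across several ends rather than any single analytic step: one must check that the coarse-space term scales with the full Euler characteristic of the $k$-punctured surface and that at each $p_i$ the cancellation between the $\fg\oplus\fg^*$ degree shift of $T^\vert Y$ and the adjoint correction of the connection sector is exact, so that summing over $i$ yields only the $T\cX_0$ contributions $\iota_{CR}(\cX_0^{(g_i)},\cX_0)$ and no residual $\dim C(g_i)$ or bare $\dim G$ terms. The remaining ingredients---Fredholmness on the weighted spaces, the Morse--Bott nondegeneracy in the normal directions from Proposition~\ref{Hess:spec}, and the applicability of the orbifold index theorem with isotropy data $\Z_{m_i}$ at each $p_i$---are routine given Section~\ref{3}, so the real content lies in assembling this bookkeeping consistently.
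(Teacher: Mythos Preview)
Your proposal is correct and follows essentially the same approach as the paper. The paper itself gives no independent argument for this theorem beyond the sentence ``the Fredholm analysis for a one cylindrical end case in the previous section can be adapted to establish the following theorem,'' and your write-up is precisely that adaptation: build the orbifold model $\Sigma_{orbi}$ end by end, apply the orbifold index theorem, and repeat at each puncture $p_i$ the cancellation between the $\fg\oplus\fg^*$ degree shift in $T^\vert Y$ and the adjoint correction from the gauge sector, exactly as in the one-ended proof.
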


\section{Compactness of $L^2$-moduli space of   symplectic vortices}

In this section, we establish a compactness result for the underlying topological space of the  moduli space $ \cN_{\Sigma } (X, P,B)$ of symplectic  vortices on a  Riemann surface $\Sigma$ with $k$ cylindrical ends.    We assume that $k>0$. By reversing the orientation on $S^1$ if necessarily, we can assume  that all these ends are modelled 
on $S^1\times [0, \infty)$.  

Given an orbifold   topological space $\cN$, the underlying topological space (also called the coarse space of $\cN$) will be denoted by $|\cN|$. We will provide a compactification  of the coarse  moduli space $| \cN_{\Sigma } (X, P, B)|$ by adding certain limiting data consisting of bubbling off $J$-holomorphic spheres in  $(X, \omega, J)$ as in the Gromov-Witten theory and  bubbled chains of symplectic vortices on cylinders.   
When $X$ is K\"ahler, the compactness theorem for  the  $L^2$-moduli spaces of  symplectic vortices on  a Riemann surface with  cylindrical end  have been studied in \cite{SV}.

 To describe the limiting data for a sequence of symplectic vortices on  $\Sigma$, we introduce an  index set  for  the topological type of the domain. Let $g$ be the topological  genus of $\Sigma$ and $B\in H^G_2(X, \Z)$ such that   $\< [\omega -\mu], B\> > 0$.  Recall that a tree is a connected graph without any closed cycle of edges.
 

 \begin{definition}\label{web:tree}
 A web  of   stable  weighted  trees  of the  type $(\Sigma;  B)$  is a finite  disjoint union of trees 
 \[
\Gamma =   \Gamma_0 \sqcup   \Gamma_1 \sqcup  \cdots \sqcup \Gamma_k
  \]
consisting of  a  principal tree  $\Gamma_0$  with ordered  $k$  tails and a collection of chains (ordered sequences) of trees 
\[
\Gamma_i  = \bigsqcup_{j=1}^{m_i}  T_i(j) 
\]
for each tail $i =1, \cdots, k$,  
together with  the following additional conditions. 
   \begin{enumerate}
\item  The principal tree $\Gamma_0$ has a distinguished vertex (called the principal root) with  a weight $(g, B_0)$ and  ordered  $k$  tails labelled by $\{1, 2, \cdots,  k\}$.   Here $B_0 \in H_2^G(X, Z)$ satisfies the positivity condition
\[
\< [\omega -\mu], B_0\> \geq 0.
\] 

\item
 For the $i$-th tail in $\Gamma_0$, there is  a chain of  trees  of length $m_i$
 \[
 \Gamma_i = T_i(1) \sqcup T_i (2) \sqcup \cdots \sqcup T_i (m_i)
 \] 
 such that, if non-empty, 
each $ T_i(j)$ has  a distinguished vertex (called a branch  root)  with a weight given  by a class 
$B_{i, j} \in H_2^G(X, Z)$ such that
\[
\< [\omega -\mu], B_{i, j}\> \geq 0.
\]
If $B_{i, j} =0$,  the tree $ T_i(j)$ is non-trivial in the sense that the branch root is not the only vertex. 

\item  Any undistinguished vertex  $v$ in   $\Gamma$    has its weight given  by a class   $B_v\in H_2(X, \Z)$. 
such that
\[
\< [\omega -\mu], B_v\> \geq 0. 
\]
If $B_v=0$, the number of edges  at $v$ is at least 3, two of which have non-zero weights. 

\item Under the natural  homomorphism   $H_2(X, \Z) \to H^G_2(X, \Z)$, 
\ba\label{sum:true}
B_0 + \sum_{i=1}^k\sum_{j=1}^{m_i}    B_{i, j}  + \sum_{i=0}^m \sum_{v\in V(\Gamma_i)}  B_v = B.
\na
Here   $V(\Gamma_i)$ is  the set of undistinguished vertices in $\Gamma_i$. 
\end{enumerate} 
The equivalence between two webs of stable weighted trees can be defined in a usual sense.
 Denote by $\cS_{\Sigma;    B}$ be the  set of equivalence classes of  webs  of   stable weighted   trees of the type $(\Sigma;  B)$. 
  \end{definition}
 
Given two element $[\Gamma] $ and $[\Gamma']$ in $ \cS_{\Sigma;   B}$, we say $[\Gamma] \prec[ \Gamma']$ if
any  representative $\Gamma'$  in $[\Gamma']  $  can be obtained, up to equivalence,  from any representative $\Gamma$  in $[\Gamma]  $   by performing   finitely many steps of the following three
 operations.
\begin{enumerate}
\item Contracting an edge connecting  two  undistinguished  vertices, say $v_1$ and $v_2$,  in 
a tree to obtain a new vertex with  a new  weight $B_{v_1}+ B_{v_2}$.
\item Identifying two branch  roots of   adjacent  trees in a chain   $\Gamma_i$  to get a chain of trees of length $m_i-1$ with  a  weight given by the sum of the two assigned weights.
\item Identifying the principal  root  in $\Gamma$ with a    first branch root 
 in a chain, say  $\Gamma_i$,   such that the new main root  is endowed with a new  weight 
$B_0 + B_{i, 1}$ and the $i$-th chain becomes $T_i (2) \sqcup \cdots \sqcup T_i (m_i)$. 
\end{enumerate}

 \begin{lemma}  $(\cS_{\Sigma;   B}, \prec)$ is a  partially ordered finite  set.
 \end{lemma}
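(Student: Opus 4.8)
The plan is to verify the two defining properties of Definition \ref{web:tree} separately: that $\cS_{\Sigma; B}$ is finite, and that $\prec$ is a partial order on it. Throughout I write $E = \<[\omega-\mu], B\>$, which by hypothesis is positive.

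For finiteness, the main input is the energy quantization recorded earlier: the image of $\<[\omega-\mu], \cdot\>$ on $H_2^G(X,\Z)$ consists of integer multiples of a fixed $N_{[\omega-\mu]}>0$. Applying $\<[\omega-\mu],\cdot\>$ to the balancing condition (4) and using that every weight is energy-nonnegative shows that the energies of the individual pieces sum to $E$; hence at most $E/N_{[\omega-\mu]}$ of the distinguished and undistinguished vertices can carry a strictly positive weight. Next I would bound the combinatorial skeleton. The stability clauses of (2) and (3) — a zero-weight undistinguished vertex has valence $\geq 3$ with two positive-weight edges, and a branch tree $T_i(j)$ with zero branch-root class is forced to be non-trivial — imply that any undistinguished leaf carries positive weight and that every tree $T_i(j)$ in every chain contributes strictly positive energy (through its branch root, or through an undistinguished vertex when $B_{i,j}=0$). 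Standard tree combinatorics (in a tree the number of vertices of valence $\geq 3$ is at most the number of leaves minus two) then bounds the number of zero-weight vertices in terms of the number of positive-weight vertices and the number $k$ of tails, while the previous observation bounds each chain length $m_i$, and hence $\sum_i m_i$, by $E/N_{[\omega-\mu]}$. Thus the underlying graph ranges over finitely many isomorphism types, and for each fixed skeleton the weights are effective equivariant classes of energy at most $E$, of which there are only finitely many by Gromov-type finiteness for classes of bounded energy. Combining these gives finiteness of $\cS_{\Sigma;B}$.

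For the partial-order property, reflexivity is the empty sequence of operations, and transitivity is immediate: a sequence of moves realizing $[\Gamma]\prec[\Gamma']$ followed by one realizing $[\Gamma']\prec[\Gamma'']$ is again a valid sequence of the three moves. The key point is antisymmetry, which I would obtain from a strictly monotone complexity. Let $N(\Gamma)$ be the total number of vertices of the web $\Gamma$; this is clearly invariant under the equivalence relation on webs. Each of the three operations — contracting an edge between two undistinguished vertices, identifying two adjacent branch roots in a chain, identifying the principal root with a first branch root — merges two vertices into one and therefore decreases $N$ by exactly one. Consequently, if $[\Gamma]\prec[\Gamma']$ is realized by at least one move then $N(\Gamma')<N(\Gamma)$. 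Hence $[\Gamma]\prec[\Gamma']$ and $[\Gamma']\prec[\Gamma]$ can both hold only through empty sequences, forcing $[\Gamma]=[\Gamma']$; this establishes antisymmetry and completes the argument.

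I expect the main obstacle to be the finiteness statement, and within it the control of the number of zero-weight (ghost) vertices and of the chain lengths $m_i$ through the stability conditions; once the skeleton is bounded, the finiteness of effective classes of bounded energy and the entire partial-order verification are routine.
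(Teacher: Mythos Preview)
Your proposal is correct and follows essentially the same approach as the paper. The paper's own proof is extremely terse --- it asserts that $\prec$ is a partial order and that condition (\ref{sum:true}) together with the stability clauses yield finitely many weight collections and hence finitely many webs --- and you have simply filled in the details: the energy-quantization bound on positive-weight vertices, the tree combinatorics controlling ghost vertices and chain lengths, and the explicit complexity function $N(\Gamma)$ (total vertex count) that strictly decreases under each of the three operations and thereby gives antisymmetry. The only mild addition you make beyond the paper is the appeal to Gromov-type finiteness to bound the set of effective classes of energy at most $E$; the paper leaves this implicit in its assertion that there are finitely many weight collections.
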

 \begin{proof} It is easy to see that the order $\prec$ is  a partial order. By the condition (\ref{sum:true}), we see that there are only finitely many collections of
 \[
 \{(B_0, B_{i, j}, B_v)| \< [\omega -\mu], B_{i, j}\> >  0,   \< [\omega -\mu], B_v\> >  0\}.
 \]
 The stability conditions for branch roots or undistinguished vertices  with zero weight implies that there
 are only finitely many possibilities. This ensures that 
 $\cS_{\Sigma; B}$ is a finite set. 
 \end{proof}

  Given an element  $\Gamma = \sqcup_{i=0}^k \Gamma_i$ in $\cS_{\Sigma; B}$, we can associate a bubbled Riemann surface  of genus $g$ and $k$ cylindrical ends,   and a collection of  chains of bubbled cylinders  as follows. Associated to $\Gamma_0$,  we assign   a bubbled Riemann surface  $ \Sigma_0 $  which
   is the nodal Riemann surface obtained by attaching  trees of $\C\P^1$'s   to  $\Sigma$.  Associated to  
   an $i$-th chain of trees
  $  \Gamma_i = \sqcup_{j=1}^{m_i} T_i(j)$
 we assign  a chain of  bubbled cylinders
 \[
 C_i = \{C_i(1), \cdots, C_i (m_i)\}
 \]
 where each $C_i(j)$   is a nodal cylinder  with   trees of $\C\P^1$'s attached according to the tree .
 
 Now we construct a moduli space of  stable  symplectic vortices with the domain curve being the bubbled Riemann surface  $\Sigma_0$ or one of  the bubbled cylinders in 
$\{C_i(j)| i=1, \cdots, k; j =1, \cdots, m_i\}$ as follows.

 Let $\tilde \Gamma_0$ be the new weighted 
 graph obtained by severing all edges in $\Gamma_0$ which are attached to the root.  
 Assume  that   $\Gamma_0$ has   $l_0$ tress attached to the root. Then $\tilde \Gamma_0$  consists a single vertex (the root) with $l_0$ half-edges and $k$ ordered tails. The remaining part of $\Gamma_0$, denoted  
 by $\hat\Gamma_0$, becomes  a disjoint union of $l_0$ trees, each of which has a half-edge attached one particular vertex (the  adjacent vertex to the root). Equivalently, 
 \[
 \Gamma_0 = \big( \tilde \Gamma_0 \sqcup \hat \Gamma_0 \big)/\sim
\]
 where the equivalence relation is given by the identification of $l_0$-tuple  half-edges in  $\tilde \Gamma_0$ 
 with  the   $l_0$-tuple  half-edges  in  $\hat \Gamma_0$.

 Denote by $\cN_{\tilde \Gamma_0}$ by the moduli space of 
 symplectic vortices of  homology class $B_0$ over
 $\Sigma$ with  $l_0$  marked points and $k_0$ cylindrical ends. 
 Then  there is a continuous   map
\[
\widetilde{ev}_0:  \cN_{\tilde \Gamma_0} \longrightarrow    X^{l_0} 
\]
given by the evaluations at  the $l_0$  marked points.   Moreover, 
there is a continuous asymptotic limit map along each of the $k$ cylindrical ends
\[
\p_0 :  \cN_{\tilde \Gamma_0} \longrightarrow  (\Cr)^{k}.
\]

Associated to   $\hat \Gamma_0$, as a  disjoint union of $l_0$ trees, there is a moduli space of
the Gromov-Witten moduli space of unparametrized stable  pseudo-holomorphic spheres  with $l_0$-marked points and  the  weighted  dual graph given by $\hat \Gamma_0$, see Chapter 5 in \cite{McS1}. We denote this moduli space by $\cM^{GW}_{\hat \Gamma_0}$.
Then there is a  continuous   map
\[
\widehat{ev}_0:  \cM^{GW}_{\hat \Gamma_0} \longrightarrow  X^{l_0}
\]
given by the evaluations at  the $l_0$  marked points.
 The moduli space of  bubbled symplectic vortices of type $\Gamma_0$, 
 denoted by $\cN_{\Gamma_0}$, is defined to be  the orbifold topological space {\em generated}  by  the fiber product
\[
  \cN_{\tilde \Gamma_0} \times_{ X^{l_0} }   \cM^{GW}_{\hat \Gamma_0}
\] 
with respect to  the maps $\widetilde{ev}_0$ and $\widehat{ev}_0$. 
Then  the coarse moduli space  $|\cN_{\Gamma_0}|$  inherits  a continuous  asymptotic limit map
\[
\p_{\Gamma_0}:  |\cN_{\Gamma_0} | \longrightarrow  (\Cr)^{k}.
\]

\begin{remark}
We remark that there is an ambiguity  here   with regarding the orbifold structure on   $\cN_{\Gamma_0}$. A proper way  to make this precise is to employ the  language  of proper \'etale groupoids  to describe the spaces of  objects and arrows on $\cN_{\tilde \Gamma_0} \times_{ X^{l_0} }   \cM^{GW}_{\hat \Gamma_0}$, and then add further arrows to include 
all equivalences relations  to get an orbifold structure on $\cN_{\Gamma_0}$. As we are dealing with the compactification 
of the coarse moduli space, there is no ambiguity for the coarse space $|\cN_{\Gamma_0}|$. We will return to this issue when we discuss weak Freholm systems for these moduli spaces in \cite{CW2} and \cite{CW3}. 
 \end{remark}

Similarly, for the $i$-th  chain of trees $ \Gamma_i =\bigsqcup_{j=1}^{m_i}  T_i(j)$ , we define a chain of  moduli spaces  of stable  symplectic vortices of type $\Gamma_i$ as follows.  Associated to the  tree $T_i(j)$, we excise the branch root away to get a graph consisting of a single vertex with $l_{i, j}$ half-edges and $l_{i, j}$  trees with one half-edge for each tree.  Let  $\tilde  T_i(j)$ and $\hat T_i(j)$  be these two graphs respectively.  
Denote by $\cN_{i, j}$ be the moduli space of symplectic vortices 
of homology class $B_{i, j}$ over the  cylinder $C_i(j)\cong S^1\times\R$ with $l_{i, j}$-marked points. Then there are a continuous evaluation map
\[
\widetilde {ev}_{i, j}:  \cN_{i, j} \longrightarrow   X^{l_{i, j}}
\]
and   continuous asymptotic value maps
\[
\p^\pm_{i, j}:  \cN_{i, j} \longrightarrow  \Cr
\]
   associated to the two ends at $\pm \infty$ respectively.  Denote  by $\cM^{GW}_{i, j}$ the 
    Gromov-Witten moduli space of unparametrized stable  pseudo-holomorphic spheres  with $l_0$-marked points and  the  weighted  dual graph given by $\hat \Gamma_0$.  Note that $\cM^{GW}_{i, j}$ is equipped with a 
 continuous evaluation map
\[
\widetilde {ev}_{i, j}:  \cM^{GW}_{i, j} \longrightarrow  X^{l_{i, j}}.
\]
Then  by adding all arrows to  the fiber product 
\[
    \cN_{i, j} \times_{ X^{l_{i, j}} } \cM^{GW}_{i, j}, 
\]
we get   the moduli space $\widehat \cN_{T_i(j)}$ of  stable  symplectic vortices of type $T_i(j)$.
In particular,  $ |\widehat  \cN_{T_i(j)} |$ inherits   continuous  asymptotic limit maps
\ba\label{bdy:i-j}
\hat \p^{\pm}_{T_i(j)}:  |\widehat  \cN_{T_i(j)} |  \longrightarrow   \Cr 
\na
along the two ends. Note that the group of  rotations and translations   $S^1\times \R$ on the cylinder  induces a free action of $S^1\times \R$ on the  moduli space $ \widehat  \cN_{T_i(j)}$ which  preserves the asymptotic limit maps $\hat \p^{\pm}_{T_i(j)}$ invar. We quotient the  moduli space $ \widehat  \cN_{T_i(j)}$ by the  group $\R \times S^1$, and denote the resulting
moduli space by
\[
 \cN_{T_i(j)} =  \widehat  \cN_{T_i(j)} /(\R\times S^1).
 \]
The induced   asymptotic limit maps on the coarse moduli space is denoted  by
 \[
 \p^{\pm}_{T_i(j)}:   | \cN_{T_i(j)} |  \longrightarrow   \Cr.
 \]

 By taking the consecutive  fiber products with respect to maps $\p^+_{T_i(j)}$ and 
 $\p^-_{T_i(j+1)}$ for $j=1, \cdots, m_i$,  we get the  coarse moduli spaces of chains of stable  symplectic vortices of type $\Gamma_i$, that is,
\[
|\cN_{\Gamma_i} | = | \cN_{T_i(1)} |  \times_{\Cr}  |\cN_{T_i(2)} |  \times _{\Cr}  \cdots  \times_{\Cr}  |\cN_{T_i(m_i)}|.
\]
Then there are two asymptotic limit maps given by $\p^-_{T_i(1)} $ and  $\p^+_{T_i(m_i)}$, simply denoted 
by  $\p^-_i $ and  $\p^+_i$,
\[
\p^\pm_i:   |\cN_{\Gamma_i}|  \longrightarrow   \Cr. 
\]

\begin{definition} Given $\Gamma   = \Gamma_0  \sqcup   \Gamma_1 \sqcup  \cdots \sqcup \Gamma_k$ a  web of stable weighted trees  in $\cS_{\Sigma; B}$,   the  coarse moduli space of stable symplectic vortices of type $\Gamma$, denoted by $|\cN_{\Gamma}|$,  is defined to be the fiber product
\[
|\cN_{\Gamma}| = | \cN_{\Gamma_0}| \times_{(\Cr)^k} \prod_{i=1}^k | \cN_{\Gamma_i}|,
 \]
 where $ \prod_{i=1}^k  |\cN_{\Gamma_i} |= |\cN_{\Gamma_1}|  \times  |\cN_{\Gamma_2} | \times \cdots \times 
 | \cN_{\Gamma_k}|$, and the fiber product is defined via the maps $\p_{\Gamma_0}:  |\cN_{\Gamma_0}| \to (\Cr)^k$
  and   
\[ \prod_{i=1}^k  \p^-_i:   \prod_{i=1}^k  |\cN_{\Gamma_i}| \to (\Cr)^k.\]
  There exists a continuous map
\[
\p_\Gamma:  |\cN_{\Gamma}| \longrightarrow  (\Cr)^{k}
\]
given by $ \prod_{i=1}^k  \p^+_i$.

   \end{definition}

   For any $k$-tuple $((g_1), \cdots, (g_k))$ conjugacy classes in $G$  such that each representative $g_i$ in $(g_i)$
    has a non-empty fixed point set in $\mu^{-1}(0)$, then we define 
\[
| \cN_{\Gamma} ((g_1), \cdots, (g_k)) | = \p_\Gamma^{-1} \left( |\cX_0^{(g_1)}| \times \dots \times | \cX_0^{(g_k)}|
 \right). 
\]   
Now we can state the compactness theorem for the coarse  $L^2$-moduli space 
$|\cN_{\Sigma}(X, P, B)|$ of symplectic vortices on $\Sigma$.

\begin{theorem}  \label{thm:cpt}
Let $\Sigma$ be a Riemann surface of genus $g$  with $k$-cylindrical ends. The  coarse $L^2$-moduli space 
$|\cN_{\Sigma}(X, P, B)|$ can be compactified to    a  stratified  topological space 
\[
|\overline{\cN}_{\Sigma}(X, P, B) | = \bigsqcup_{\Gamma \in \cS_{\Sigma;  B}} | \cN_\Gamma |
\]
such that the top stratum is $|\cN_{\Sigma}(X, P, B)| $. 
Moreover, the  coarse moduli space $$ |\cN_{\Sigma} (X, P, B; \{(g_i)\}_{i=1, \cdots, k})|$$  of the
moduli space $ \cN_{\Sigma } (X, P, B; \{(g_i)\}_{i=1, \cdots, k})$  in Theorem \ref{APS:index-2}
can be  compactified to a stratified topological    space 
\[
|\overline{\cN}_{\Sigma}(X, P, B; \{(g_i)\}_{i=1, \cdots, k})|  = \bigsqcup_{\Gamma \in \cS_{\Sigma;  B}} | \cN_\Gamma  ((g_1), \cdots, (g_k))|.
\]
\end{theorem}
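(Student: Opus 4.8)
The plan is to prove compactness sequentially: given any sequence $[(A_n, u_n)]$ in $|\cN_{\Sigma}(X, P, B)|$, I would extract a subsequence converging, in the weak chain sense of \cite{Don1}, to a stable object indexed by some $\Gamma \in \cS_{\Sigma; B}$, and then verify that the resulting stratified space is compact with the stated strata. The starting point is that the homology class $B$ fixes the total Yang--Mills--Higgs energy, since by \eqref{energyidentity2} every vortex in $\cN_{\Sigma}(X, P, B)$ satisfies $E(A_n, u_n) = \<[\omega - \mu], B\>$. This uniform energy bound, combined with the regularity theorem and Uhlenbeck compactness (Theorem 3.2 in \cite{CGRS}), gives, after applying $W^{2,p}_{\loc}$ gauge transformations, $C^\infty_{\loc}$-convergence on any fixed compact region of $\Sigma$ away from a finite set of interior points where $|d_{A_n} u_n|$ blows up.

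First I would treat the interior bubbling exactly as in Gromov--Witten theory. At each concentration point I rescale conformally; because the area shrinks, the curvature term in \eqref{sym:vortex} scales away and the bubble limit is an honest $J$-holomorphic sphere in $(X, \omega, J)$ rather than a vortex. Iterating the soft-rescaling argument (Chapter 4 of \cite{McS1}) produces finitely many bubble trees, whose combinatorial type records the undistinguished vertices of the web and supplies the trees $\hat\Gamma_0$ and $\hat T_i(j)$ attached to the principal component and to the cylinder components. The limiting maps glue to the weak limit at the nodes by the removable-singularity theorem.

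The heart of the argument is the analysis along each cylindrical end $S^1 \times [0, \infty)$, where Floer-type breaking occurs and where the choice of cylindrical metric is essential. Here I would study how the energy on $S^1\times[T,\infty)$ is distributed as $n \to \infty$. Using Lemma \ref{energy}, on a long cylinder the energy equals a difference of values of $\tilde{\cL}$, and the crucial inequality of Proposition \ref{cru:inequ2} forces the trajectory to decay exponentially toward $\Crit(\cL)$ whenever it enters a small neighbourhood of the critical set. Consequently the energy along the end concentrates at finitely many levels $t_n^{(1)} \ll \cdots \ll t_n^{(m_i)}$; translating in $t$ at each level and passing to the limit, again via Uhlenbeck and interior elliptic estimates, extracts a nonconstant vortex on $S^1 \times \R$. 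After quotienting by the $\R \times S^1$-action and absorbing its own interior bubbles, each such limit is precisely a component $\cN_{T_i(j)}$, and these line up head-to-tail at common critical points to form the chain $\cN_{\Gamma_i}$; the matching of successive asymptotic limits is guaranteed by the exponential decay of Theorem \ref{decay:exp}.

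To close the argument I would prove the \emph{no-neck-energy} statement: the transition regions between consecutive levels, and between a level and the principal component, carry no energy in the limit, so that the total energy is conserved. This again rests on Proposition \ref{cru:inequ2}, which bounds the length of a trajectory confined to a neighbourhood of $\Crit(\cL)$ by the square root of the action difference; on the flat cylindrical metric $d\theta^2 + dt^2$ the curvature and moment-map contributions are genuinely measured, so energy cannot leak as it could under a collapsing end metric. Conservation of energy yields the homology relation \eqref{sum:true}, hence $\Gamma \in \cS_{\Sigma; B}$, and the stability conditions of Definition \ref{web:tree} are exactly what remains after dividing out the sphere reparametrizations and the $\R \times S^1$-action on cylinders. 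Equipping $|\overline{\cN}_{\Sigma}(X, P, B)|$ with the weak chain convergence topology, the subsequential limit just constructed gives compactness, the strata are the $|\cN_\Gamma|$ by construction, and the open top stratum is $|\cN_{\Sigma}(X, P, B)|$; the refined statement follows from the continuity of $\p_\infty$ (Proposition \ref{asymp:map}), which makes the asymptotic conjugacy classes pass to the limit so that $|\overline{\cN}_{\Sigma}(X, P, B; \{(g_i)\})| = \p_\infty^{-1}\big(\prod_i |\cX_0^{(g_i)}|\big)$ decomposes as the claimed union of the $|\cN_\Gamma((g_1), \dots, (g_k))|$. I expect the level-decomposition and no-neck-energy step along the ends to be the main obstacle, since it requires controlling interior bubbling and Floer breaking simultaneously and confirming, via the cylindrical-metric hypothesis, that these are the only degenerations --- in particular that no energy is lost to a moment-map gradient flow beyond the cylinder breaking already recorded in the chains $\Gamma_i$, cf. \cite{MT}.
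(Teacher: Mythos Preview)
Your proposal is correct and follows essentially the same route as the paper: uniform energy from the homology class, Uhlenbeck/CGRS local compactness, interior rescaling producing $J$-holomorphic sphere bubbles, translation along the ends to extract cylinder vortices, and the no-neck-energy step via the crucial inequality of Proposition~\ref{cru:inequ2}. The only cosmetic difference is that the paper selects the cylinder levels iteratively (translate so that a fixed fraction $\hbar/4$ of energy sits on one side, extract a limit, then repeat) rather than identifying all levels $t_n^{(1)}\ll\cdots\ll t_n^{(m_i)}$ at once, and it derives the neck decay from the explicit differential inequality $\frac{d}{dt}E_i(t)\geq \delta E_i(t)$ on the symmetrized band; your length-of-trajectory formulation is equivalent.
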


\begin{proof}  For simplicity, we assume that   the Riemann surface $\Sigma$ has only one  outgoing cylindrical end, that is, diffeomorphic to  $S^1\times [0, \infty)$.  The proof for the general case is essentially the same.
Under this assumption,   any web of stable weighted trees of the type  $(\Sigma; B)$ has  only one chain of trees denoted by
$
\{T(1), T(2), \cdots, T(m)\}.
$

Given any sequence $[A_i, u_i] \in \cN_{\Sigma}(X, P, B)$, we shall show that there exists a subsequence with a limiting datum in $ \cN_\Gamma$ for some $\Gamma \in  \cS_{\Sigma;   B}$.  The strategy to this claim  is quite standard now, for example see \cite{Fl}, \cite{Don1} and \cite{MT}. 

Note that the energy function on this sequence 
\[
E(A_i, u_i) = \int_{\Sigma} \dfrac 12 (|d_{A_i}u_i|^2 + |F_{A_i}|^2 + |\mu\circ u_i|^2) \nu_\Sigma  
\]
is constant given by  $\< [\omega -\mu], B\>.$  For any non-constant pseudo-holomorphic map from a closed Riemann surface, the energy is bounded from below by a positive number 
\[
\min \{ \<[\omega], \beta\> | \beta \in H_2(X, \Z),  \<[\omega], \beta\> >0\}, 
\]
which is greater than the minimal energy of non-trivial symplectic vortices on $\Sigma$ associated to
$(P, X, \omega)$
\[
 \hbar =\min \{ \<[\omega-\mu], \beta\> | \beta \in H^G_2(X, \Z),  \<[\omega-\mu], \beta\> >0\}, 
\]

\noindent {\bf Step 1.}  (Convergence for the sequence with bounded  derivative) Without loss of generality,  we suppose that 
$\{(A_i, u_i)\}$  is a sequence of    symplectic vortices   in $\cN_{\Sigma}(X, P, B)$ with a uniform bound
\[
 \| d_{A_i} u_i\|_{L^\infty} < C
\]
for a constant $C$. Then there exists a sequence of  gauge transformations $\{ g_i\}$ such that 
$\{g_i\cdot (A_i, u_i)\}$ has a $C^\infty$ convergent subsequence.

This claims  follows from Theorem 3.2 in \cite{CGRS}.  

\noindent {\bf Step 2.}  (Bubbling phenomenon at interior points) Assume that the sequence $ \| d_{A_i} u_i\|_{L^\infty}$ is unbounded over a compact set in $\Sigma$, then the rescaling technics in the  proof of Theorem 3.4 in \cite{CGRS} can be applied here to get the standard pseudo-holomorphic sphere at the point in $\Sigma$ where a sphere is attached to $\Sigma$.  

Hence, combining Steps 1-2, we know that there may exist a subset of finite points, say $\{q_1, \cdots, q_{l_0}\}$, of
$\Sigma$ such that for any compact set $Z\subset  \Sigma' = \Sigma - \{q_1, \cdots, q_{l_0}\}$, 
there exists a subsequence of $(A_i,u_i)$ and gauge transformation $g_i$ such that $g_i(A_i,u_i)$ uniformly converge in $Z$.  As $Z$ exhausts $\Sigma'$, we get a symplectic
vortex $(A_\infty,u_\infty)$ on $\Sigma'$. By the removable singularity theorem, this   symplectic
vortex $(A_\infty,u_\infty)$  can be defined on $\Sigma$.

Moreover, at each point $q_j$, we get a  bubble tree of holomorphic sphere attached to $q_i$.
As in the Gromov-Witten theory,  there is no energy lost when the bubbling phenomenon happens  at interior points. This gives rise to a principal tree $\Gamma_0$  in a web of stable weighted trees in $\cS_{\Sigma; B}$. 

\noindent {\bf Step 3.}  (Bubbling phenomenon at  the infinite end)  Assuming  that for a sufficiently large $T$, the sequence $\{(A_i, u_i)\}$ converges to $(A_\infty,u_\infty)$ on  $\Sigma - (S^1 \times [T, \infty))$,
where $(A_\infty,u_\infty)$ is of the type $\Gamma_0$, a principal tree in Definition \ref{web:tree}.
 Now we study the sequence over the cylindrical end. We may further assume that  the Yang-Mills-Higgs energy 
 \[
  \int_{S^1\times [T, \infty)} \dfrac 12 (|d_{A_i}u_i|^2 + |F_{A_i}|^2 + |\mu\circ u_i|^2) 
 \]
 is greater than the minimum energy $\hbar$ defined as above. Otherwise, the limit of the sequence is in  $\cN_{\Gamma_0}$. 

We replace the sequence $\{(A_i, u_i)|_{(S^1 \times [T, \infty)}\}$ by their translations  to the left by $\{\delta_i\}$ 
such that the  Yang-Mills-Higgs energy  of the translate  for    $(A_i, u_i)$ over $[T-\delta_i, 0]$ 
is $ \hbar/4.$ Then $\delta_i \to \infty$ as $i\to \infty$.  Applying  the above standard convergence theorem 
to the  translated sequence with and without the  bounded   derivative condition on $ \| d_{A_i} u_i\|_{L^\infty}$, there exists a subsequence which converges to a  bubbled symplectic vertex $(A'_\infty,u'_\infty)$   on any compact subset of   $S^1\times \R$.
This gives rise to a stable symplectic vortex of type $\Gamma (1)$, where $\Gamma (1)$ is a tree with a branch root as in Definition \ref{web:tree}.

\noindent {\bf Step 5.} (No energy loss in between) Now we show that there is no energy loss on the connecting neck between
$(A'_\infty,u'_\infty)$  and $(A_\infty,u_\infty)$. Equivalently, associated to  the subsequence (still denoted by  
$\{(A_i, u_i)\}$,  for each $i$, there exist 
\[
N_i<N_i+K_i<N_i'-K_i<N_i'
\]
such that  $N_i,K_i$ and $N_i'-N_i -2K_i  \to \infty$ as $i\to \infty$, and under the temporal gauge, 
\begin{enumerate}
\item the sequence  $(A_i,u_i)$ on  $S^1 \times [N_i,N_i+K_i]$  coneverges  to
$(A_\infty,u_\infty)$ on any compact set after translation;
\item $(A_i,u_i)$ on   $ S^1 \times [N_i'-K_i,N_i]$   coneverges  to
$(A'_\infty,u'_\infty)$ on any compact set after translation.
\end{enumerate}
 We shall show that 
the  Yang-Mills-Higgs energy of $(A_i,u_i)$ on  $S^1\times [N_i+K_i,N_i'-K_i]$
tends to 0 as $i\to \infty$.

Let $y_\infty$ and $y'_{-\infty}$ be the limit of $(A_\infty,u_\infty)$
as $t\to\infty$
and $(A'_\infty,u'_{\infty})$ as $t\to -\infty$ respectively. Let $\bar y'_{-\infty}$ be the pair obtained from $y'_{-\infty}$ by reversing the orientation of $S^1$. Suppose that
$$
y_\infty=(a, \alpha),\;\;\;\bar y'_{-\infty}=(b,\beta).
$$
Then $(A_i(t),u_i(t)), t\in [N_i,N_i+K_i]$ is arbitrary close to $y_\infty$  and
$(A_i(t'),u_i(t')), t'\in [N_i'-K_i,N_i']$ is arbitrary close to $\bar y'_{-\infty}$ as $i\to \infty$.

We   claim  that $\widetilde {\mc L}(y_\infty)=\widetilde {\mc   L}(\bar y'_{-\infty})$.
Otherwise, the difference  would be larger than
$\hbar$. However the Yang-Mills-Higgs energy of $(A_i,u_i)$ on   $[N_i,N_i']$ is less than $\hbar/2$. This is impossible.

Now we explain the Yang-Mills-Higgs  energy of $(A_i,u_i)$ at
$[N_i+t, N_i'-t]$ decays exponentially with respect to $t$.
We normalize the band by translation  such that  $[N_i+K_i,N_i'-K_i]$   becomes
$
[-d,d]$ where $d=\frac{N_i'-N_i}{2} - K_i$.

Denote the Yang-Mills-Higgs energy of $y_i=(A_i,u_i)$ on   $S^1\times [-t,t]$ 
by 
\[
E_i(t) =  \int_{S^1\times [-t,t]} \dfrac 12 (|d_{A_i}u_i|^2 + |F_{A_i}|^2 + |\mu\circ u_i|^2)  
\]
 for  $0\leq t\leq d$. Then
$$
\frac{dE_i(t)}{dt}=\|\nabla \widetilde {\mc L}_{y_i(t)}\|^2+
\|\nabla \widetilde {\mc L}_{y_i(-t)}\|^2
$$
Replace  $\widetilde {\mc L}$ by  $ \widetilde {\mc L}-\widetilde {\mc L}(y_\infty)$. Then by the crucial inequality (Proposition \ref{cru:inequ2}), we obtain the following differential inequality
\ba\label{diff:inequ}
\frac{dE_i(t)}{dt} \geq \delta(|\tilde {\mc L}({y_i(t)})|+
| \tilde {\mc L}({y_i(-t)})|)
\geq \delta E_i(t).
\na
Here we use the fact that
$$
E_i(t)= |\widetilde{\mc L}(y_i(t))-\widetilde{\mc L}(y_i(-t))|.
$$
Then the differential inequality  (\ref{diff:inequ}) 
implies 
$$
e^{-\delta t}E_i(t)\leq e^{-\delta d}E_i(d).
$$
Apply to our case, this implies 
\begin{equation}
E(A_i,u_i)|_{[N_i+K_i,N_i'-K_i]}
\leq e^{-\delta K_i}E(A_i,u_i)|_{[N_i,N_i']}.
\end{equation}
As $K_i\to \infty$, the Yang-Mills-Higgs energy goes to 0.

This ensures that 
\[
\partial_{\Gamma_0} ([A_\infty,u_\infty]) = \partial_{\Gamma_1}^-  ([A'_\infty,u'_\infty])   \in \Cr.
\]
If the sum of Yang-Mills-Higgs energies of  $(A_\infty,u_\infty)$ and $(A'_\infty,u'_\infty)$ agrees with $\<[\omega-\mu], B\>$,  the limit of the sequence   is in $\cN_{\Gamma}$ for $\Gamma = \Gamma_0 \sqcup \Gamma_1$.

\noindent {\bf Step 6.} (Energy loss at the $+\infty$ end in the limit) 
If the sum of Yang-Mills-Higgs energies of  $(A_\infty,u_\infty)$ and $(A'_\infty,u'_\infty)$  is less than $\<[\omega-\mu], B\>$,  then 
\[
\nu = \<[\omega-\mu], B\>  - E(A_\infty,u_\infty) -   E(A'_\infty,u'_\infty) \geq \hbar.
\]
In this case, we  loss some energy at the $+\infty$ end in the limit, we repeat Steps 3-4 to get a limit in $\cN_{\Gamma}$ with  a chain of trees of length $m\geq 2$.  This  same process will  stop after a finitely many steps due to the fact that each tree in the chain carries at least $\hbar$ energy.
This completes the compactification of   $|\cN_{\Sigma}(X, P, B)|$.

The compactification of $|\cN_{\Sigma_{\mbf p}} (X, P, B; \{(g_i)\}_{i=1, \cdots, k})|$ can be obtained in the similar manner.

\end{proof}

\section{Outlook}\label{outlook}

In this paper, we mainly discuss the $L^2$-moduli space of symplectic vortices on a   Riemann surface with cylindrical end. The analysis can be generalised to the case of a family of  Riemann surfaces with cylindrical end. Then we
get a moduli space of $L^2$-symplectic vortices fibered over the moduli space of complex structures. In particular, for a Riemann surface
\[
\Sigma_{g, k} = (\Sigma, (z_1, \cdots, z_k), \fj)
\]
 of genus $g$ and with $k$-marked points, when $2-2g-k< 0$, we can consider $\Sigma_{g, k}$ as a  Riemann surface   of genus $g$ and with $n$-punctures. By the uniformization theorem, for each complex structure on  $\Sigma_{g, k}$, there is a unique  complete hyperbolic metric on  the  corresponding punctured  surface.  This defines a canonical horodisc structure at each puncture, see \cite{CLW2}. This    horodisc structure at each puncture is also called a hyperbolic cusp.
 Using the canonical  horodisc structure at each point defined the complete hyperbolic metric on  the punctured $\Sigma_{g, k}$, we can identify the moduli space $\cM_{g, k}$ with  the moduli space of hyperbolic metrics with a canonical  horodisc structure at each  punctured disc. Each horodisc can be equipped with a canonical cylindrical metric
 on the punctured disc. In particular, we get a smooth universal family of Riemann surface with $k$ cylindrical ends over
 the moduli space $\cM_{g, k}$. Then the analysis in this paper on the $L^2$-moduli space of symplectic vortices can be carried over to get a continuous family of Fredholm system defined by the  symplectic vortex equations. 
  The corresponding  $L^2$-moduli spaces of  symplectic vortices without and  with  prescribed  asymptotic data  will denoted by
\[
\cN_{g, k}(X, P, B)  \qquad \text{and }\qquad \cN_{g, k}(X, P, B;  \{(g_i)\}_{i=1, \cdots, k})  \]
respectively. 
Then we have the similar  compactness result for this $L^2$-muduli space where the index set $\cS_{\Sigma; B}$ is
replaced by $\cS_{g, k; B}$ where the root of a principal part of each web is replaced  a dual  graph as in the Gromov-Witten moduli space with weights in $H_2^G(X, \Z)$ at each vertex,   each vertex  carries  bubbling trees (with  weights in  $H_2(X, \Z)$) and each tail is assigned a chain of trees. 

In the subsequence paper, we shall  also establish a weak  orbifold Fredholm system and a gluing principle   for the compactified moduli space $\overline{\cN}_{g, k} (X, P, B; \{(g_i)\}_{i=1, \cdots, k})$  so that the virtual neighbourhood technique developed in  \cite{CLW3} can be applied to define a Gromov-Witten type invariant from these moduli spaces(\cite{CW2}).
We will show that the compactified moduli space $\overline{\cN}_{g, k} (X, P, B; \{g_i\}_{i=1, \cdots, k})$
admits an oriented  orbifold virtual system and the virtual integration  
\[
\int^{vir}_{\overline{\cN}_{g, k} (X, P, B; \{(g_i)\}_{i=1, \cdots, k})}:  H^*(I\cX_0, \R) ^k \to \R
\] 
is well-defined. Here $ I\cX_0$ is the inertial orbifold of the symplectic reduction $\cX_0 = \mu^{-1}(0)/G$. 
The Gromov-Witten type invariant  is defined to be 
 \[
 \< \alpha_1, \cdots, \alpha_k\>^{\ell HGW}_{g, k, B} = \int^{vir}_{\overline{\cN}_{g, k} (X, P, B; \{(g_i)\}_{i=1, \cdots, k})}
 \partial_\infty^* (\pi_1^* \alpha_1 \wedge  \cdots  \pi_k^*\wedge \alpha_k )
 \]
 for any  $k$-tuple of cohomology classes  
\[
(\alpha_1, \cdots, \alpha_k) \in H^*(\cX_0^{(g_1)}, \R) \times \cdots \times H^*(\cX_0^{(g_k)}, \R).
\]
Here $\pi_i:  \Cr^k \to  \Cr=I\cX_0$ denotes the projection to the $i$-the component. We emphasize that this is an invariants on $H^\ast_{CR}(\mc X_0)$ rather than 
on $H^\ast_G(X)$. It is different from usual HGW invariants. We call the invariant $L^2$-Hamiltonian GW invariants (abbreviated as $\ell$HGW).
In particular, when $(g, k) = (0, \geq 3)$, the above invariant can be assembled   to get a new (big) quantum product $*^{HR}$ 
on $ H^*(I\cX_0, \R)$.   Here HR stands for Hamiltonian reduction. In a separate paper(\cite{CW3}), we will introduce   an augmented 
symplectic vortex equation to  define an equivariant version of this invariant on $H^*_G(X)$ when $G$ is abelian. This enables us to define a quantum product $\ast_G$ on $H^\ast_G(X)$. We 
 investigate its relation
to $*^{HR}$, in particular, we combine symplectic vortex equation with the augmented one to define the quantum Kirwan map $Q_\kappa$ and show that $Q_\kappa$ is a ring morphism with respect to $\ast_G$ and $\ast^{HR}$.

   \vskip .2in

\noindent  
{\bf Acknowledgments}  This work is partially supported by   the Australian Research Council Grant
(DP130100159) and  the National Natural Science Foundation of China Grant.  The first author would also like to thank the hospitality of BICMR and ANU. We acknowledge that  the compactness theorem  in this manuscript, mainly   overlaps with the result  an earlier paper  \cite{SV}  by Venugopalan. We only noticed her paper  recently while we are  preparing the sequels  \cite{CW2} and \cite{CW3}. 
The approaches we employed  here is very different from those  in \cite{SV}.

\end{document}